\newtheorem*{rep@theorem}{\rep@title}
\newcommand{\newreptheorem}[2]{%
\newenvironment{rep#1}[1]{%
 \def\rep@title{#2 \ref{##1}}%
 \begin{rep@theorem}}%
 {\end{rep@theorem}}}
\newtheorem{lemma}{Lemma}
\newtheorem{example}{Example}
\newtheorem{theorem}{Theorem}
\newtheorem{definition}{Definition}
\theoremstyle{definition}
\newtheorem{remark}{Remark}
\newtheorem{assumption}{Assumption \!\!}
\DeclareMathOperator{\dist}{dist}
\DeclareMathOperator{\diag}{diag}
\DeclareMathOperator{\dom}{dom}
\DeclareMathOperator{\prox}{prox}
\newcommand{\E}[1]{\mathbb{E}\left[#1\right]}
\newcommand{\Ec}[2]{\mathbb{E}_{#1}\left[{#2}\right]}
\icmltitlerunning{Random extrapolation for primal-dual coordinate descent}
\begin{document}

\twocolumn[
\icmltitle{Random extrapolation for primal-dual coordinate descent}

\begin{icmlauthorlist}
\icmlauthor{Ahmet Alacaoglu}{to}
\icmlauthor{Olivier Fercoq}{goo}
\icmlauthor{Volkan Cevher}{to}
\end{icmlauthorlist}

\icmlaffiliation{to}{EPFL, Switzerland}
\icmlaffiliation{goo}{LTCI, T\'el\'ecom Paris, Institut Polytechnique de Paris, France}
\icmlcorrespondingauthor{Ahmet Alacaoglu}{ahmet.alacaoglu@epfl.ch}

\vskip 0.3in
]

\printAffiliationsAndNotice{} 
\begin{abstract}
We introduce a randomly extrapolated primal-dual coordinate descent method that adapts to sparsity of the data matrix and the favorable structures of the objective function. Our method updates only a subset of primal and dual variables with sparse data, and it uses large step sizes with dense data, retaining the benefits of the specific methods designed for each case. 
In addition to adapting to sparsity, our method attains fast convergence guarantees in favorable cases \textit{without any modifications}. 
In particular, we prove linear convergence under metric subregularity, which applies to strongly convex-strongly concave problems and piecewise linear quadratic functions. 
We show almost sure convergence of the sequence and optimal sublinear convergence rates for the primal-dual gap and objective values, in the general convex-concave case. Numerical evidence demonstrates the state-of-the-art empirical performance of our method in sparse and dense settings, matching and improving the existing methods.
\end{abstract}

\section{Introduction}
In this paper, we consider the problem
\begin{equation}\label{eq: prob_temp}
\min_{x\in\mathcal{X}} f(x) + g(x) + h(Ax),
\end{equation}
where $f, g\colon \mathcal{X} \to \mathbb{R}\cup \{+\infty\}$ and $h\colon \mathcal{Y} \to \mathbb{R}\cup\{+\infty\}$ are proper, lower semicontinuous, convex functions, $A\colon \mathcal{X} \to \mathcal{Y}$ is a linear operator.
$\mathcal{X}$ and $\mathcal{Y}$ are Euclidean spaces such that $\mathcal{X} = \prod_{i=1}^n \mathcal{X}_i$, and $\mathcal{Y}=\prod_{j=1}^m \mathcal{Y}_j$.
Moreover, $f$ is assumed to have coordinatewise Lipschitz continuous gradients and $g, h$ admit easily computable proximal operators.

%%%%%%%%%%%%%%%%%%%%%%%%%%%%%%%%%%%%%%%%%%%%%%%%%%%%%%%%%%%%%%%%%%%%%%%%%%%%%%%% tables

\begin{table*}[t]
  \centering
  \begin{tabular}{| l | l | c | c | c | c |}
 \hline
      									& \shortstack{Step sizes\\ with dense data} 				& \shortstack{per iteration \\ cost} 			& \shortstack{block-wise \\ Lipschitz}  &\shortstack{probability \\ law} & \shortstack{Efficient \\ implementation} \\\hline
    \cite{chambolle2018stochastic} 			& \cellcolor{lightgray}{$n\tau_i \sigma \| A_i \|^2 < 1$} 		& $m$ 								& N/A 						& \cellcolor{lightgray}{arbitrary}	& \cellcolor{lightgray}{direct$^{\dagger}$} \\ \hline
        \cite{fercoq2019coordinate} 			& \centering{$n^2\tau_i\sigma\|A_i\|^2 < 1$} 						& \cellcolor{lightgray}{$\vert J(i) \vert^{*}$} 		& \cellcolor{lightgray}{Yes} & uniform & \cellcolor{lightgray}{\shortstack{direct or duplication}}  \\ \hline
                \cite{latafat2019new} 				& $n^2\tau_i\sigma\|A_i\|^2 < 1$ 						& \cellcolor{lightgray}{$\vert J(i) \vert^{*}$} 		& No 				& \cellcolor{lightgray}{arbitrary} & duplication \\ \hline
    PURE-CD 							& \cellcolor{lightgray}{$n\tau_i\sigma\|A_i\|^2 < 1$} 			& \cellcolor{lightgray}{$\vert J(i) \vert^{*}$} 		& \cellcolor{lightgray}{Yes} & \cellcolor{lightgray}{arbitrary} & \cellcolor{lightgray}{direct} \\
    \hline
  \end{tabular}
  \caption{\small{Comparison of primal-dual coordinate descent methods. Note that we only compare here the most related methods to ours and include a comprehensive review of other existing methods with comparison to PURE-CD in Section~\ref{sec: lit}. 
  In the last column, we refer to the way one needs to implement the algorithm, for it to be efficient in both sparse and dense settings.
  $^{*}J(i)$ is defined in~\eqref{eq: def_ji}. $^{\dagger}$SPDHG only has implementation for dense setting and not for sparse. The concept of duplication for PDCD is described in \cite{fercoq2019coordinate}.}}
  \label{tab:1}
\end{table*}

Problem~\eqref{eq: prob_temp} is a general template that covers many problems in different fields, such as regularized empirical risk minimization~\cite{shalev2013stochastic,zhang2017stochastic}, optimization with large number of constraints~\cite{patrascu2017nonasymptotic,fercoq2019almost}, and total variation (TV) regularized problems~\cite{chambolle2018stochastic,fercoq2019coordinate}.

The classic choice for solving problem~\eqref{eq: prob_temp} is to use primal-dual methods~\cite{chambolle2011first,vu2013splitting,condat2013primal}.
These methods utilize the proximal operators for $g, h^\ast$ and gradient of the differentiable component $f$.
Randomized versions that we refer to as primal dual coordinate descent (PDCD), are proposed in several works~\cite{zhang2017stochastic,dang2014randomized,gao2019randomized,fercoq2019coordinate,chambolle2018stochastic,latafat2019new}.

First advantage of coordinate-based methods is that they access to blocks of $A$ and update a subset of variables, resulting in cheap per iteration costs.
Moreover, they utilize larger step sizes depending on the properties of the problem in selected blocks.

Existing PDCD methods fail to retain both these advantages, as sparsity of $A$ varies.
In particular, methods that have cheap per-iteration costs with sparse $A$~\cite{fercoq2019coordinate,latafat2019new}, are restricted to use small step sizes with dense $A$.
On the other hand, methods that can use large step sizes with dense $A$~\cite{chambolle2018stochastic}, have high per-iteration costs with sparse $A$.

\textbf{Contributions.} In this paper, we identify random extrapolation as the key to design a method that combines the benefits of the methods in two camps and propose the primal-dual method with random extrapolation and coordinate descent (PURE-CD).
PURE-CD exhibits the advantages of~\cite{fercoq2019coordinate,latafat2019new} in the sparse setting and the advantages of~\cite{chambolle2018stochastic} in the dense setting simultaneously, achieving the best of both worlds.
As PURE-CD has the favorable properties in both ends of the spectrum, it has the best performance in the regime in between: moderately sparse data.

In addition to adapting to the sparsity of $A$, we prove that PURE-CD also adapts to unknown structures in the problem, and obtains linear rate of convergence, without any modifications in the step sizes.
Our linear convergence results apply to strongly convex-strongly concave problems, linear programs, and problems with piecewise linear quadratic functions, involving Lasso, support vector machines and linearly constrained problems with piecewise linear-quadratic objectives.
In the general convex case, we prove that the iterates of PURE-CD converges almost surely to a solution of problem~\eqref{eq: prob_temp}.
Moreover, we show that in this case, the ergodic sequence obtains the optimal $\mathcal{O}(1/k)$ sublinear rate of convergence.

\section{Preliminaries}
\subsection{Notation}
For a positive definite matrix $V$, we denote $\| x \|_V^2 = \langle x, Vx \rangle$.
We define the distance of a point $x$ from a set $\mathcal{X}$ as $\dist(x, \mathcal{X}) = \min_{u\in\mathcal{X}} \| u - x \|^2$.
Given an index $i\in\{1,\ldots, n\}$, the corresponding coordinate of the gradient vector is $\nabla _i f(x)$.
Graph of mapping $F$ is denoted by $\mathrm{gra~} F$.
Recall that $\mathcal{X} = \prod_{i=1}^n \mathcal{X}_i$, and $\mathcal{Y}=\prod_{j=1}^m \mathcal{Y}_j$ and $\mathcal{Z}=\mathcal{X}\times\mathcal{Y}$.
For $u\in\mathcal{X}_i$, $U_i(u)\in\mathcal{X}$ is such that each element of $U_i(u)$ is $0$, except the block $i$ which contains $u$.
We denote the indicator function of a set $\mathcal{X}$ as $\delta_\mathcal{X}$.

Proximal operator with a positive definite $V$ is defined as
\begin{equation}
\prox_{V, g}(x) = \arg\min_{u} g(u) + \frac{1}{2} \| u - x \|^2_{V^{-1}}.\notag
\end{equation}
We will need the following notation for the sparse setting,
\begin{equation}\label{eq: def_ji}
\begin{aligned}
J(i) = \{ j\in\{1, \dots, m\}\colon A_{j, i} \neq 0 \} \\
I(j) = \{ i\in\{1, \dots, n\}\colon A_{j, i} \neq 0 \}.
\end{aligned}
\end{equation}
Given a matrix $A$ and $i \in\{1,\ldots,n\}$, $J(i)$ denotes the row indices that correspond to nonzero values in the column indexed by $i$.
Similarly, with $j\in\{1,\ldots,m\}$, $I(j)$ gives the column indices corresponding to nonzero values in the row indexed by $j$.

Moreover, given positive probabilities $(p_i)_{1 \leq i \leq n}$, we define
\begin{equation}
\pi_j = \sum_{i\in I(j)} p_i.
\end{equation}
In the simple case of $p_i = 1/n$, it is easy to see that $n \pi_j$ corresponds to number of nonzeros in the row indexed by $j$.

At iteration $k$, the algorithm randomly picks an index $i_{k+1}\in\{1,\ldots, n\}$. 
To govern the selection rule, we define the probability matrix $P=\diag(p_1, \dots, p_n)$, where $p_i = \mathbb{P}(i_{k+1} = i)$, and $\underline p = \min_i p_i$. 
We define as $\mathcal{F}_k$ the filtration generated by the random indices $\{i_1, \dots, i_k\}$.

Denoting $z=(x,y)$, we define the functions
\begin{align}
&D_p(x_{k+1}; z) = f(x_{k+1}) + g(x_{k+1}) - f(x) - g(x) \notag\\
&\qquad\qquad\qquad\qquad\qquad\qquad+ \langle A^\top y, x_{k+1} - x \rangle,\notag\\
&D_d(\bar{y}_{k+1}; z) = h^\ast(\bar{y}_{k+1}) - h^\ast(y) - \langle Ax, \bar{y}_{k+1} - y \rangle.\notag
\end{align}

\subsection{Optimality}
Problem~\eqref{eq: prob_temp} has the following saddle point formulation
\begin{align}
\min_{x\in\mathcal{X}} \max_{y\in\mathcal{Y}} f(x) + g(x) + \langle Ax, y \rangle - h^\ast(y).\notag
\end{align}
Karush-Kuhn-Tucker (KKT) conditions state that the vector $z_\star = (x_\star, y_\star)$ is a primal-dual solution of the problem when
\begin{align}\label{eq: kkt}
0 \in \begin{bmatrix} \nabla f(x_\star) + \partial g(x_\star) + A^\top y_\star \\
Ax_\star - \partial h^\ast(y_\star) 
\end{bmatrix} =: F(z_\star).
\end{align}
We call $\mathcal{Z}_\star$ the set of such solutions.

\subsection{Metric subregularity}\label{sec: ms}
We utilize the metric subregularity assumption for proving linear convergence.
This assumption has been used in primal-dual optimization literature for both deterministic~\cite{liang2016convergence} and randomized algorithms~\cite{latafat2019new,alacaoglu2019convergence}.
\begin{definition}\label{eq: def_ms}
A set valued mapping $F\colon \mathcal{X} \rightrightarrows \mathcal{Y}$ is metrically subregular at $\bar{x}$ for $\bar{y}$, with $(\bar{x}, \bar{y}) \in \mathrm{gra~} F$, if there exists $\eta > 0$ with a neighborhood of regularity $\mathcal{N}(\bar{x})$ such that
\begin{equation}
\dist(x, F^{-1} \bar{y}) \leq \eta \dist(\bar{y}, Fx),~~~~\forall x \in \mathcal{N}(\bar{x}).\notag
\end{equation}
\end{definition}
We will be interested in the metric subregularity of KKT operator $F$, given in~\eqref{eq: kkt}, for $0$. Intuitively speaking, as $0 \in F(z_\star),\forall z^\star \in \mathcal{Z}_\star$, metric subregularity of $F$ for $0$ essentially gives us a way to characterize the behavior of the iterates around the solution set.

Even though Definition~\ref{eq: def_ms} looks daunting, fortunately, one does not need to check it for a given problem.
Metric subregularity is well-studied in the literature and it is known to be satisfied in the following cases:
\begin{example}\label{ex: ms_examples}~\\
$\triangleright$ If $f+g$ and $h^\ast$ are strongly convex, Definition~\ref{eq: def_ms} holds with $\mathcal{N}(\bar{x}) = \mathbb{R}^d$~\citep[Lemma IV.2]{latafat2019new}. \\
$\triangleright$ If $f, g, h$ are piecewise linear quadratic (PLQ) functions, Definition~\ref{eq: def_ms} holds with any bounded neighborhood $\mathcal{N}(\bar{x})$~\citep[Lemma IV.4]{latafat2019new}.
\end{example}
PLQ functions include $\ell_1$ norm, hinge loss, indicator of polyhedral sets. Therefore third bullet point apply to Lasso, support vector machines, elastic net, and linearly constrained problems with PLQ loss functions~\cite{latafat2019new}.

We now state our main assumptions which are standard in the literature~\cite{fercoq2019coordinate,chambolle2018stochastic,latafat2019new,bauschke2011convex}:
\begin{assumption}\label{asmp: asmp1}~\\
$\triangleright$ $f$, $g$ and $h$ are proper, lower semicontinuous, convex. \\[1mm]
$\triangleright$ $g$ is separable, i.e., $g(x) = \sum_{i=1}^n g_i(x_i)$, and $f$ has coordinatewise Lipschitz gradients such that $\forall x \in \mathcal{X}, \forall u \in \mathcal{X}_i$,
\begin{equation}
f(x + U_i(u)) \leq f(x) + \langle \nabla_i f(x), u \rangle + \frac{\beta_i}{2} \| u \|^2.
\end{equation}
$\triangleright$ Set of solutions to problem~\eqref{eq: prob_temp}, defined in~\eqref{eq: kkt} is nonempty. \\[1mm]
$\triangleright$ Slater's condition holds, which states that $0 \in \mathrm{ri}(\dom h - A\dom g)$ where $\mathrm{ri}$ denotes the relative interior.
\end{assumption}

\section{Algorithm}\label{sec: alg}
In this section, we will sketch the main ideas behind our algorithm.
Primal-dual method\footnote{This method is also known as V\~u-Condat algorithm.}, due to~\cite{chambolle2011first,condat2013primal,vu2013splitting} reads as
\begin{equation}\label{eq: vc_alg}
\begin{aligned}
&\bar{x}_{k+1} = \prox_{\tau, g} \left(\bar{x}_k - \tau\left( \nabla f(\bar x_k) + A^\top \bar{y}_k \right) \right)  \\
&\bar{y}_{k+1} = \prox_{\sigma, h^\ast} \left( \bar{y}_k + \sigma A (2\bar{x}_{k+1} - \bar{x}_k)\right).
\end{aligned}
\end{equation}
The main intuition behind PDCD methods proposed by~\cite{zhang2017stochastic,fercoq2019coordinate,chambolle2018stochastic} is to incorporate coordinate based updates.
Among these methods,~\cite{zhang2017stochastic} specializes in strongly convex-strongly concave problems, whereas the other other ones apply to more general classes of problems.

A closely related approach focused on the following interpretation of primal-dual method~\eqref{eq: vc_alg} which is named as TriPD in~\citep[Algorithm 1]{latafat2019new}
\begin{equation}\label{eq: tripd}
\begin{aligned}
&\bar{y}_{k+1} = \prox_{\sigma, h^\ast}\left( \hat{y}_k + \sigma A \bar{x}_k \right)  \\
&\bar{x}_{k+1} = \prox_{\tau, g}\left(\bar{x}_k - \tau\left( \nabla f(\bar{x}_k) + A^\top \bar{y}_{k+1} \right) \right)  \\
&\hat{y}_{k+1} = \bar{y}_{k+1} + \sigma A(\bar{x}_{k+1} - \bar{x}_k). 
\end{aligned}
\end{equation}
We notice that by moving the $\bar{y}_{k+1}$ update in TriPD to take place after $\hat{y}_{k+1}$ update, one obtains~\eqref{eq: vc_alg}.

As observed in~\cite{latafat2019new}, this particular interpretation of primal-dual method is useful for randomization.
TriPD-BC as proposed in~\cite{latafat2019new} iterates as
\begin{align}
&\bar{y}_{k+1} = \prox_{\sigma, h^\ast}\left( {y}_k + \sigma A {x}_k \right) \notag \\
&\bar{x}_{k+1} = \prox_{\tau, g}\left({x}_k - \tau\left( \nabla f(x_k) + A^\top \bar{y}_{k+1} \right) \right) \notag \\
&\hat{y}_{k+1} = \bar{y}_{k+1} + \sigma A(\bar{x}_{k+1} - x_k) \notag \\
&\text{Draw an index } i_{k+1}\in\{1,\dots,n\} \text{ randomly.} \notag \\
&x_{k+1}^{i_{k+1}} = \bar{x}_{k+1}^{i_{k+1}},~~~x_{k+1}^j = x_k^j, \forall j\neq i_{k+1}\notag\\
&y_{k+1}^j = \hat{y}_{k+1}^j, \forall j \in J(i_{k+1}),~~~y_{k+1}^j = y_k^j, \forall j\not\in J(i_{k+1}). \notag
\end{align}
One immediate limitation of TriPD-BC is that to update $y_{k+1}$, one needs to know $\bar{x}_{k+1}$, whereas only $\bar{x}_{k+1}^{i_{k+1}}$ is needed to update $x_{k+1}$.
As also discussed in~\cite{latafat2019new}, this scheme is suitable when $A$ has special structure such as sparsity.
When $A$ is dense, one needs to update all elements of $y_{k+1}$ and $\hat{y}_{k+1}$, in which case one needs to compute both $\bar{y}_{k+1}$ and $\bar{x}_{k+1}$ which has the same cost as a deterministic algorithm.

In the dense setting, for an efficient implementation, one can use duplication of dual variables as described in~\cite{fercoq2019coordinate}. 
However, in this case one is restricted to use small step sizes as discussed in~\cite{fercoq2019coordinate}. Compared to SPDHG in~\cite{chambolle2018stochastic}, the step sizes can be $n$ times worse, deteriorating the performance of the method considerably in the dense setting.

On the other hand, the drawback of~SPDHG is that it needs to update all dual variables at every iteration, whereas the methods in~\cite{fercoq2019coordinate,latafat2019new} update only a subset of dual variables depending on the sparsity of $A$. When the dual dimension is high, per iteration cost of~\cite{chambolle2018stochastic} becomes prohibitive.

Our idea, inspired by~\cite{chambolle2018stochastic}, to make TriPD-BC efficient for dense setting is to use $x_{k+1}$ rather than $\bar{x}_{k+1}$ in the update of $\hat{y}_{k+1}$.
Although simple to state, this modification makes $\hat{y}_{k+1}$ random, rendering the analysis of~\cite{latafat2019new} and other analyses based on monotone operator theory not applicable.

This leads to our algorithm, primal-dual method with random extrapolation and coordinate descent (PURE-CD). Our method uses large step sizes as in~\cite{chambolle2018stochastic} in the dense setting, while staying efficient in terms of per iteration costs in the sparse setting as in~\cite{fercoq2019coordinate,latafat2019new}; leading to the first general PDCD algorithm that obtains favorable properties in both sparse and dense settings.

\begin{algorithm}[h]
\caption{Primal-dual method with random extrapolation and coordinate descent (PURE-CD)}
\label{alg:stripd}
\begin{algorithmic}[1]
    \STATE { \textbf{Input:}} Diagonal matrices $\theta, \tau, \sigma > 0$, chosen according to~\eqref{eq: theta_choice},~\eqref{eq: ss_choice}.
    \FOR{$k = 0,1\ldots $} 
	\STATE $\bar{y}_{k+1} = \prox_{\sigma, h^\ast}\left(y_k + \sigma Ax_k\right)$
         \STATE $\bar{x}_{k+1} = \prox_{\tau, g}\left(x_k - \tau \left(\nabla f(x_k) + A^\top \bar{y}_{k+1}\right)\right)$
        \STATE Draw $i_{k+1} \in \{ 1, \dots, n \}$ with $\mathbb{P}(i_{k+1} = i) = p_i$
         \STATE $x_{k+1}^{i_{k+1}} = \bar{x}_{k+1}^{i_{k+1}}$ 
         \STATE $x_{k+1}^{j} = x_k^j, \forall j \neq i_{k+1}$
         \STATE $y_{k+1}^{j} = \bar{y}_{k+1}^j + \sigma_j \theta_j(A (x_{k+1} - x_k))_j, \forall j \in J(i_{k+1})$ $y_{k+1}^j = y_k^j, \forall j\not\in J(i_{k+1})$
	\ENDFOR
\end{algorithmic}	
\end{algorithm}

\section{Convergence Analysis}
In this section, we  analyze the behavior of Algorithm~\ref{alg:stripd} under various assumptions.
We first start with a lemma analyzing one iteration of the algorithm.
\vspace{0.05cm}
\begin{lemma}\label{lem: lem1}
Under Assumption~\ref{asmp: asmp1}, let $\theta = \diag(\theta_1, \ldots, \theta_m)$ and $\pi = \diag(\pi_1, \ldots, \pi_m)$ be chosen as
\begin{equation}
\theta_j = \frac{\pi_j}{\underline p}, \text{ where } \pi_j = \sum_{i \in I(j)} p_i, \text{ and } \underline p = \min_i p_i.\label{eq: theta_choice}
\end{equation}
We define the functions, given $z=(x, y)$,
\begin{align}
&V(z) = \frac{\underline p}{2} \| x \|^2_{\tau^{-1}P^{-1}} + \frac{\underline p}{2} \| y \|^2_{\sigma^{-1}\pi^{-1}},\notag\\
&\tilde{V}(z) = \frac{\underline p}{2} \| x \|^2_{C(\tau)} + \frac{\underline p}{2} \| y \|^2_{\sigma^{-1}},\notag
\end{align}
where 
$
C(\tau)_i = \frac{2p_i}{\underline p \tau_i} - \frac{1}{\tau_i} - p_i \sum_{j=1}^m \pi_j^{-1}\sigma_j \theta_j^2 A_{j, i}^2 - \frac{\beta_i p_i}{\underline p}$.

Then, for the iterates of Algorithm~\ref{alg:stripd}, $\forall z=(x, y) \in \mathcal{Z}$, it holds that:
\begin{multline}
\mathbb{E}_k \left[ D_p(x_{k+1}; z) \right] +\underline p D_d(\bar{y}_{k+1}; z) + \mathbb{E}_k \left[ V(z_{k+1} - z) \right] \\ \leq (1-\underline p)D_p (x_k; z) + V(z_k - z) - \tilde{V}(\bar{z}_{k+1} - z_k).\notag
\end{multline}
\end{lemma}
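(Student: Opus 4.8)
The plan is to prove the claim as a one-iteration energy estimate: combine the optimality (prox) inequalities of the two deterministic updates $\bar{x}_{k+1},\bar{y}_{k+1}$ with the blockwise descent lemma for $f$, and then take the conditional expectation $\mathbb{E}_k$ over the random index $i_{k+1}$. Throughout I write $d=\bar{x}_{k+1}-x_k$ and $\delta=\bar{y}_{k+1}-y_k$, so $\bar{z}_{k+1}-z_k=(d,\delta)$, and I use $P=\diag(p_1,\dots,p_n)$ together with the fact that $x_{k+1}$ differs from $x_k$ only in block $i_{k+1}$. First I would record three building blocks. (a) From $\bar{y}_{k+1}=\prox_{\sigma,h^\ast}(y_k+\sigma Ax_k)$, optimality gives $\sigma^{-1}(y_k+\sigma Ax_k-\bar{y}_{k+1})\in\partial h^\ast(\bar{y}_{k+1})$, and convexity of $h^\ast$ with the three-point identity for $\|\cdot\|_{\sigma^{-1}}$ yields $D_d(\bar{y}_{k+1};z)\le\tfrac12\|y_k-y\|_{\sigma^{-1}}^2-\tfrac12\|\bar{y}_{k+1}-y\|_{\sigma^{-1}}^2-\tfrac12\|\delta\|_{\sigma^{-1}}^2+\langle A(x_k-x),\bar{y}_{k+1}-y\rangle$. (b) Since $g$ is separable and $\tau$ diagonal, the primal prox splits blockwise, giving $s_i=-\tau_i^{-1}d_i-\nabla_i f(x_k)-(A^\top\bar{y}_{k+1})_i\in\partial g_i(\bar{x}_{k+1}^i)$; I keep \emph{both} comparison inequalities, $g_i(\bar{x}_{k+1}^i)-g_i(x^i)\le\langle s_i,\bar{x}_{k+1}^i-x^i\rangle$ and $g_i(\bar{x}_{k+1}^i)-g_i(x_k^i)\le\langle s_i,d_i\rangle$. (c) The coordinatewise Lipschitz assumption on the single accepted coordinate gives $f(x_{k+1})\le f(x_k)+\langle\nabla_{i_{k+1}}f(x_k),d_{i_{k+1}}\rangle+\tfrac{\beta_{i_{k+1}}}{2}\|d_{i_{k+1}}\|^2$.

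Next I would take $\mathbb{E}_k$. Using $\mathbb{P}(i_{k+1}=i)=p_i$ one gets $\mathbb{E}_k[f(x_{k+1})]\le f(x_k)+\langle\nabla f(x_k),Pd\rangle+\tfrac12\sum_i p_i\beta_i\|d_i\|^2$, $\mathbb{E}_k[g(x_{k+1})]=g(x_k)+\sum_i p_i[g_i(\bar{x}_{k+1}^i)-g_i(x_k^i)]$, and $\mathbb{E}_k[\langle A^\top y,x_{k+1}-x\rangle]=\langle A^\top y,Pd\rangle+\langle A^\top y,x_k-x\rangle$. The delicate computation is $\mathbb{E}_k[V(z_{k+1}-z)]$: the primal norm averages blockwise to $\mathbb{E}_k[\|x_{k+1}-x\|_{\tau^{-1}P^{-1}}^2]=\|x_k-x\|_{\tau^{-1}P^{-1}}^2+\|\bar{x}_{k+1}-x\|_{\tau^{-1}}^2-\|x_k-x\|_{\tau^{-1}}^2$; for the dual norm, expanding $y_{k+1}^j=\bar{y}_{k+1}^j+\sigma_j\theta_j(A(x_{k+1}-x_k))_j$ on the event $j\in J(i_{k+1})$ (probability $\pi_j$) produces a $\|\bar{y}_{k+1}-y\|_{\sigma^{-1}}^2$ term, a cross term $\tfrac{2\theta_j}{\pi_j}\langle\bar{y}_{k+1}^j-y^j,(APd)_j\rangle$, and a quadratic $\tfrac{\sigma_j\theta_j^2}{\pi_j}\sum_{i\in I(j)}p_iA_{j,i}^2\|d_i\|^2$. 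The choice $\theta_j=\pi_j/\underline p$ collapses the cross term to $\tfrac{2}{\underline p}\langle\bar{y}_{k+1}-y,APd\rangle$, so after multiplying by $\tfrac{\underline p}{2}$ and subtracting $V(z_k-z)$ the $P^{-1}$- and $\pi^{-1}$-weighted pieces cancel, leaving the $\sigma^{-1}$/$\tau^{-1}$ norms, the bilinear $\langle\bar{y}_{k+1}-y,APd\rangle$, and $\tfrac{\underline p}{2}\sum_i p_i\|d_i\|^2\sum_j\pi_j^{-1}\sigma_j\theta_j^2A_{j,i}^2$.

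Then I would assemble $\mathbb{E}_k[D_p(x_{k+1};z)]+\underline p D_d(\bar{y}_{k+1};z)+\mathbb{E}_k[V(z_{k+1}-z)]-V(z_k-z)-(1-\underline p)D_p(x_k;z)$ and show it equals $-\tilde{V}(\bar{z}_{k+1}-z_k)$. I expect two clean cancellations. After using $f(x_k)-f(x)\le\langle\nabla f(x_k),x_k-x\rangle$ with weight $\underline p$, every term containing $\nabla f(x_k)$ sums to zero, since its coefficient reduces to $\underline p(x_k-\bar{x}_{k+1})+\underline p\,d=0$; similarly every bilinear term in $A$ (those carrying $y$ and those carrying $\bar{y}_{k+1}$) cancels by the same identity, which is exactly why $\theta$ had to be tuned as above. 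What remains are the $\sigma^{-1}$ dual norms, which combine to $-\tfrac{\underline p}{2}\|\delta\|_{\sigma^{-1}}^2$, and quadratics in $d$; collecting the $\|d_i\|^2$ coefficients and comparing with $C(\tau)_i$ shows the residual is precisely $-\tfrac{\underline p}{2}\|d\|_{C(\tau)}^2$, so the matching is an exact identity.

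The main obstacle is the non-uniform sampling. The naive route—bounding $\mathbb{E}_k[g(x_{k+1})]$ only against the comparison point $x$—leaves the sign-indefinite remainder $\sum_i(\underline p-p_i)[g_i(x_k^i)-g_i(x^i)]$, which cannot be controlled. The resolution, and the crux of the argument, is to regroup $\sum_i p_i[g_i(\bar{x}_{k+1}^i)-g_i(x_k^i)]+\underline p[g(x_k)-g(x)]=\underline p\sum_i[g_i(\bar{x}_{k+1}^i)-g_i(x^i)]+\sum_i(p_i-\underline p)[g_i(\bar{x}_{k+1}^i)-g_i(x_k^i)]$ and apply the two prox comparison inequalities of step (b) with the now-nonnegative weights $\underline p$ and $p_i-\underline p$. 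Once this is done, the only inequalities used are the descent lemma, the dual and primal prox inequalities, and convexity of $f$, while the final coefficient computation is exact.
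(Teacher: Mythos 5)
Your proposal is correct and follows essentially the same route as the paper's proof: prox inequalities for $g$ and $h^\ast$, the blockwise descent lemma applied in conditional expectation, the expectation identities for $x_{k+1}$ and $y_{k+1}$ (the paper's Lemma~\ref{lem: y_lem}), the choice $\theta_j = \pi_j/\underline{p}$ to make every bilinear term cancel, and an exact matching of the residual quadratic coefficients with $C(\tau)_i$. Your resolution of the non-uniform sampling issue --- regrouping $\sum_i p_i[g_i(\bar{x}_{k+1}^i)-g_i(x_k^i)]+\underline{p}[g(x_k)-g(x)]$ and applying the two prox comparison inequalities with nonnegative weights $\underline{p}$ and $p_i-\underline{p}$ --- is algebraically identical to the paper's device of evaluating the prox inequality at the auxiliary point $x' = x_k + P^{-1}\underline{p}\,(x-x_k)$ and then using convexity of $g_i$ (its Lemma~\ref{lem: x_lem}), so the two arguments coincide term by term.
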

The main technical challenge in the proof of the lemma, compared to the corresponding results in~\cite{latafat2019new} and~\cite{chambolle2018stochastic} is handling stochasticity in both variables $x_{k+1}, y_{k+1}$ (and also $\hat{y}_{k+1}$ for~\cite{latafat2019new}).
Using coordinatewise Lipschitz constants of $f$ with arbitrary sampling also requires an intricate analysis.

The result of Lemma~\ref{lem: lem1} is promising for deriving convergence results for Algorithm~\ref{alg:stripd}. 
As $D_p(x_{k+1}; z_\star) \geq 0$, $D_d(\bar{y}_{k+1}; z_\star)\geq0$ and when step sizes are chosen such that $\tilde{V}$ is nonnegative, Lemma~\ref{lem: lem1} describes a stochastic monotonicity property.
In particular, it shows that $D_p(x_{k+1}; z_\star) + V(z_{k+1} - z_\star)$ which measures the distance to solution in a Bregman distance sense, is monotonically nonincreasing in expectation.

\subsection{Almost sure convergence}\label{sec: as}
Almost sure convergence is a fundamental property for randomized methods describing the limiting behavior of the iterates in different realization of the algorithm.
The following theorem states that the iterates of Algorithm~\ref{alg:stripd} converge almost surely to a point in the solution set.
\begin{theorem}\label{thm: thm1}
Let Assumption~\ref{asmp: asmp1} hold, $\theta$ and $\pi$ be as in~\Cref{lem: lem1}, and the step sizes $\tau, \sigma$ satisfy
\begin{equation}
\tau_i < \frac{2p_i - \underline{p}}{\beta_i p_i + \underline p^{-1}p_i\sum_{j=1}^m \pi_j \sigma_j A_{j, i}^2}.\label{eq: ss_choice}
\end{equation}
The iterates $z_k$ are produced by~\Cref{alg:stripd}. 
Then, almost surely, there exist $z_\star = (x_\star, y_\star) \in \mathcal{Z}_\star$ such that $z_k \to z_\star$.
\end{theorem}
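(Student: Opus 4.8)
The plan is to instantiate Lemma~\ref{lem: lem1} at a fixed solution $z = z_\star \in \mathcal{Z}_\star$, turn the resulting one-step inequality into a supermartingale, and then upgrade convergence of the Lyapunov function to convergence of the sequence via a stochastic Opial argument. First I would record three nonnegativity facts. Since $z_\star$ satisfies the KKT inclusion~\eqref{eq: kkt}, we have $-\nabla f(x_\star) - A^\top y_\star \in \partial g(x_\star)$ and $A x_\star \in \partial h^\ast(y_\star)$, so convexity yields $D_p(x_k; z_\star) \geq 0$ and $D_d(\bar y_{k+1}; z_\star) \geq 0$ for all $k$. Moreover, substituting $\theta_j = \pi_j/\underline p$ into $C(\tau)_i$ and factoring out $\tau_i^{-1}$ shows that the step-size condition~\eqref{eq: ss_choice} is exactly equivalent to $C(\tau)_i > 0$ for every $i$; hence $\tilde V$ is positive definite and $\tilde V(\bar z_{k+1} - z_k) \geq 0$.

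Next I would define the nonnegative process $\Phi_k = D_p(x_k; z_\star) + V(z_k - z_\star)$. Using $(1-\underline p)D_p(x_k; z_\star) + V(z_k - z_\star) = \Phi_k - \underline p D_p(x_k; z_\star)$ in Lemma~\ref{lem: lem1} gives
\[
\mathbb{E}_k[\Phi_{k+1}] \le \Phi_k - \underline p D_p(x_k; z_\star) - \underline p D_d(\bar y_{k+1}; z_\star) - \tilde V(\bar z_{k+1} - z_k),
\]
and since the three subtracted terms are nonnegative, $(\Phi_k)$ is a nonnegative supermartingale with respect to $(\mathcal{F}_k)$. The Robbins--Siegmund theorem then yields, almost surely, that $\Phi_k$ converges to a finite limit and that $\sum_k [\underline p D_p(x_k; z_\star) + \underline p D_d(\bar y_{k+1}; z_\star) + \tilde V(\bar z_{k+1} - z_k)] < \infty$. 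In particular $D_p(x_k; z_\star) \to 0$, so $V(z_k - z_\star)$ converges a.s.\ (the Fej\'er ingredient); it also gives $\tilde V(\bar z_{k+1} - z_k) \to 0$, whence $\bar x_{k+1} - x_k \to 0$ and $\bar y_{k+1} - y_k \to 0$ a.s.\ (using $\tilde V \succ 0$), as well as boundedness of $(z_k)$.

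The second ingredient is that every cluster point of $(z_k)$ lies in $\mathcal{Z}_\star$ almost surely. For this I would write out the optimality conditions of the two prox steps of Algorithm~\ref{alg:stripd}, namely $\sigma^{-1}(y_k - \bar y_{k+1}) + A x_k \in \partial h^\ast(\bar y_{k+1})$ and $\tau^{-1}(x_k - \bar x_{k+1}) - \nabla f(x_k) - A^\top \bar y_{k+1} \in \partial g(\bar x_{k+1})$. Along a subsequence with $z_k \to z_\infty = (x_\infty, y_\infty)$, the relations $\bar x_{k+1} - x_k \to 0$ and $\bar y_{k+1} - y_k \to 0$ force $\bar x_{k+1}, \bar y_{k+1} \to z_\infty$ and annihilate the residual terms $\sigma^{-1}(y_k - \bar y_{k+1})$ and $\tau^{-1}(x_k - \bar x_{k+1})$; passing to the limit using outer semicontinuity of the maximally monotone operators $\partial h^\ast$, $\partial g$ and continuity of $\nabla f$ recovers $A x_\infty \in \partial h^\ast(y_\infty)$ and $-\nabla f(x_\infty) - A^\top y_\infty \in \partial g(x_\infty)$, i.e.\ $0 \in F(z_\infty)$.

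Finally I would combine the two ingredients through a stochastic Opial (quasi-Fej\'er) lemma. The delicate point, and the step I expect to be the main obstacle, is that the convergence of $V(z_k - z_\star)$ holds a.s.\ on an event depending on $z_\star$, whereas Opial's argument needs it simultaneously for all $z_\star \in \mathcal{Z}_\star$. I would resolve this by fixing a countable dense subset of the closed set $\mathcal{Z}_\star$, intersecting the countably many full-measure events, and extending convergence to all of $\mathcal{Z}_\star$ by continuity of $V(\cdot)$ together with boundedness of $(z_k)$; on this single full-measure event, $V(z_k - z_\star)$ converges for every $z_\star$ and every cluster point is a solution, so Opial's lemma gives $z_k \to z_\star$ for some $\mathcal{Z}_\star$-valued random variable.
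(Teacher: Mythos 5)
Your proposal is correct and follows essentially the same route as the paper's proof: instantiate Lemma~\ref{lem: lem1} at a solution, apply Robbins--Siegmund to get a.s.\ convergence of the Lyapunov function and summability of the residual terms, show every cluster point is a solution, and conclude via the stochastic Opial/quasi-Fej\'er argument with a countable dense subset of $\mathcal{Z}_\star$ to handle the dependence of the null set on $z_\star$. The only cosmetic differences are that the paper identifies cluster points through continuity of the fixed-point operator $T$ (with $\bar{z}_{k+1} = T(z_k)$) rather than graph-closedness of $\partial g$ and $\partial h^\ast$, and it derives summability of $\tilde{V}(\bar{z}_{k+1}-z_k)$ by a separate full-expectation/Fubini--Tonelli step instead of keeping that term inside the Robbins--Siegmund application as you do.
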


We analyze the step size rule~\eqref{eq: ss_choice} in Theorem~\ref{thm: thm1} and compare with existing efficient methods in dense and sparse settings.
\begin{remark}\label{rm: rm1}
~\\
$\triangleright$ Let $A$ be dense, with all its elements being nonzero, $p_i = 1/n$ and $f(\cdot)=0$, then the step size rule reduces to
\begin{align}
\tau_i < \frac{1}{n \sigma \| A_i \|^2},\notag
\end{align}
which is the step size rule of SPDHG~\cite{chambolle2018stochastic,alacaoglu2019convergence}, which is shown to be favorable in the dense setting. In contrast, step size rules of~\cite{fercoq2019coordinate,latafat2019new} are $n$ times worse due to duplication, in this case.\\[2mm]
$\triangleright$ Let $A$ be such that it contains one nonzero element per row, and we use $p_i = \frac{1}{n}$, which results in $\pi_j = \frac{1}{n}$. Then,
\begin{align}
\tau_i < \frac{1}{\beta_i + \sum_{j=1}^m \sigma_j A_{j, i}^2},\notag
\end{align}
which is the step size rule of Vu-Condat-CD~\cite{fercoq2019coordinate}, upon using the definition of $J(i)$ from~\eqref{eq: def_ji}.
Similarly, Algorithm~\ref{alg:stripd} updates $1$ dual coordinate and $1$ primal coordinate, in this case. In contrast, SPDHG~\cite{chambolle2018stochastic} updates $m$ dual coordinates, resulting in $m$ times higher per iteration cost.
\end{remark}

We note that the step size of TriPD-BC~\cite{latafat2019new} depends on global Lipschitz constant of $f$ rather than coordinatewise Lipschitz constants.
In fact, using coordinatewise Lipschitz constants is very important one of the most important reasons of the success of coordinate descent methods, as it results in larger step sizes~\cite{nesterov2012efficiency,richtarik2014iteration,fercoq2015accelerated}.

The takeaway from Remark~\ref{rm: rm1} is that Algorithm~\ref{alg:stripd} recovers the characteristics of the best performing methods in fully dense and fully sparse settings.
Moreover, as it is the only method that has the desirable dependencies in both cases, it has the best properties in the moderate sparse cases.
We also validate this observation in numerical experiments. 

\subsection{Linear convergence}
Linear convergence of primal-dual methods in practice is a widely observed phenomenon~\cite{chambolle2011first,liang2016convergence}.
We show that Algorithm~\ref{alg:stripd} also shares this property and obtains linear convergence under metric subregularity, without any modification on the algorithm.

We define the Bregman-type projection onto the solution set 
\begin{equation}\label{eq: def_zkstar}
z_k^\star = \arg\min_{u\in\mathcal{Z}_\star} D_p(x_k; u) + V(z_k - u).
\end{equation}
We now show that $z_k^\star$ is well-defined under our assumptions.
First, the solution set is convex and closed. 
Second, $D_p(x_{k}; u) \geq 0$ for all $u\in\mathcal{Z}_\star$ and it is also lower semicontinuous. 
Third, we remark that $V(z_k-u)$ is a squared norm (see Lemma~\ref{lem: lem1}), thus coercive, therefore the sum is coercive and lower semicontinuous over $\mathcal{Z}_\star$.
Hence, $z_k^\star$ exists.

The definition of $z_k^\star$ in~\eqref{eq: def_zkstar} is more involved compared to the corresponding quantity in~\cite{latafat2019new}.
This is in fact due to us using coordinatewise Lipschitz constants in our step sizes, rather than the global Lipschitz constant in~\cite{latafat2019new}.
\begin{assumption}\label{asmp: asmp2}~\\
KKT operator $F$ is metrically subregular at all $z_\star \in \mathcal{Z}_\star$ for $0$, and $\bar{z}_k\in\mathcal{N}(z_\star), \forall z_\star, \forall k$.
\end{assumption}

\begin{theorem}\label{thm: thm2}
Let~\Cref{asmp: asmp1} and~\ref{asmp: asmp2} hold.
Let $\theta$ and the step sizes $\tau, \sigma$ be chosen according to~\eqref{eq: theta_choice} and~\eqref{eq: ss_choice}.
Moreover, $z_k^\star = (x_k^\star, y_k^\star)$ is as defined in~\eqref{eq: def_zkstar}.
Then, for $z_k$ generated by Algorithm~\ref{alg:stripd}, it follows that
\begin{multline}
\mathbb{E} \left[\frac{\underline p}{2} \| x_k - x_k^\star \|^2_{\tau^{-1} P^{-1}} + \frac{\underline p}{2} \| y_k - y_k^\star \|^2_{\sigma^{-1}\pi^{-1}} \right] \\ 
\leq (1-\rho)^k \Delta_0,\notag
\end{multline}
where $\rho = \min\left( \underline p, \frac{C_{2, \tilde V}}{C_{V, 2} ((2+2c)+(1+c)(\eta \| H-M\|+\beta))^2} \right)$, $\Delta_0 = D_p(x_0; z_0^\star) + V(z_0 - z_0^\star)$, $\bar \beta$ is the global Lipschitz constant of $f$, \\$C_{2, \tilde V} = \frac{\underline{p}}{2} \min \left\{ \min_i C(\tau)_i, \min_j \sigma_j^{-1} \right\}$, \\
$C_{V, 2}=\frac{1}{2} \max\left\{ \max_i \frac{1}{\tau_i}, \max_j \frac{1}{\sigma_j} \right\}$, \\
$C_{2,V} = \sqrt{\frac{2}{\underline p\min_i\{\tau_i^{-1}p_i^{-1}\}}}$, $c=C_{2,V}\sqrt{\| A\|/2}$, and
\begin{equation}
H = \begin{bmatrix} \tau^{-1} & A^\top \\ 0 & \sigma^{-1} \end{bmatrix}, ~~~~ M = \begin{bmatrix} 0 & A^\top \\ -A & 0 \end{bmatrix}.\notag
\end{equation}
\end{theorem}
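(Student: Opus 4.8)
The plan is to track the Bregman-type quantity $\Delta_k := D_p(x_k;z_k^\star)+V(z_k-z_k^\star)$ and to prove the one-step contraction $\mathbb{E}_k[\Delta_{k+1}]\le(1-\rho)\Delta_k$. Since $z_k^\star$ is $\mathcal{F}_k$-measurable, I may apply Lemma~\ref{lem: lem1} with its free vector set to $z=z_k^\star$ realization-by-realization. At a solution $z_k^\star\in\mathcal{Z}_\star$, the KKT inclusions~\eqref{eq: kkt} together with convexity of $f+g$ and $h^\ast$ force $D_p(x_k;z_k^\star)\ge0$ and $D_d(\bar y_{k+1};z_k^\star)\ge0$; moreover the minimality defining $z_{k+1}^\star$ gives $\Delta_{k+1}\le D_p(x_{k+1};z_k^\star)+V(z_{k+1}-z_k^\star)$. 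Substituting these into Lemma~\ref{lem: lem1} collapses it to
\[
\mathbb{E}_k[\Delta_{k+1}]\le\Delta_k-\underline p\,D_p(x_k;z_k^\star)-\underline p\,D_d(\bar y_{k+1};z_k^\star)-\tilde V(\bar z_{k+1}-z_k).
\]

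Because $\rho=\min(\underline p,\rho_2)$ with $\rho_2=C_{2,\tilde V}/\big(C_{V,2}((2+2c)+(1+c)(\eta\|H-M\|+\bar\beta))^2\big)$, it suffices to bound the last three terms below by $\rho\Delta_k$. Discarding $D_d\ge0$ and using $\underline p\,D_p(x_k;z_k^\star)\ge\rho\,D_p(x_k;z_k^\star)$, the contraction reduces to the single inequality $\tilde V(\bar z_{k+1}-z_k)\ge\rho_2\,V(z_k-z_k^\star)$. Invoking the two-sided norm estimates $\tilde V(w)\ge C_{2,\tilde V}\|w\|^2$ and $V(w)\le C_{V,2}\|w\|^2$ — positivity of $\tilde V$ is exactly what the step-size rule~\eqref{eq: ss_choice} guarantees, since~\eqref{eq: ss_choice} is equivalent to $C(\tau)_i>0$ — reduces the whole theorem to the metric error bound
\[
\|z_k-z_k^\star\|\le\big((2+2c)+(1+c)(\eta\|H-M\|+\bar\beta)\big)\,\|\bar z_{k+1}-z_k\|.
\]

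This error bound is the crux. Reading off the optimality conditions of the two prox steps of Algorithm~\ref{alg:stripd} and inserting the resulting subgradients into $F$, the prox residuals combine with the extrapolation so that the vector $v=(v_1,v_2)$ with $v_1=\nabla f(\bar x_{k+1})-\nabla f(x_k)-\tau^{-1}(\bar x_{k+1}-x_k)$ and $v_2=A(\bar x_{k+1}-x_k)+\sigma^{-1}(\bar y_{k+1}-y_k)$ belongs to $F(\bar z_{k+1})$; this is $\pm(H-M)(\bar z_{k+1}-z_k)$ plus the gradient increment, whence $\|v\|\le(\|H-M\|+\bar\beta)\|\bar z_{k+1}-z_k\|$ by global Lipschitzness of $\nabla f$. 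Metric subregularity (Assumption~\ref{asmp: asmp2}, applicable because $\bar z_{k+1}\in\mathcal{N}(z_\star)$) then yields $\dist(\bar z_{k+1},\mathcal{Z}_\star)\le\eta\|v\|$, and a triangle inequality controls $\dist(z_k,\mathcal{Z}_\star)$. The delicate part is converting this Euclidean distance, attained at the metric projection $\hat z_k$, into the Bregman projection distance $\|z_k-z_k^\star\|$: minimality of $z_k^\star$ gives $\Delta_k\le D_p(x_k;\hat z_k)+V(z_k-\hat z_k)$, so the primal gap $D_p(x_k;\hat z_k)$ must still be bounded. I would shift the evaluation point from $x_k$ to $\bar x_{k+1}$ and use the gap identity $D_p(\bar x_{k+1};\hat z_k)+D_d(\bar y_{k+1};\hat z_k)\le\langle v_1,\bar x_{k+1}-\hat x_k\rangle-\langle v_2,\bar y_{k+1}-\hat y_k\rangle$, in which the bilinear $A$-terms cancel, converting norms through $\|x\|\le C_{2,V}\sqrt{V}$; this is precisely where $c=C_{2,V}\sqrt{\|A\|/2}$ enters and produces the factor $(1+c)$.

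Granting the error bound, $\mathbb{E}_k[\Delta_{k+1}]\le(1-\rho)\Delta_k$ follows; taking total expectations and iterating via the tower property gives $\mathbb{E}[\Delta_k]\le(1-\rho)^k\Delta_0$ with $\Delta_0=D_p(x_0;z_0^\star)+V(z_0-z_0^\star)$. Since $D_p(x_k;z_k^\star)\ge0$ implies $V(z_k-z_k^\star)\le\Delta_k$, and the left-hand side of the theorem is exactly $\mathbb{E}[V(z_k-z_k^\star)]$, the claim follows. I expect the main obstacle to be the control of the nonsmooth primal gap $D_p(x_k;\hat z_k)$ in the Euclidean-to-Bregman conversion: $g$ is not differentiable and the reference point $\hat z_k$ is the metric projection rather than $z_k^\star$, so the passage must route through the residual $v$ and the operator norm $\|A\|$, which is the source of the $c$-dependent constants.
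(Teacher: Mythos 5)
Your overall architecture matches the paper's proof: Lemma~\ref{lem: lem1} evaluated at $z=z_k^\star$, discarding $D_d\ge 0$, passing to $z_{k+1}^\star$ by the minimality in~\eqref{eq: def_zkstar}, reducing the contraction to the single quadratic error bound $\rho_2\,V(z_k-z_k^\star)\le \tilde V(\bar z_{k+1}-z_k)$, and certifying that bound through metric subregularity applied to a prox residual. Your residual $v=(v_1,v_2)\in F(\bar z_{k+1})$ with $\|v\|\le(\|H-M\|+\bar\beta)\|\bar z_{k+1}-z_k\|$ is exactly the paper's element $(H-M-C)(z_k-\bar z_{k+1})$ (modulo a harmless sign on the dual block), and your final tower-property iteration and the passage to the stated conclusion via $D_p\ge 0$ are identical to the paper's.

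The genuine gap is in the Euclidean-to-Bregman conversion, precisely the step you flag as the main obstacle. Your route is $V(z_k-z_k^\star)\le \Delta_k\le D_p(x_k;\hat z_k)+V(z_k-\hat z_k)$, which forces you to prove $D_p(x_k;\hat z_k)=O(\|\bar z_{k+1}-z_k\|^2)$; that cannot work. Your gap inequality does give a quadratic bound, but only at the prox output: $D_p(\bar x_{k+1};\hat z_k)+D_d(\bar y_{k+1};\hat z_k)\le\langle v_1,\bar x_{k+1}-\hat x_k\rangle-\langle v_2,\bar y_{k+1}-\hat y_k\rangle$. Transporting it back to $x_k$ costs $f(x_k)+g(x_k)-f(\bar x_{k+1})-g(\bar x_{k+1})+\langle A^\top\hat y_k,x_k-\bar x_{k+1}\rangle$, and with nonsmooth $g$ this is only $O(\|\bar x_{k+1}-x_k\|)$, i.e.\ linear. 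Indeed $D_p(x_k;\hat z_k)$ is generically linear in the distance to the solution set: with $f=0$, $g=\lambda\|\cdot\|_1$, and $-A^\top\hat y_k$ in the interior of $\partial g(\hat x_k)$, one has $D_p(x_k;\hat z_k)\ge \gamma\|x_k-\hat x_k\|_1$ for some $\gamma>0$, while $\|\bar z_{k+1}-z_k\|$ is of the same linear order; so $D_p(x_k;\hat z_k)\le C\|\bar z_{k+1}-z_k\|^2$ fails as $z_k\to\mathcal Z_\star$, and your route yields only $V(z_k-z_k^\star)=O(\|\bar z_{k+1}-z_k\|)$, one power short of what the linear rate needs. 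The paper's fix (Lemma~\ref{eq: lemma_linconv}) never upper-bounds $D_p$ at all: on $\mathcal Z_\star$ the map $u\mapsto D_p(x_k;u)$ is independent of $u_x$ and affine in $u_y$ (all solutions share the same Lagrangian values), so the Bregman projection $z_k^\star$ is a linearly perturbed $V$-projection; summing the two prox inequalities for $z_k^\star$ and for the $V$-projection $z_k^{\star,e}$ and applying Cauchy--Schwarz gives $V(z_k^\star-z_k^{\star,e})\le c\,V(z_k-z_k^{\star,e})$ with $c=C_{2,V}\sqrt{\|A\|/2}$. Triangle inequalities, nonexpansiveness of the $V$-projection, and the subregularity bound on $\dist(\bar z_{k+1},\mathcal Z_\star)$ then give $\|z_k-z_k^\star\|\le\bigl((2+2c)+(1+c)\eta(\|H-M\|+\bar\beta)\bigr)\|\bar z_{k+1}-z_k\|$ without ever estimating a $D_p$ term. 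This projection-comparison lemma is the ingredient missing from your proposal.
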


The first remark about Theorem~\ref{thm: thm2} is that since metric subregularity constant $\eta$ is not required in the algorithm, the step sizes to achieve linear convergence are the same step sizes as~\eqref{eq: ss_choice}.
Therefore, PURE-CD adapts to structures on the problem, without any need to modify the algorithm, and attains linear rate of convergence.
This supports the well-known observation that primal-dual algorithms converge linearly on most problems, with standard step sizes in~\eqref{eq: ss_choice}.

In particular, a direct corollary of our theorem is that for problems listed in Example~\ref{ex: ms_examples}, PURE-CD obtains linear rate of convergence.
For the first case in Example~\ref{ex: ms_examples}, our result applies directly since the neighborhood of subregularity $\mathcal{N}(z_\star)$ is the whole space.
For the second case, we have to assume additionally that $\bar{z}_k$ is contained in a compact set, since the $\mathcal{N}(z_\star)$ is not the whole space, and is bounded.
A sufficient assumption for this is when the domains of $g$ and $h^\ast$ are compact.
We note that compactness is only required for this result in our paper. 
This is common to other results for PDCD methods with metric subregularity~\cite{latafat2019new,alacaoglu2019convergence}.
The issue, as explained in~\cite{alacaoglu2019convergence}, stems from a fundamental limitation of the existing analyses of PDCD methods.

Many results in the literature for linear convergence only applies to the first case in Example~\ref{ex: ms_examples}, when $g, h^\ast$ are strongly convex~\cite{zhang2017stochastic,chambolle2018stochastic}.
Moreover, these results require setting step sizes depending on strong convexity constants of $g, h^\ast$, therefore not applicable when strong convexity is absent.
Our result applies to more general problems and it uses step sizes independent of these constants.
Our algorithm can be directly applied to any problem satisfying Assumption~\ref{asmp: asmp1} and fast convergence will occur provably, if the selected problem is in Example~\ref{ex: ms_examples}.

Compared with the linear convergence rate in~\cite{latafat2019new} for TriPD-BC, our result have a similar contraction factor, however, due to larger step sizes (see~\Cref{rm: rm1}), the rate comes with a better constant.

\subsection{Ergodic rates}\label{sec: ergodic}
In this section, we study Algorithm~\ref{alg:stripd} in the general case, under Assumption~\ref{asmp: asmp1}, and show the optimal $\mathcal{O}(1/k)$ convergence rate on the ergodic sequence.
The quantity of interest is the primal-dual gap function~\cite{chambolle2011first}
\begin{align}
G(\bar{x}, \bar{y}) &= \sup_{z=(x, y)\in\mathcal{Z}} f(\bar{x})+g(\bar{x}) + \langle A\bar{x}, y \rangle - h^\ast(y) \notag \\
&-f(x) - g(x) - \langle Ax, \bar{y} \rangle + h^\ast(\bar{y}).\label{eq: def_pdgap}
\end{align}
A related quantity is the restricted gap function (see~\cite{chambolle2011first}) for any set $\mathcal{C}\subset\mathcal{Z}$
\begin{align}
G_\mathcal{C}(\bar{x}, \bar{y}) &= \sup_{z\in\mathcal{C}} f(\bar{x})+g(\bar{x}) + \langle A\bar{x}, y \rangle - h^\ast(y) \notag \\
&-f(x) - g(x) - \langle Ax, \bar{y} \rangle + h^\ast(\bar{y}).
\end{align}

Due to randomization in PDCD, we are interested in the expected primal-dual gap, denoted as $\mathbb{E}\left[ G_{\mathcal{C}}(\bar{x}, \bar{y}) \right]$.
As noted by~\citet{dang2014randomized}, it is technically challenging to prove rates for this quantity as it is the expectation of supremum.
Recently,~\cite{alacaoglu2019convergence} used a technique to show convergence of expected primal-dual gap for SPDHG of~\cite{chambolle2018stochastic}.
This rate is for ergodic sequence averaging $x_k$ and the full dual variable $\bar{y}_k$.
We can use this technique for our analysis. However, there remains another technical challenge as full dual variable is not computed in PURE-CD.
Thus, averaging $\bar{y}_k$ is not feasible in our case.

In addition to~\Cref{asmp: asmp1}, in this section we will assume separability of $h$, to be able to do an efficient averaging with the dual iterate.

Due to the asymmetric nature of Algorithm~\ref{alg:stripd}, there are fundamental difficulties for proving a rate with averaging $y_{k+1}$.
On this front, we propose a new type of analysis for the dual variable.
To start with, we define the following iterate which has the same cost to compute as $y_{k+1}$ each iteration.
Let $\breve{y}_1 = y_1 = \bar{y}_1$,
\begin{equation}
\begin{aligned}\label{eq: breve_def}
\breve{y}_{k+1}^j &= \bar{y}_{k+1}^j, &&\forall j \in J(i_{k+1}),\\
\breve{y}_{k+1}^j &= \breve{y}_k^j, &&\forall j\not\in J(i_{k+1}).
\end{aligned}
\end{equation}
We note that $\breve{y}_k$ is $\mathcal{F}_k$-measurable and more useful properties of $\breve{y}_k$ for analysis are discussed in~\Cref{lem: breve} in the appendix.

Due to the definition of $\breve{y}_k$, it is now feasible to compute and average this iterate.
We can show the convergence of expected primal-dual gap by averaging $\breve{y}_k$ and $x_k$.
We remark that we use some coarse inequalities to give simple constants for~\Cref{th:erg} and~\Cref{cor: erg} in this section, which results in suboptimal dependence with respect to dimension $n$.
In~\Cref{sec: proofs}, we give these theorems with their original, tighter bounds and we show how we transform the tighter bounds into the constants we give in this section.
\begin{theorem}\label{th:erg}
Let Assumption~\ref{asmp: asmp1} hold, $\theta, \tau, \sigma$ are chosen as in~\eqref{eq: theta_choice},~\eqref{eq: ss_choice}, and $h$ be separable.
Let $x^{av}_K = \frac{1}{K} \sum_{k=1}^K x_{k}$ and $y^{av}_K = \frac{1}{K} \sum_{k=1}^K \breve{y}_{k}$, where $\breve{y}_k$ is defined in~\eqref{eq: breve_def}. Then it holds that for any bounded set $\mathcal{C}=\mathcal{C}_x\times\mathcal{C}_y\subset\mathcal{Z}$
\begin{align}
\mathbb{E} \left[G_\mathcal{C}(x^{av}_K, y^{av}_K) \right] \leq \frac{C_{g}}{\underline pK},\notag
\end{align}
where $C_g = \sum_{i=1}^4 C_{g, i}$, $C_{\tau, \tilde V} =  \min_i C(\tau)_i\tau_i$, \\
$C_{g, 1} = \sup_{z\in \mathcal{C}} \big\{2\underline p \| x_0 - x\|^2_{\tau^{-1}P^{-1}} + 2\underline p\| y_0 - y \|^2_{\sigma^{-1}\pi^{-1}} \big\} 
+4\sqrt{\Delta_0\underline p^{-1}}\|A\| \sup_{y\in \mathcal{C}_y}\|y\|_{\tau P} \\+ \sqrt{\Delta_0(\underline p^{-1} - 2\underline p^{-2}C_{\tau,\tilde V}^{-1})}\|A\|\sup_{x\in\mathcal{C}_x}\|x\|_{\sigma\pi}$,\\
$\sum_{i=2}^4 C_{g, i} = \Delta_0\left( 5+9\underline p^{-1} + C_{\tau,\tilde V}^{-1}\left(1+10\underline p^{-1} + 8\underline p^{-2}\right) \right)  \\
+(1-\underline p)(f(x_0) + g(x_0) - f(x_\star) - g(x_\star)) + h^\ast(y_0) - h^\ast(y_\star)  \\
+\underline p\| Ax_\star \|_{\sigma\pi^{-1}} + \|A^\top y_\star \|^2_{\tau P}$.
\end{theorem}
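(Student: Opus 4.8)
The plan is to bound the expected restricted gap by telescoping the one-step estimate of Lemma~\ref{lem: lem1} and averaging, after first reducing $G_{\mathcal C}$ to the one-sided quantities $D_p, D_d$ and replacing the dual term $\bar y_{k+1}$ (which is never fully computed by the algorithm) with the computable iterate $\breve y_k$. First I would record the algebraic identity that, for any fixed $z=(x,y)$, the expression inside the supremum defining $G_{\mathcal C}$ at a point $(\bar x,\bar y)$ equals $D_p(\bar x;z)+D_d(\bar y;z)$, the cross terms $-\langle A^\top y,x\rangle$ and $\langle Ax,y\rangle$ cancelling. Since $f+g$ and $h^\ast$ are convex, $D_p(\cdot;z)$ and $D_d(\cdot;z)$ are convex in their first argument, so Jensen's inequality applied to the averages gives
\[
G_{\mathcal C}(x^{av}_K,y^{av}_K)\le \sup_{z\in\mathcal C}\frac1K\sum_{k=1}^K\big(D_p(x_k;z)+D_d(\breve y_k;z)\big),
\]
and it suffices to bound the expectation of the right-hand side.

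The second step converts the dual iterate. Because $h$ is separable, $D_d(\cdot;z)=\sum_{j} d_j(\cdot)$ splits across coordinates, and by~\eqref{eq: breve_def} together with the properties in Lemma~\ref{lem: breve} each coordinate obeys $\mathbb{E}_k[d_j(\breve y_{k+1}^j)]=\pi_j\, d_j(\bar y_{k+1}^j)+(1-\pi_j)\,d_j(\breve y_k^j)$, since $j\in J(i_{k+1})$ with probability $\pi_j$. Dividing by $\pi_j$ and telescoping the recursion in $k$ rewrites $\sum_{k}\mathbb{E}[D_d(\breve y_k;z)]$ in terms of $\sum_{k}\mathbb{E}[D_d(\bar y_{k+1};z)]$ plus endpoint contributions at $k=0$ and $k=K$ weighted by $(1-\pi_j)/\pi_j$; the first is an initialization constant and the second must be absorbed into $C_g$. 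This matches what Lemma~\ref{lem: lem1} controls, up to the factor $\underline p$.

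Telescoping Lemma~\ref{lem: lem1} (the terms $\mathbb{E}_k[D_p(x_{k+1};z)]$ and $(1-\underline p)D_p(x_k;z)$ combining to leave $\underline p\sum_k D_p(x_k;z)$, and the $V$ terms telescoping) then bounds $\underline p\sum_k\mathbb{E}[D_p(x_k;z)]+\underline p\sum_k\mathbb{E}[D_d(\bar y_{k+1};z)]$ by $D_p(x_0;z)+V(z_0-z)$ minus the accumulated $\sum_k\mathbb{E}[\tilde V(\bar z_{k+1}-z_k)]$ and up to the primal endpoint terms. The genuine difficulty is that we must control $\mathbb{E}[\sup_{z\in\mathcal C}(\cdots)]$ rather than $\sup_{z}\mathbb{E}[\cdots]$: the maximizing $z$ depends on the whole trajectory, so $z$ cannot be treated as deterministic inside the conditional expectations. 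Following~\cite{alacaoglu2019convergence}, I would keep the per-realization inequality and isolate the terms that are linear in $z$ with vanishing conditional mean, namely $\langle A^\top y, x_{k+1}-\mathbb{E}_k x_{k+1}\rangle$ on the primal side and the analogous fluctuation of $\breve y_k$ paired with $Ax$ on the dual side. These are martingale-difference sequences, so by the generalized Cauchy--Schwarz inequality and boundedness of $\mathcal C$,
\[
\mathbb{E}\Big[\sup_{y\in\mathcal C_y}\big\langle A^\top y,\textstyle\sum_k (x_{k+1}-\mathbb{E}_k x_{k+1})\big\rangle\Big]\le \|A\|\sup_{y\in\mathcal C_y}\|y\|_{\tau P}\,\sqrt{\mathbb{E}\big\|\textstyle\sum_k (x_{k+1}-\mathbb{E}_k x_{k+1})\big\|^2},
\]
and orthogonality of martingale differences collapses the second factor into $\sum_k\mathbb{E}\|x_{k+1}-\mathbb{E}_k x_{k+1}\|^2$, which is controlled by the accumulated $\tilde V$ budget. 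Instantiating the telescoped inequality at a solution $z_\star$ shows this budget is at most $\Delta_0$, producing exactly the $\sqrt{\Delta_0}\,\|A\|\sup_{z\in\mathcal C}\|z\|$-type summands appearing in $C_{g,1}$.

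Finally, grouping the deterministic endpoint terms (the supremum over $z\in\mathcal C$ of $D_p(x_0;z)+V(z_0-z)$, the dual $k=0,K$ contributions, and the primal endpoint terms), together with the two martingale bounds, dividing by $\underline p K$ and taking the supremum over the bounded set $\mathcal C$, yields the stated $C_g/(\underline p K)$ rate; the individual $C_{g,i}$ are just these grouped contributions. The main obstacle I expect is the third step: making the $\mathbb{E}[\sup_z]$ argument rigorous while simultaneously managing the asymmetry introduced by averaging $\breve y_k$ instead of $\bar y_{k+1}$, so that the $k=K$ dual endpoint term and the two martingale $L^2$ bounds all close against the single budget of accumulated $\tilde V$ contraction supplied by Lemma~\ref{lem: lem1}. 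Checking that the per-coordinate $\pi_j$-weighting and the choice $\theta_j=\pi_j/\underline p$ stay consistent throughout this bookkeeping is the delicate but routine remainder.
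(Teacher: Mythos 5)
Your architecture matches the paper's: reduce $G_{\mathcal C}$ to $D_p+D_d$ by Jensen, trade $\bar y_{k+1}$ for the computable $\breve y_k$, keep a per-realization inequality so the supremum over $z\in\mathcal C$ can be taken \emph{before} any expectation, and control the $z$-linear fluctuation terms against the budget $\sum_k \mathbb{E}[\tilde V(\bar z_{k+1}-z_k)]\le\Delta_0$. Your handling of those fluctuations (Cauchy--Schwarz to pull out $\|A\|\sup_{\mathcal C}\|\cdot\|$, then $L^2$ orthogonality of martingale differences) is a legitimate alternative to the paper's device, Lemma~\ref{lem: random_proc_lemma}, which instead keeps the supremum inside at the price of a quadratic penalty $\tfrac12\|\tilde x_1-x\|^2_\gamma$; either route yields constants of the same form, organized differently.

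There are, however, two genuine gaps in the execution. First, both your dual-conversion step and your ``telescoping of Lemma~\ref{lem: lem1}'' are written with conditional expectations at a fixed $z$; that is a $\sup_z\mathbb{E}[\cdot]$ argument and cannot be combined with the $\mathbb{E}[\sup_z(\cdot)]$ quantity you must bound. Lemma~\ref{lem: lem1} cannot be telescoped under the supremum because $\mathbb{E}_k[\cdot]$ sits inside it; it has to be re-derived pathwise (the paper's Lemma~\ref{lem: lem_erg2}), and likewise the passage from $\bar y_{k+1}$ to $\breve y_k$ must be an exact pathwise identity (the paper's Lemma~\ref{lem: lem_erg3}) splitting into $z$-free terms that vanish in expectation, a $z$-linear martingale term, and telescoping $D_d^{\pi^{-1}-I}$ differences. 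Second---and this is where your proof would fail to close---your list of martingale terms (``namely \dots'') is incomplete. In the pathwise analysis the prox inequalities control $\bar x_{k+1},\bar y_{k+1}$, while the distances that telescope involve the realized $x_{k+1},y_{k+1}$; reconciling the squared norms pathwise produces two further $z$-linear fluctuation terms, $\underline p\,\langle x,\; x_k-\bar x_{k+1}-P^{-1}(x_k-x_{k+1})\rangle_{\tau^{-1}}$ and $\underline p\,\langle y,\; \pi^{-1}\sigma^{-1}(y_k-y_{k+1})-\sigma^{-1}(y_k-\bar y_{k+1})+\pi^{-1}\theta AP(\bar x_{k+1}-x_k)\rangle$ (the terms collected in the paper's $S_2$), in addition to the two you name. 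If these are not isolated and given the same martingale treatment, the pathwise inequality does not telescope and non-vanishing $z$-dependent remainders survive under the supremum. Finally, the endpoint terms $-D_p(x_K;z)$ and $-D_d^{\pi^{-1}-I}(\breve y_K;z)$ need explicit lower bounds (via convexity at $z_\star$ and the $\Delta_0$ budget), which you flag as ``to be absorbed'' but do not supply.
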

\begin{remark}
When implementing averaging of $x_k$, and $\breve{y}_k$, one should use a technique similar to~\cite{dang2015stochastic}.
The main idea is to only update the averaged vector at the coordinates where an update occurred.
For this, one needs to remember for each coordinate, the last time it is updated, and update the averaged vector using this information, when a coordinate is selected.
\end{remark}
The result in~\Cref{th:erg} would give a rate for primal-dual gap when $\mathcal{C}=\mathcal{Z}$. 
However, in general such a rate is not desirable as taking a supremum over $\mathcal{Z}$ might result in an infinite bound.
This rate would be meaningful when both primal and dual domains are bounded in which case one would take the supremum in $C_{g,1}$ over the bounded domains. 

Alternatively, in the following theorem, we show that for two important special cases, we can extend this result to show guarantees without bounded domains.
Namely, we show the same rate for the case when $h(\cdot) = \delta_{\{b\}}(\cdot)$, $b\in\mathbb{R}^m$ to cover linearly constrained problems.
Moreover, we show the result for the case when $h$ is Lipschitz continuous.

\begin{theorem}\label{cor: erg} 
Let Assumption~\ref{asmp: asmp1} hold. We use the same parameters $\theta,\tau,\sigma$ and the definitions for $x_K^{av}$ and $y_K^{av}$ as Theorem~\ref{th:erg}. 
We consider two cases separately: \\
$\triangleright$ If $h(\cdot) = \delta_{\{ b \}}(\cdot)$, we obtain
\begin{align}
&\mathbb{E} [ f(x_K^{av}) + g(x_K^{av}) - f(x_\star) - g(x_\star)] \leq \frac{C_o}{\underline pK}. \notag\\
&\mathbb{E}[ \|Ax_K^{av}-b \|] \leq \frac{C_f}{\underline pK}.\notag
\end{align}
$\triangleright$ If $h$ is $L_h$-Lipschitz continuous, we obtain
\begin{multline}
\mathbb{E}[ f(x_K^{av}) + g(x_K^{av}) + h(Ax_K^{av}) \\
- f(x_\star) - g(x_\star) -h(Ax_\star) ] \leq \frac{C_l}{\underline pK},\notag
\end{multline}
where 
$C_f = 2c_1\|y_\star - y_0 \|_{\sigma^{-1}\pi^{-1}} + 2\sqrt{c_1C_s} + 2\sqrt{2}c_1$, \\
$C_o = C_s + \| y_\star \|_{\sigma^{-1}\pi^{-1}} C_f + 2c_1\underline p^{-1}V(z_0-z_\star)$,\\
$C_l = C_s + c_1 \| x_\star-x_0\|^2_{\tau^{-1}P^{-1}} + 4 c_1 L_h^2$,\\
$C_s =  C_{g, 2}+ C_{g, 5}+C_{g,6}$, with $c_1 = 2\underline p + 2$, $C_{g, 2}$ as defined in Theorem~\ref{th:erg} and $C_{g,5}, C_{g, 6}$ are defined in the proof in~\eqref{eq: cg3_def},~\eqref{eq: cg4_def}.
\end{theorem}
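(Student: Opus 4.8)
The plan is to obtain both cases from the restricted-gap estimate of Theorem~\ref{th:erg} by instantiating it on a suitable bounded set $\mathcal{C}=\mathcal{C}_x\times\mathcal{C}_y$ and then lower bounding $G_\mathcal{C}(x_K^{av},y_K^{av})$ by the quantity we want to control. Every lower bound I use is a deterministic convex-analytic inequality valid for arbitrary $(\bar x,\bar y)$, so once it holds pointwise it survives taking expectations and chains directly with the $\mathcal{O}(1/(\underline p K))$ bound on $\mathbb{E}[G_\mathcal{C}]$. As a preliminary I would record that $\breve y_k\in\dom h^\ast$ for all $k$, since each updated coordinate is a value of $\prox_{\sigma,h^\ast}$ and hence lies in $\dom h_j^\ast$; because $h$ is separable, $\dom h^\ast=\prod_j\dom h_j^\ast$ is convex, so the average $y_K^{av}\in\dom h^\ast$ and $h^\ast(y_K^{av})<\infty$, keeping the gap finite.

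For $h=\delta_{\{b\}}$ we have $h^\ast(y)=\langle b,y\rangle$ and $Ax_\star=b$. The feasibility bound comes from applying Theorem~\ref{th:erg} with $\mathcal{C}=\{x_\star\}\times\{y:\|y\|\le r\}$: using $Ax_\star=b$ the dual part collapses to $-f(x_\star)-g(x_\star)$ and the primal supremum over the dual ball gives $r\|Ax_K^{av}-b\|$, so $G_\mathcal{C}(x_K^{av},y_K^{av})=f(x_K^{av})+g(x_K^{av})-f(x_\star)-g(x_\star)+r\|Ax_K^{av}-b\|$. Lagrangian optimality of $x_\star$ for $L(\cdot,y_\star)$ gives $f(x_K^{av})+g(x_K^{av})-f(x_\star)-g(x_\star)\ge-\|y_\star\|\,\|Ax_K^{av}-b\|$, hence $G_\mathcal{C}\ge(r-\|y_\star\|)\|Ax_K^{av}-b\|$ for any $r>\|y_\star\|$. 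This bound holds for every such $r$ while $C_{g,1}$ grows quadratically in $r$, so optimizing the ratio over $r$ yields the square-root structure of $C_f$ and produces $\mathbb{E}[\|Ax_K^{av}-b\|]\le C_f/(\underline p K)$. For the objective I would instead restrict to the single point $\mathcal{C}=\{(x_\star,y_\star)\}$: there $G_\mathcal{C}(x_K^{av},y_K^{av})=f(x_K^{av})+g(x_K^{av})-f(x_\star)-g(x_\star)+\langle Ax_K^{av}-b,y_\star\rangle$, so the objective gap equals $G_\mathcal{C}-\langle Ax_K^{av}-b,y_\star\rangle\le G_\mathcal{C}+\|y_\star\|\,\|Ax_K^{av}-b\|$; taking expectations and inserting the feasibility bound for the cross term gives $\mathbb{E}[f(x_K^{av})+g(x_K^{av})-f(x_\star)-g(x_\star)]\le C_o/(\underline p K)$, which explains the $\|y_\star\|_{\sigma^{-1}\pi^{-1}}C_f$ summand in $C_o$.

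For the $L_h$-Lipschitz case I would use that $h$ being $L_h$-Lipschitz is equivalent to $\dom h^\ast\subseteq\{y:\|y\|\le L_h\}$. Applying Theorem~\ref{th:erg} with $\mathcal{C}=\{x_\star\}\times\{y:\|y\|\le L_h\}$, the dual supremum over this ball already ranges over all of $\dom h^\ast$, so it reconstructs $h(Ax_K^{av})=\sup_y\{\langle Ax_K^{av},y\rangle-h^\ast(y)\}$ exactly. The leftover dual term is controlled by Fenchel--Young, $-\langle Ax_\star,y_K^{av}\rangle+h^\ast(y_K^{av})\ge-h(Ax_\star)$, giving $G_\mathcal{C}(x_K^{av},y_K^{av})\ge f(x_K^{av})+g(x_K^{av})+h(Ax_K^{av})-f(x_\star)-g(x_\star)-h(Ax_\star)$. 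Taking expectations and invoking Theorem~\ref{th:erg} with this $\mathcal{C}$ yields the claimed rate, with the $4c_1L_h^2$ and $c_1\|x_\star-x_0\|^2_{\tau^{-1}P^{-1}}$ terms of $C_l$ arising from $\sup_{\|y\|\le L_h}\|y_0-y\|^2$ and the $x=x_\star$ slice in $C_{g,1}$.

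The hard part is the indicator case, where $\dom h^\ast=\mathbb{R}^m$ is unbounded and the unrestricted gap is $+\infty$ whenever $Ax_K^{av}\ne b$; the resolution is precisely the ball truncation with $r>\|y_\star\|$ together with the Lagrangian lower bound, which converts the gap into a usable feasibility residual, plus the optimization over $r$ that yields the square-root constants. The remainder is bookkeeping: substituting the chosen sets into $C_{g,1}$, bounding the $\sup_{y\in\mathcal{C}_y}\|y\|_{\tau P}$ and $\sup_{x\in\mathcal{C}_x}\|x\|_{\sigma\pi}$ terms, and collecting everything into $C_f$, $C_o$, $C_l$ with $c_1=2\underline p+2$ and $C_s=C_{g,2}+C_{g,5}+C_{g,6}$; this routine consolidation I would defer.
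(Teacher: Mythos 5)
Your proposal is correct in substance, but it takes a genuinely different route from the paper. The paper does \emph{not} invoke Theorem~\ref{th:erg} as a black box: because the constant $C_{g,1}$ grows with the set $\mathcal{C}$, the authors instead re-run the telescoping argument starting from \Cref{lem: lem_erg3}, absorb the terms that are unbounded in $z$ (the telescoped $\langle A^\top y, x_0 - x_K\rangle$ and the $D_d^{\pi^{-1}-I}$ differences) into quadratic penalties $c_1\|x-x_0\|^2_{\tau^{-1}P^{-1}} + c_2\|y-y_0\|^2_{\sigma^{-1}\pi^{-1}}$ via Cauchy--Schwarz and Young, and thereby bound the expected \emph{smoothed} gap $\mathcal{G}_{2c_1/(\underline p K),\,2c_2/(\underline p K)}$, in which the supremum runs over all of $\mathcal{Z}$. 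The two cases are then closed by external conversion results: \citep[Lemma 1]{tran2018smooth} for $h=\delta_{\{b\}}$ (this is where the square root in $C_f$ originates) and the argument of \citep[Theorem 11]{fercoq2019coordinate} for Lipschitz $h$. You instead keep the restricted gap, instantiate $\mathcal{C}$ as $\{x_\star\}\times\{y:\|y\|\le r\}$, $\{(x_\star,y_\star)\}$, or $\{x_\star\}\times\{y:\|y\|\le L_h\}$, prove deterministic lower bounds on $G_\mathcal{C}$ (your identities $G_\mathcal{C}\ge(r-\|y_\star\|)\|Ax_K^{av}-b\|$, the Fenchel--Young step, and the reconstruction of $h(Ax_K^{av})$ from the ball containing $\dom h^\ast$ are all correct), and then optimize over $r$. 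Your radius optimization is precisely the mechanism hidden inside the smoothed-gap lemma (penalty--constraint duality), so both proofs rest on the same quadratic trade-off; yours is more self-contained and reuses Theorem~\ref{th:erg} wholesale, while the paper's re-derivation yields constants expressed through the penalty coefficients $c_1, c_2$ rather than through suprema over balls.

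Two caveats, neither fatal. First, the stated constants $C_f, C_o, C_l$ are defined through quantities ($c_1$, $C_{g,5}$, $C_{g,6}$) that only arise in the paper's own derivation, so your proof necessarily produces structurally analogous but numerically different constants (your quadratic-in-$r$ coefficient comes from the term $2\underline p\,\sup_{z\in\mathcal{C}}\|y_0-y\|^2_{\sigma^{-1}\pi^{-1}}$ in $C_{g,1}$, not from $c_2$); you should state the result with your own constants rather than claim the literal ones. Second, mind the norms: the paper takes $h$ to be $L_h$-Lipschitz in $\|\cdot\|_{\sigma\pi}$, so that $\dom h^\ast$ sits in an $\|\cdot\|_{\sigma^{-1}\pi^{-1}}$-ball of radius $L_h$ and the feasibility residual is measured in $\|\cdot\|_{\sigma\pi}$; your Euclidean balls are fine in principle but introduce conversion factors involving $\max_j(\sigma_j\pi_j)^{\pm1/2}$ that must be tracked in the consolidation you defer, and the dual-ball argument should be run in the weighted norm if you want the $4c_1L_h^2$ term to come out as stated.
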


\section{Related works}\label{sec: lit}
In the deterministic setting, many primal-dual methods are proposed~\cite{chambolle2011first,vu2013splitting,condat2013primal,tan2020accelerated,latafat2019new}.
The standard results in these papers include linear convergence when $g, h^\ast$ are strongly convex, with step sizes selected by using strong convexity constants.
In addition, these papers also show sublinear $\mathcal{O}(1/k)$ rate with general convexity, which is known to be optimal~\cite{nesterov2005smooth}.
Moreover, linear rates under metric subregularity is shown for deterministic methods in~\cite{liang2016convergence,latafat2019new}.

Randomized coordinate descent is proposed in~\cite{nesterov2012efficiency} and improved by a large body of subsequent papers~\cite{richtarik2014iteration,fercoq2015accelerated}.
Primal randomized coordinate descent requires full separability on the nonsmooth parts of the objective function.
Nonsmooth and nonseparable functions are handled by primal-dual coordinate descent methods~\cite{fercoq2019coordinate}.

One of the first primal-dual coordinate descent (PDCD) methods is SPDC, which is proposed in~\cite{zhang2017stochastic}, that solves a special case of problem~\eqref{eq: prob_temp} with $f=0$.
SPDC has linear convergence when $g, h^\ast$ are strongly convex and the step sizes are selected according to strong convexity constants.
In the general convex case, SPDC has perturbation-based analysis, which needs to set an $\epsilon$, requires knowing $\| x_\star\|^2$, and shows $\epsilon$-based iteration complexity results, and not anytime convergence rates.
Almost sure convergence of the iterates of SPDC is not proven in the general convex case.
Moreover, the step sizes of SPDC are scalar and they depend on the maximum block norm of $A$.
It is shown in~\cite{zhang2017stochastic} that in the specific cases when $g(x)=\| x \|^2$ or $g(x)=\| x\|_1 + \| x \|^2$, one can use a special implementation for efficiency with sparse data.

\citet{tan2020accelerated} proposed a new method similar to SPDC with the same type of guarantees as~\cite{zhang2017stochastic}.
Due to similar analysis techniques, this method inherits the abovementioned drawbacks of SPDC.
For this method,~\citet{tan2020accelerated} showed a new implementation technique for sparse data, that can be used with any separable $g(x)$.

For solving the specific case of empirical risk minimization problems, stochastic dual coordinate ascent (SDCA) is proposed in~\cite{shalev2013stochastic,shalev2014accelerated}.
SDCA uses strong convexity constant to set step sizes and attain linear convergence.
A limitation of SDCA is to require strong convexity in the primal, to ensure smoothness of the dual objective, which is essential in the design of the method.

Another early PDCD method is by~\cite{dang2014randomized} where the authors focused on showing sublinear convergence rates.
The authors showed guarantees for a relaxed version of expected primal-dual gap function in~\eqref{eq: def_pdgap}.

Building on~\cite{dang2014randomized}, block-coordinate variants of alternating direction method of multipliers (ADMM) are proposed in~\cite{gao2019randomized,xu2018accelerated}.
These papers focus on linearly constrained problems and show ergodic sublinear convergence rates.
Moreover,~\cite{xu2018accelerated} showed that under strong convexity assumption and special decomposition of the blocks, the method achieves linear convergence.
This linear convergence result, similar to~\cite{zhang2017stochastic} requires knowing the strong convexity constants to set the algorithmic parameters.
Moreover, these results generally set step sizes depending on global Lipschitz constants and norm of $A$ rather than the norm of the blocks of $A$.

\begin{figure*}[ht]
\begin{center}
\includegraphics[width=0.6\columnwidth]{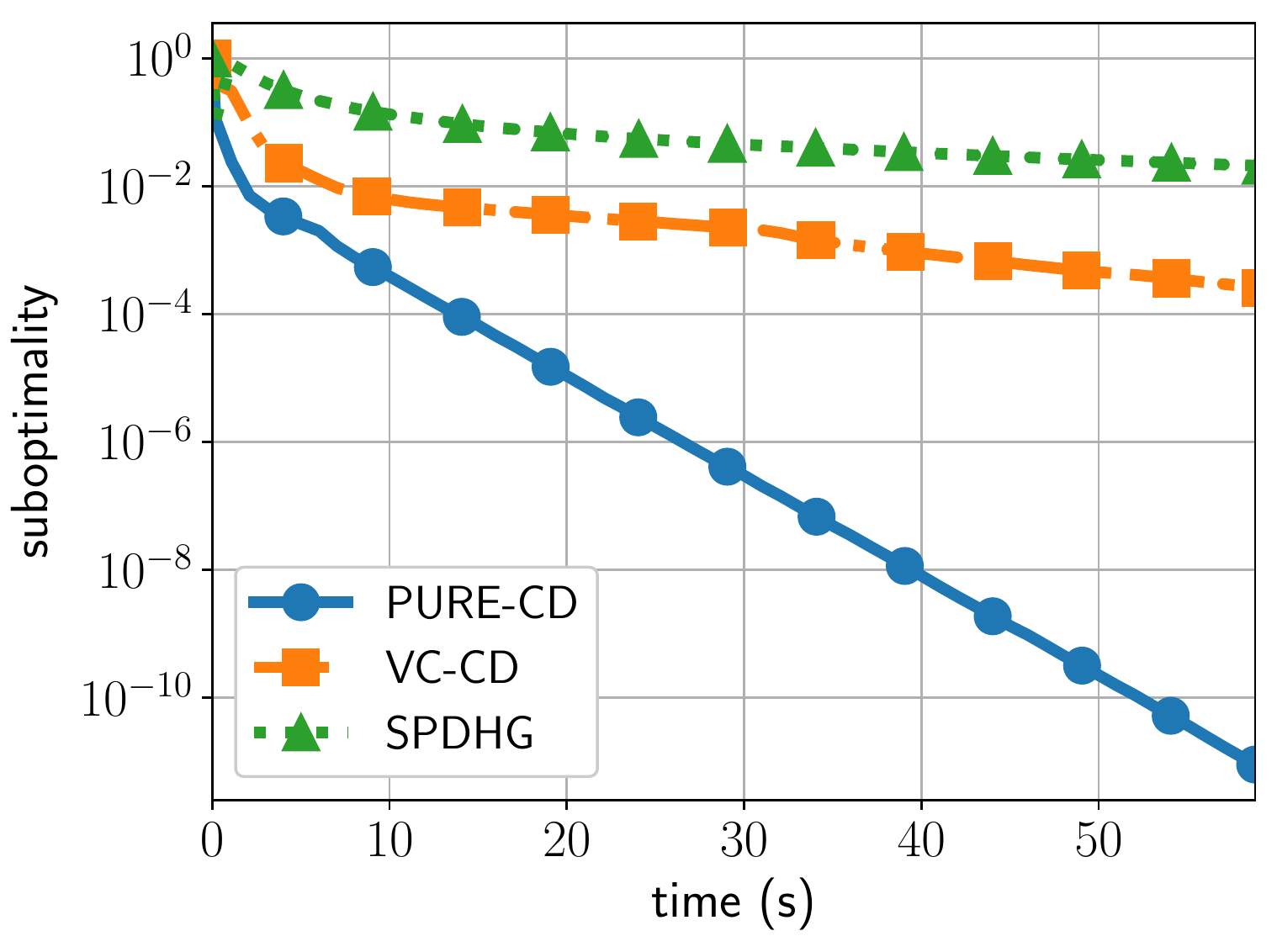}
\includegraphics[width=0.6\columnwidth]{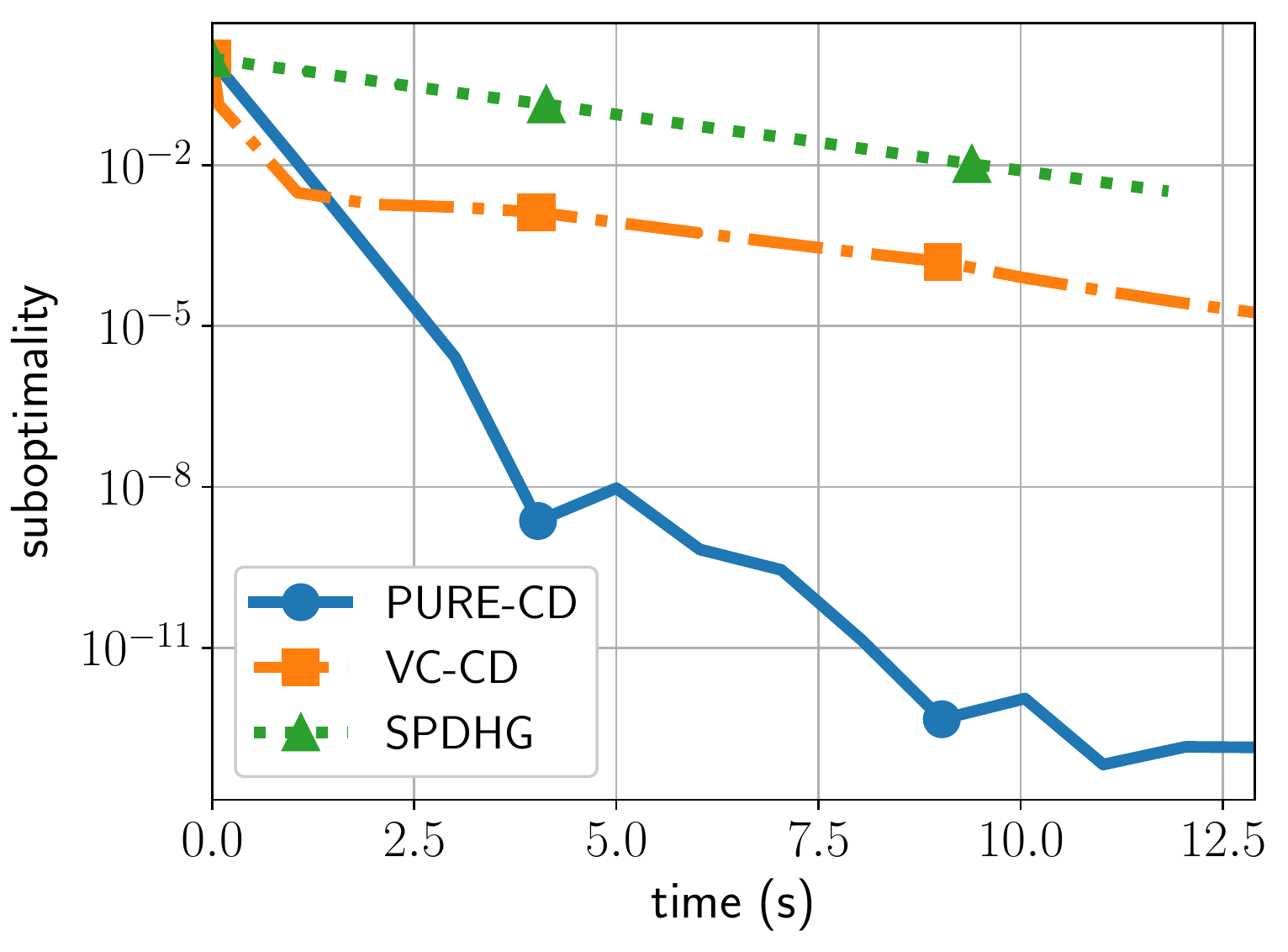}
\includegraphics[width=0.6\columnwidth]{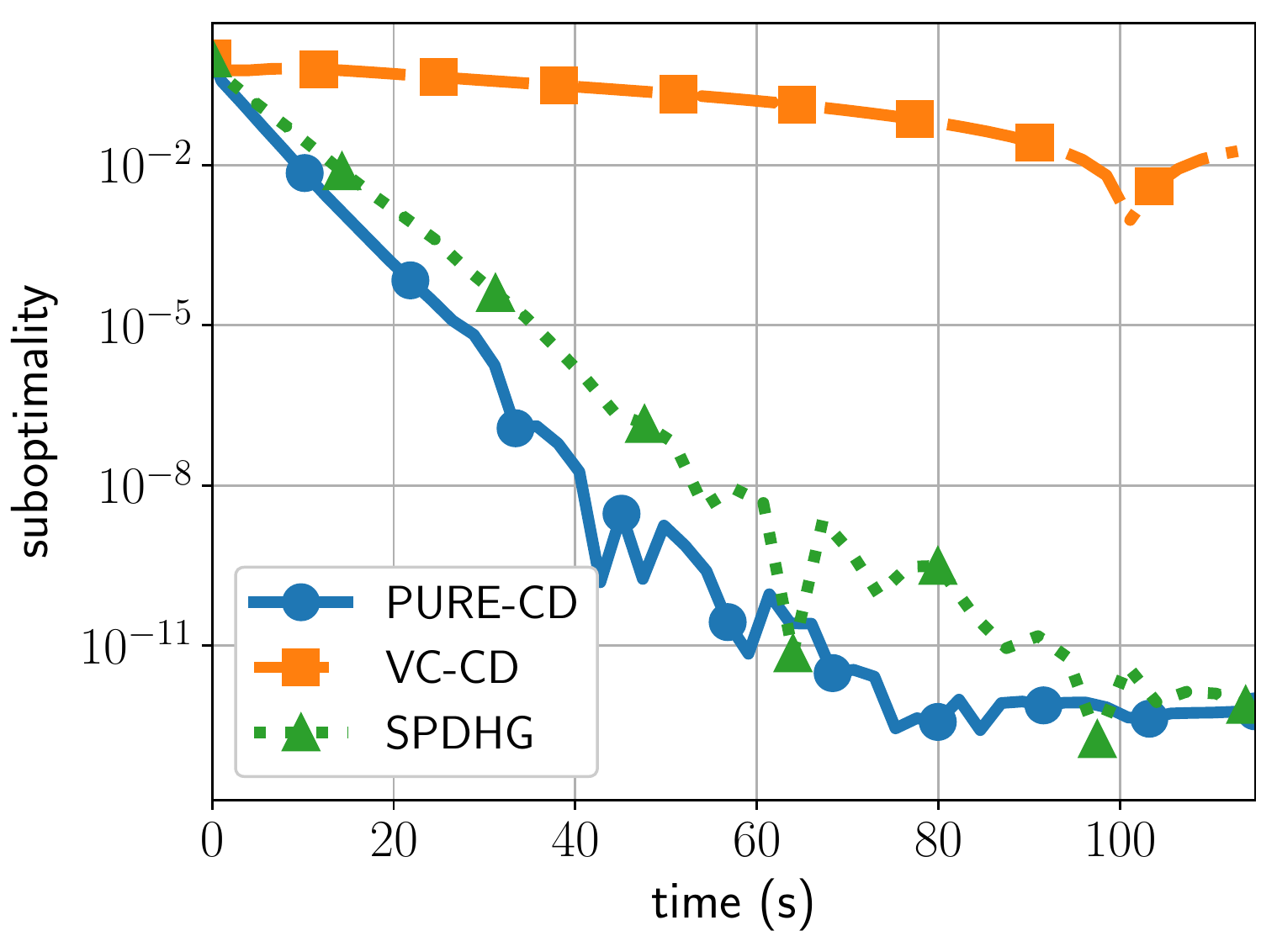}
\caption{Lasso: Left: rcv1, $n=20,242, m=47,236$, density $=0.16 \%$, $\lambda=10$; Middle: w8a, $n=49,749, m=300$, density $=3.9\%$, $\lambda=10^{-1}$; Right: covtype, $n=581,012$, $m=54$, density $=22.1\%$, $\lambda=10$.}
\label{fig:1}
\end{center}
\vskip -0.2in
\end{figure*}
\begin{figure*}[ht]
\begin{center}
\includegraphics[width=0.6\columnwidth]{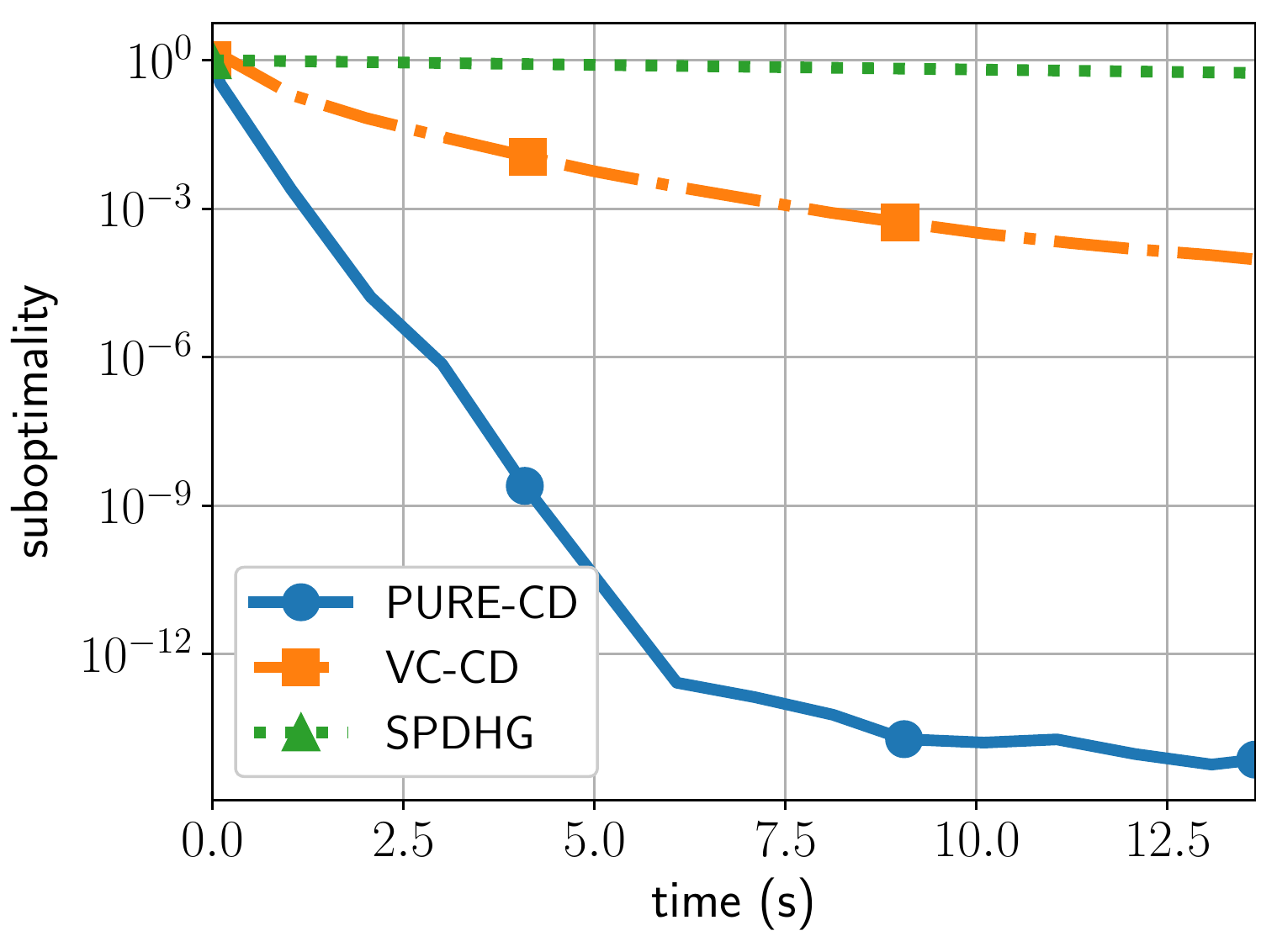}
\includegraphics[width=0.6\columnwidth]{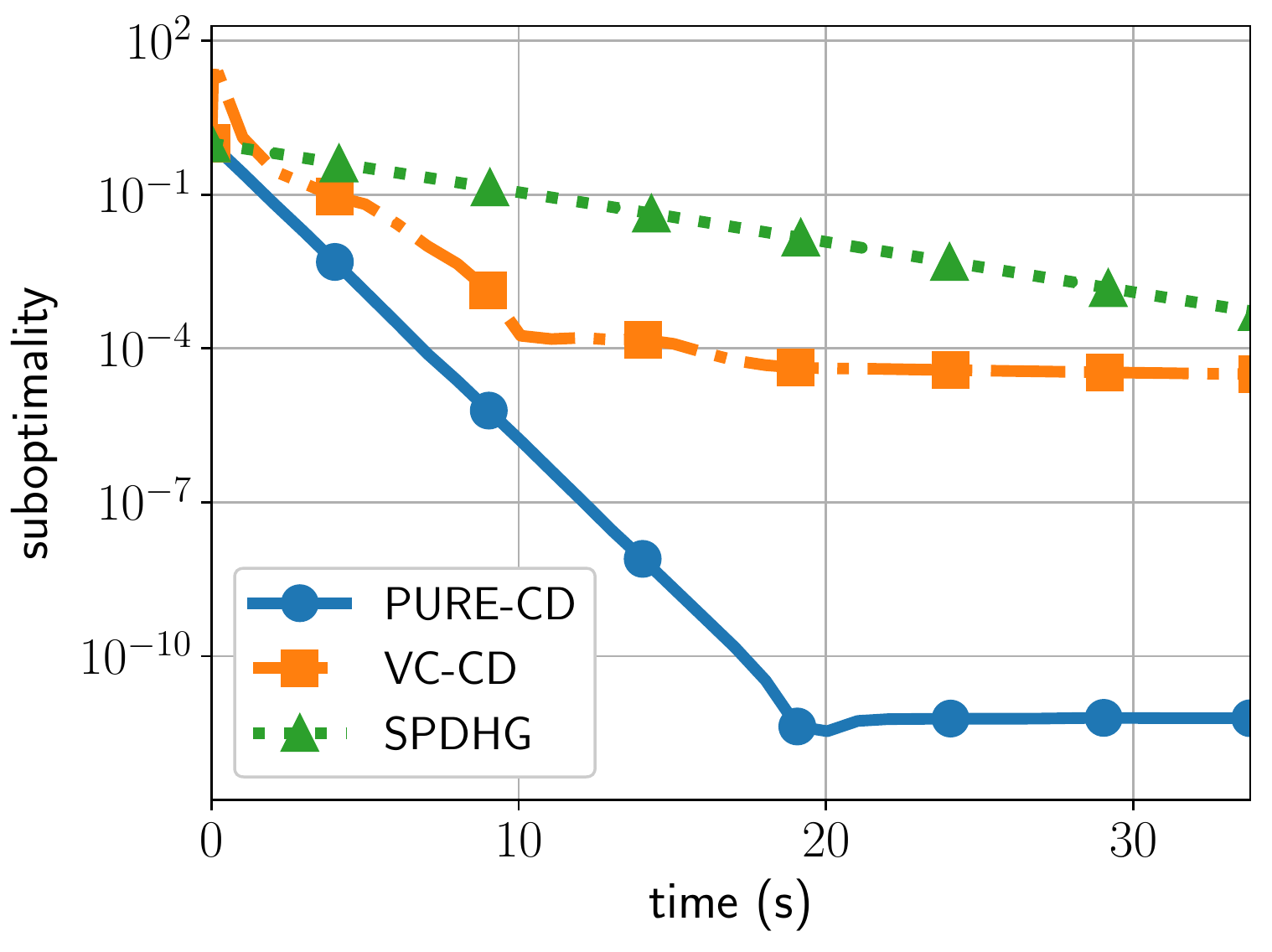}
\includegraphics[width=0.6\columnwidth]{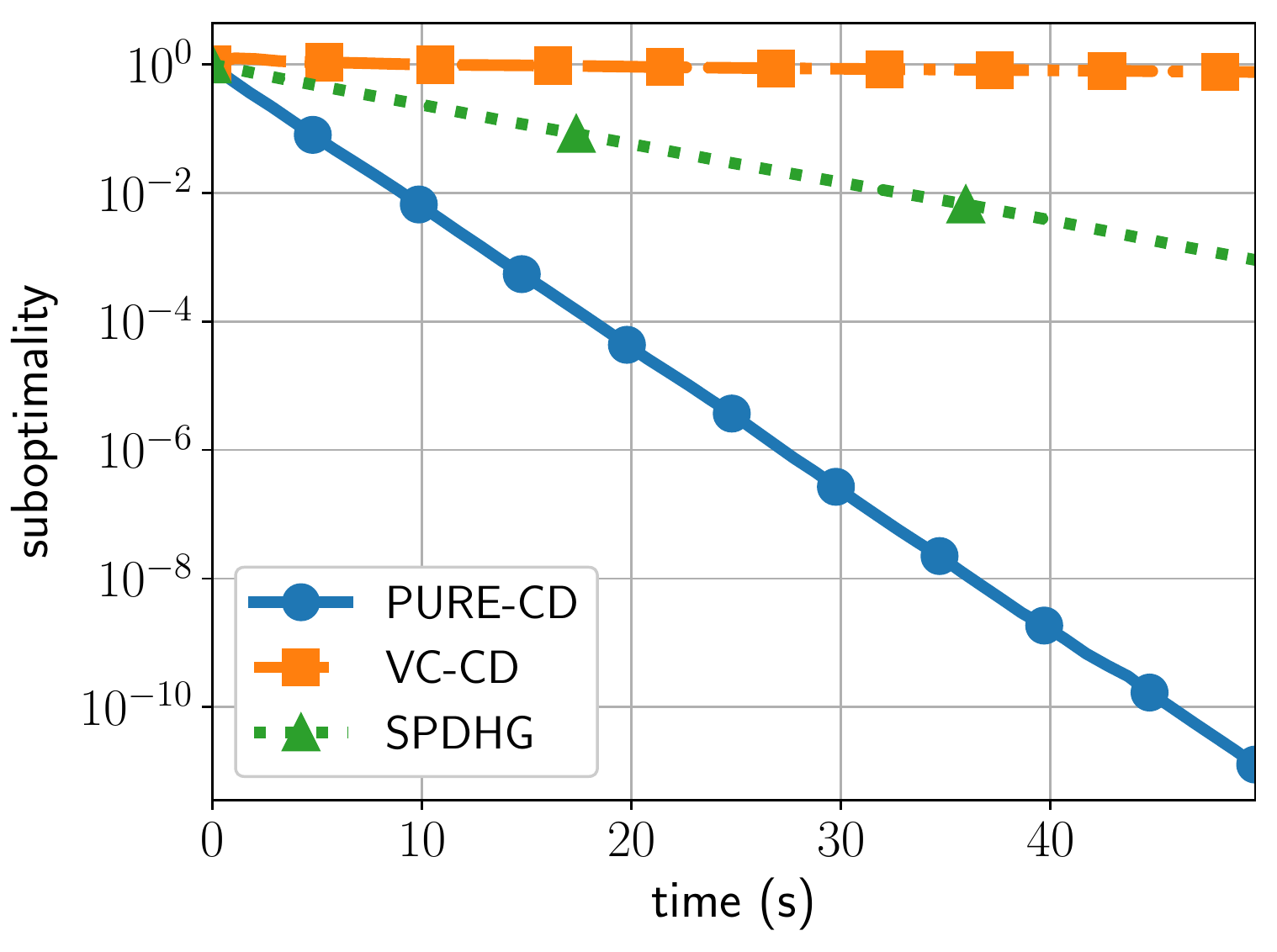}
\caption{Ridge regression: Left: sector, $n=6,412, m=55,197$, density $=0.3 \%$, $\lambda=0.1$; Middle: a9a, $n=32,561, m=123$, density $=11.3\%$, $\lambda=0.1$; Right: mnist, $n=60,000$, $m=780$, density $=19.2\%$, $\lambda=1$.}
\label{fig:2}
\end{center}
\vskip -0.2in
\end{figure*}

PDCD variants are also proposed in~\cite{combettes2015stochastic,combettes2019stochastic,pesquet2015class} and analyzed under the general setting of monotone operators. These methods use global constants of the problem such as global Lipschitz constant of smooth part and $\|A\|$, rather than blockwise constants, resulting in worse practical performance, as illustrated in the experiments of~\cite{chambolle2018stochastic}.

Another early PDCD variant to solve problem~\eqref{eq: prob_temp} in its full generality, where $f, g, h$ are all nonseparable, is by~\citet{fercoq2019coordinate}.
This method uses coordinatewise Lipschitz constants of the smooth part and it is designed to exploit sparsity of $A$.
This method has almost sure convergence guarantees as well as linear convergence when $g, h^\ast$ are strongly convex.
As opposed to most results in this nature, it is not required to know strong convexity constants to set the step sizes.
In the general convex case, the method has $\mathcal{O}(1/\sqrt{k})$ rate for a randomly selected iterate.
As argued in Section~\ref{sec: as}, main limitation of~\cite{fercoq2019coordinate} is that small step sizes are required when matrix $A$ is dense.
Moreover, the results in this paper are restricted to uniform probability law for selecting coordinates.

One of the most related works to ours, and a building block of PURE-CD is TriPD-BC from~\cite{latafat2019new}.
The authors showed almost sure convergence of the iterates and linear convergence under metric subregularity, by using global Lipschitz constants of $f$ for the step sizes. This paper did not have any sublinear convergence rates in the general convex case.
Similar to~\cite{fercoq2019coordinate}, TriPD-BC is designed for sparse setting and a naive implementation in the dense setting requires the same per iteration cost as the deterministic algorithm.
An efficient implementation is by duplication of dual variables, which as explained in~\cite{fercoq2019coordinate} results in small step sizes.

Another building block of PURE-CD is SPDHG by~\cite{chambolle2018stochastic}, to solve~\eqref{eq: prob_temp} when $f=0$.
Linear convergence result of SPDHG by~\cite{chambolle2018stochastic} is similar to~\cite{zhang2017stochastic} and requires setting step sizes with strong convexity constants.
In the general convex case and partially strongly convex case,~\cite{chambolle2018stochastic} proved optimal sublinear rates.
Recently,~\cite{alacaoglu2019convergence} analyzed SPDHG and proved additional theoretical results.
In particular, this work showed almost sure convergence of the iterates of SPDHG and linear convergence under metric subregularity.
Even though it is fast in the dense setting, the main limitation of SPDHG, as discussed in Section~\ref{sec: as} is that it needs to update all the dual coordinates, resulting in high per iterations costs in the sparse setting.

Similar algorithms are proposed in~\cite{luke2018block,alacaoglu2017smooth} where the authors focused on the linearly constrained problems and proved sublinear rates.

\section{Numerical experiments}\label{sec: numexp}
\subsection{Effect of sparsity}\label{sec:sparse}
As explained in Section~\ref{sec: as}, and Remark~\ref{rm: rm1}, PURE-CD brings together the benefits of different methods that are designed for dense and sparse cases.
We will now compare the empirical performance of PURE-CD with Vu-Condat-CD from~\cite{fercoq2019coordinate} which has desirable properties with sparse data and SPDHG from~\cite{chambolle2018stochastic} which has desirable properties with dense data.

We select uniform sampling, $p_i = 1/n$, so~\eqref{eq: ss_choice} simplifies to
\begin{align}\label{eq: ss_rule_exp}
\tau_i < \frac{1}{\sum_{j=1}^m \theta_j \sigma_j A_{j, i}^2}.
\end{align}
We provide a step size policy inspired by the step size rules chosen in~\cite{chambolle2018stochastic} and~\cite{fercoq2019coordinate}.
We use the following step sizes, for $\gamma < 1$
\begin{align}
\sigma_j = \frac{1}{\theta_j \max_{i'}\|A_{i'}\|}, ~~~~ \tau_i = \frac{\gamma \max_{i'} \|A_{i'} \|}{\| A_i \|^2}.\notag
\end{align}
We note that in contrast to~\cite{chambolle2018stochastic}, step sizes are both diagonal. In our case, it is important to utilize diagonal step sizes for both primal and dual variables since we perform coordinate-wise updates for both primal and dual variables and the step sizes need to be set appropriately to obtain good practical performance.
For SPDHG and Vu-Condat-CD, we use step sizes suggested in the papers.

In the edge cases (one nonzero element per row or fully dense), it is easy to see that our step size policy reduces to the suggested step sizes of~\cite{chambolle2018stochastic} and~\cite{fercoq2019coordinate}.

For experiments, we used the generic coordinate descent solver, implemented in Cython, by~\citet{fercoq2019generic}, which includes an implementation of Vu-Condat-CD with duplication and we implemented SPDHG and PURE-CD.
We solve Lasso and ridge regression, where we let
$g(x) = \lambda \|x\|_1$, $h(Ax) = \frac{1}{2} \| Ax-b\|^2$, $f=0$, and $g(x) = \frac{\lambda}{2} \|x\|^2$, $h(Ax) = \frac{1}{2} \| Ax-b\|^2$, $f=0$, respectively, in our template~\eqref{eq: prob_temp}. Then, we apply all the methods to the dual problems of these, to access data by rows.

\begin{figure*}[ht]
\begin{center}
\includegraphics[width=1.8\columnwidth]{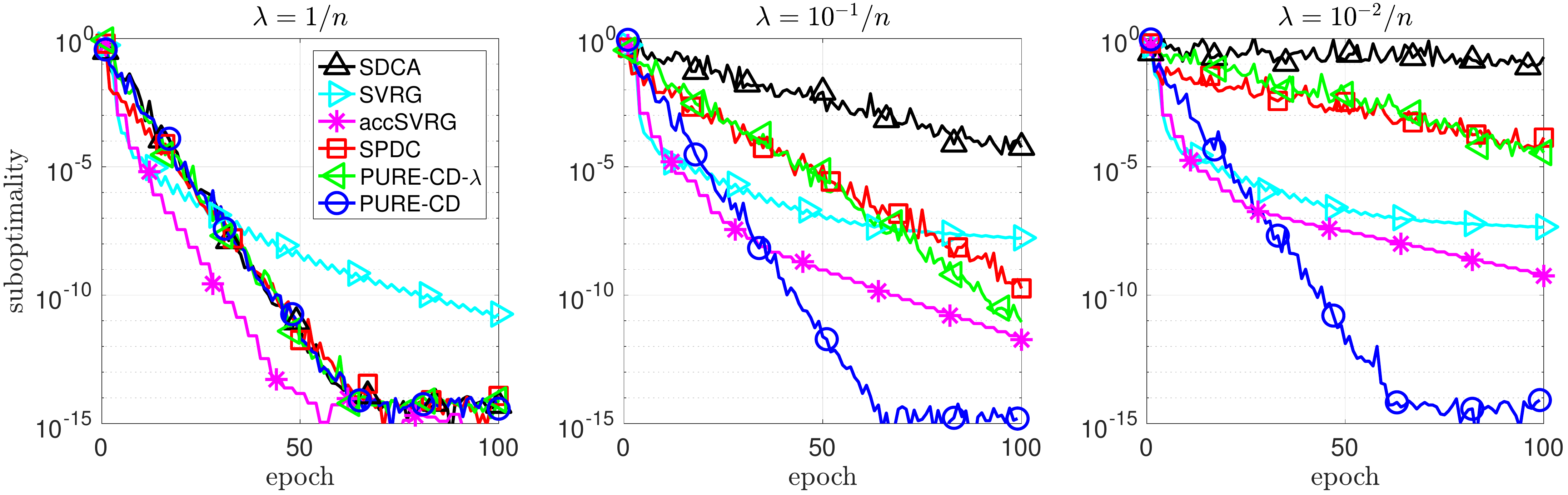}
\end{center}
\vskip -0.2in
\end{figure*}
\begin{figure*}[ht]
\begin{center}
\includegraphics[width=1.8\columnwidth]{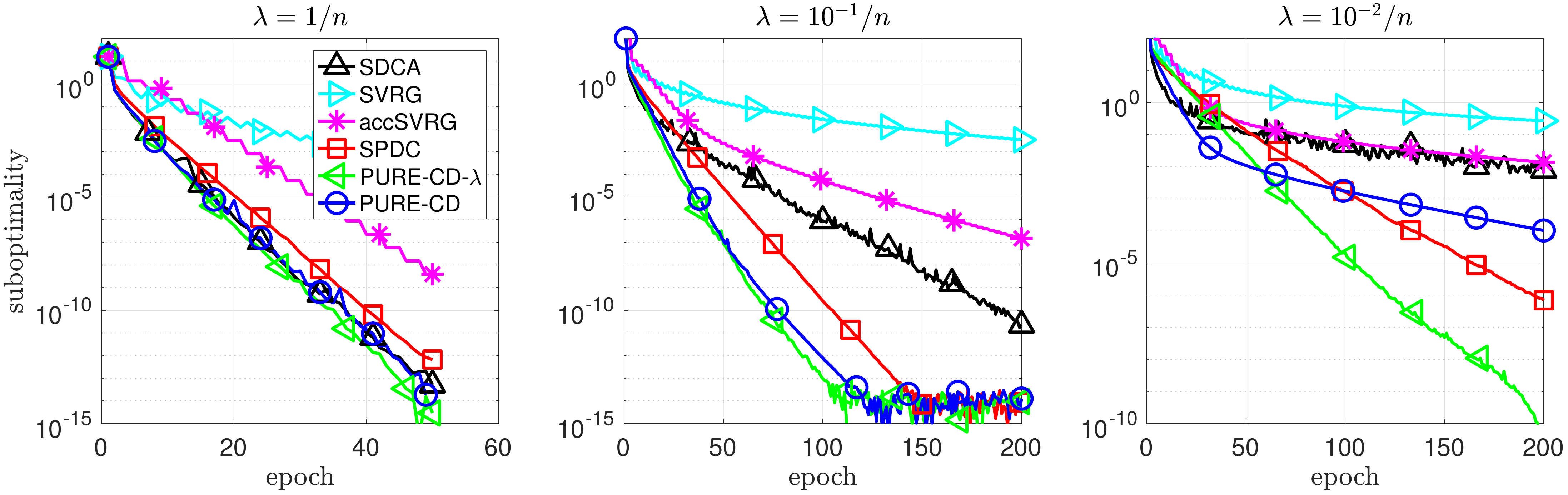}
\caption{top: a9a, $n=32,561$, $m=123$, bottom: sector, $n=6,412$, $m=55,197$.}
\label{fig:3}
\end{center}
\vskip -0.2in
\end{figure*}

We use datasets from LIBSVM with different sparsity levels~\cite{CC01a}. The properties of each data matrix are given in the caption of the corresponding figures.
For preprocessing, we removed all-zero rows and all-zero columns of $A$ and we performed row normalization.
The results are compiled in~\Cref{fig:1,fig:2}.

We observe the behavior predicted by theory. With sparse data such as rcv1, where density level is $0.16\%$, SPDHG makes very little progress in the given time window. The reason is that the per iteration cost of SPDHG in this case is updating $47,236$ dual variables, whereas for PURE-CD and Vu-Condat-CD, the cost is updating $75$ dual variables.
We note that PURE-CD is faster than Vu-Condat-CD due to better step sizes.
On the other hand, with moderate sparsity, SPDHG and Vu-Condat-CD is comparable, whereas PURE-CD exhibits the best performance.
For denser data, SPDHG and PURE-CD exhibit similar behavior where Vu-Condat-CD is slower than both due to smaller step sizes.

\subsection{Comparison with specialized methods}
In this section, we compare the practical performance of PURE-CD with state-of-the-art algorithms that are designed for strongly convex-strongly concave problems.
Due to space constraints, we defer some of the plots and more details about experiments to the appendix.
We focus on the problem
$
\min_x \frac{1}{n} \sum_{i=1}^n h_i(A_i x) + \frac{\lambda}{2} \| x \|^2,
$
where $h_i(x) = (x - b_i)^2$.
Each $h_i$ is smooth with Lipschitz constants $L_i = 2$ and the second component is strongly convex.
This is equivalent to strong convexity in both primal and dual problems.

In this case, the algorithms SDCA~\cite{shalev2013stochastic}, ProxSVRG~\cite{xiao2014proximal}, Accelerated SVRG~\cite{zhou2018simple}, SPDC~\cite{zhang2017stochastic} are all designed to use the strong convexity to obtain linear convergence.
These algorithms use the strong convexity constant $\lambda$ for setting the algorithmic parameters.
Moreover, as all the abovementioned algorithms have special implementations to exploit sparsity, we make the comparison with respect to number of passes of the data, rather than time.
The results are compiled for two datasets in~\Cref{fig:3} and more datasets are included in~\Cref{app: exp}.

$\bullet$ PURE-CD-$\lambda$: This variant uses the non-agnostic step sizes, using $\lambda$, which still satisfy the theoretical requirement~\eqref{eq: ss_rule_exp}.
\begin{align}
\sigma_j = \frac{n}{\theta_j \sqrt{n\lambda} \max_{i'}\|A_{i'}\|}, ~~~~ \tau_i = \frac{\gamma\sqrt{n\lambda} \max_{i'} \|A_{i'} \|}{n\| A_i \|^2}.\notag
\end{align}
$\bullet$ PURE-CD: This variant is with the standard agnostic step sizes. We note that the step sizes are scaled by $n$ since the problem is scaled by $1/n$, compared to~\Cref{sec:sparse}.
\begin{align}
\sigma_j = \frac{n}{\theta_j \max_{i'}\|A_{i'}\|}, ~~~~ \tau_i = \frac{\gamma \max_{i'} \|A_{i'} \|}{n \| A_i \|^2}.\notag
\end{align}
We observe that PURE-CD has a consistent linear convergence behavior as predicted by theory.
In most of the datasets (see~\Cref{app: exp}), it has the fastest convergence behavior.
However, in some datasets, as $\lambda$ gets smaller, we observed that the linear rate of PURE-CD slowed down, which motivated us to try PURE-CD-$\lambda$, which incorporates the knowledge of $\lambda$ as the other methods.
It seems to show favorable behavior when PURE-CD slows down.

The takeaway message is that PURE-CD, which is designed for a general problem, adapts to strong convexity well with agnostic step sizes in most cases.
However, in some cases, it does not perform as good as the algorithms which are designed to exploit strong convexity.
In those cases however one can choose separating step sizes of PURE-CD accordingly, and use PURE-CD-$\lambda$ to get better performance.

\section*{Acknowledgements}
This project has received funding from the European Research Council (ERC) under the European Union's Horizon $2020$ research and innovation programme (grant agreement no $725594$ - time-data) and the Swiss National Science Foundation (SNSF) under grant number $200021\_178865 / 1$.

\bibliography{purecd}
\bibliographystyle{icml2020}

\appendix
\onecolumn
\allowdisplaybreaks

\newpage
\section{More experimental results}\label{app: exp}
In this section, we compare the practical performance of PURE-CD with state-of-the-art algorithms that are designed to exploit problem structures.
In particular, we focus on the problem
\begin{equation}\label{eq: app_exp}
\min_x \frac{1}{n} \sum_{i=1}^n h_i(A_i x) + \frac{\lambda}{2} \| x \|^2,
\end{equation}
where $h_i(x) = (x - b_i)^2$.
Each $h_i$ is smooth with Lipschitz constants $L_i = 2$ and the second component is strongly convex.
This is equivalent to strong convexity in both primal and dual problems.

In this case, the algorithms SDCA, SVRG, Accelerated SVRG/Katyusha, SPDC are all designed to use the strong convexity to obtain linear convergence.
These algorithms use the strong convexity constant $\lambda$ for setting the algorithmic parameters (with the exception of SVRG which theoretically needs it to set number of inner loop iterations).
Moreover, as all the abovementioned algorithms have special implementations to exploit sparsity, for fairness, as all algorithms have different structures, we did not try to implement them in the most efficient manner, therefore we make the comparison with respect to number of passes of the data, rather than time.

We use PURE-CD with the agnostic step size and also with a non-agnostic step size that uses $\lambda$.
Both step size rules are supported by theory.
Moreover, similar to~\Cref{sec: numexp}, we apply PURE-CD to the dual problem of~\eqref{eq: app_exp} to access the data row-wise as other methods.

The details of parameters for each algorithm:\\
$\bullet$ SVRG: We use the theoretical step size which is chosen as $\frac{1}{4\max_i L_i}$.~\citep[Theorem 3.1]{xiao2014proximal} \\[2mm]
$\bullet$ Accelerated SVRG/Katyusha: We use the theoretically suggested step size parameter and acceleration parameter~\citep[Theorem 1, Table 2]{zhou2018simple}\\[2mm]
$\bullet$ SDCA: We use directly the specialization of SDCA for ridge regression, as decribed in~\citep[Section 6.2]{shalev2013stochastic}\\[2mm]
$\bullet$ SPDC: We use the step sizes from~\citep[Theorem 1]{zhang2017stochastic}\\[2mm]
$\bullet$ PURE-CD-$\lambda$: This variant uses the non-agnostic step sizes, using $\lambda$. We note that the step sizes satisfy the theoretical requirement~\eqref{eq: ss_rule_exp}.
\begin{align}
\sigma_j = \frac{n}{\theta_j \sqrt{n\lambda} \max_{i'}\|A_{i'}\|}, ~~~~ \tau_i = \frac{\gamma\sqrt{n\lambda} \max_{i'} \|A_{i'} \|}{n\| A_i \|^2}.\notag
\end{align}
$\bullet$ PURE-CD: This variant is with the standard agnostic step sizes, as in Section~\ref{sec: numexp}. We note that the step sizes are scaled by $n$ since the problem is scaled by $1/n$, compared to Section~\ref{sec: numexp}.
\begin{align}
\sigma_j = \frac{n}{\theta_j \max_{i'}\|A_{i'}\|}, ~~~~ \tau_i = \frac{\gamma \max_{i'} \|A_{i'} \|}{n \| A_i \|^2}.\notag
\end{align}

We use datasets from LIBSVM, and try three different regularization parameters $\frac{1}{n}$, $\frac{10^{-1}}{n}$, and $\frac{10^{-2}}{n}$.
We performed preprocessing by removing the all-zero rows and columns from the data matrix, then we normalized row norms of $A$.
We chose the parameters as described above and did not perform any tuning for any algorithm.

We observe that PURE-CD has a consistent linear convergence behavior as predicted by theory.
In most of the datasets, it has the fastest convergence behavior.
However, in some datasets, as $\lambda$ gets smaller, we observed that the linear rate of PURE-CD slowed down, which motivated us to try PURE-CD-$\lambda$, which incorporates the knowledge of $\lambda$ as the other methods.
It seems to show favorable behavior when PURE-CD slows down.

The takeaway message is that PURE-CD adapts very well with agnostic step sizes in most cases.
However, in some cases, it does not perform as good as the algorithms which are designed to exploit structure.
In those cases however one can choose separating step sizes of PURE-CD accordingly, and use PURE-CD-$\lambda$ to get better performance.

\begin{figure}[H]
\begin{center}
\includegraphics[width=0.9\columnwidth]{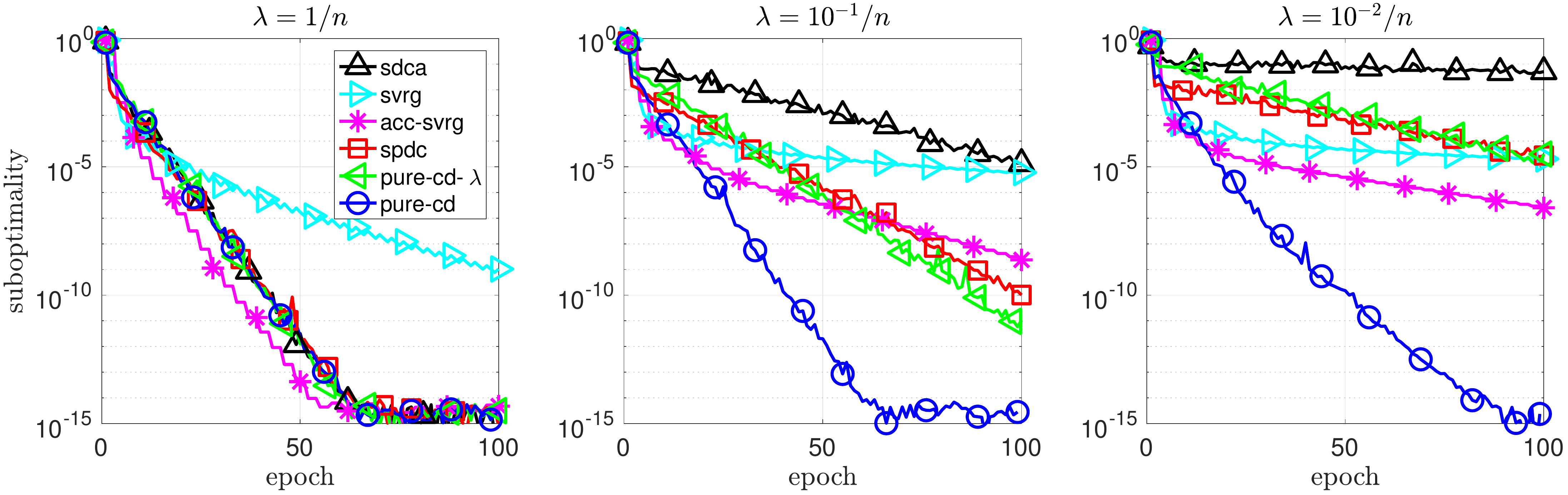}
\caption{w8a, $n=49,749$, $m=300$.}
\includegraphics[width=0.9\columnwidth]{a9a_ridge_ml.pdf}
\caption{a9a, $n=32,561$, $m=123$.}
\includegraphics[width=0.9\columnwidth]{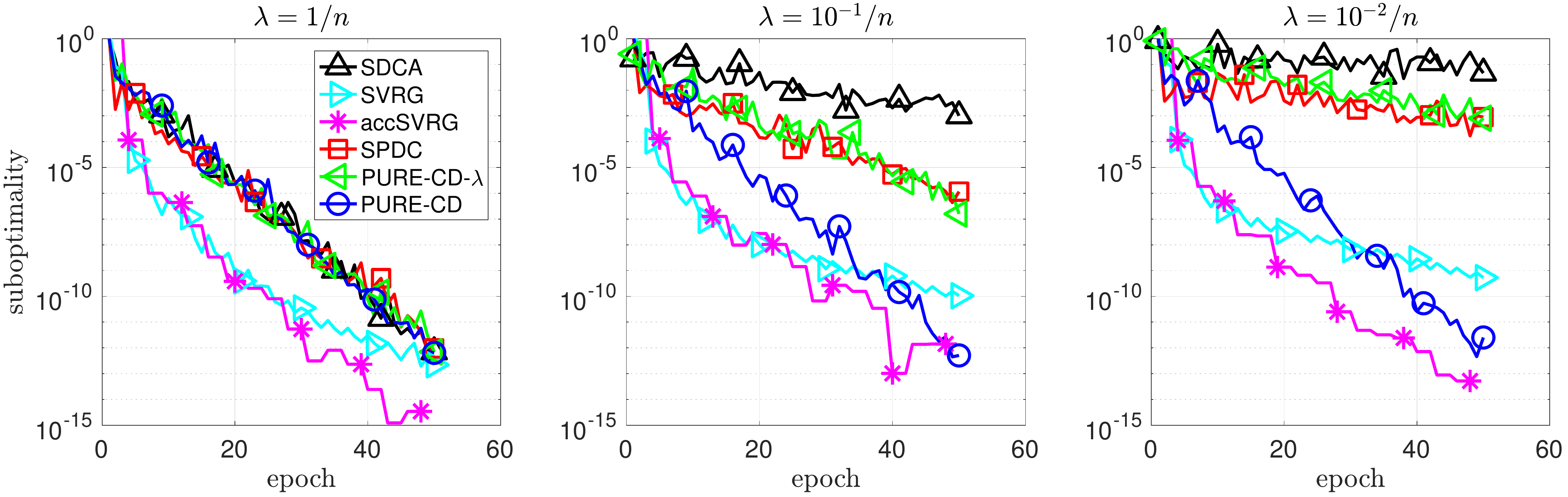}
\caption{covtype, $n=581,012$, $m=54$.}
\includegraphics[width=0.9\columnwidth]{sec_ridge_ml.pdf}
\caption{sector, $n=6,412$, $m=55,197$.}
\end{center}
\end{figure}
\begin{figure}[H]
\begin{center}
\includegraphics[width=0.9\columnwidth]{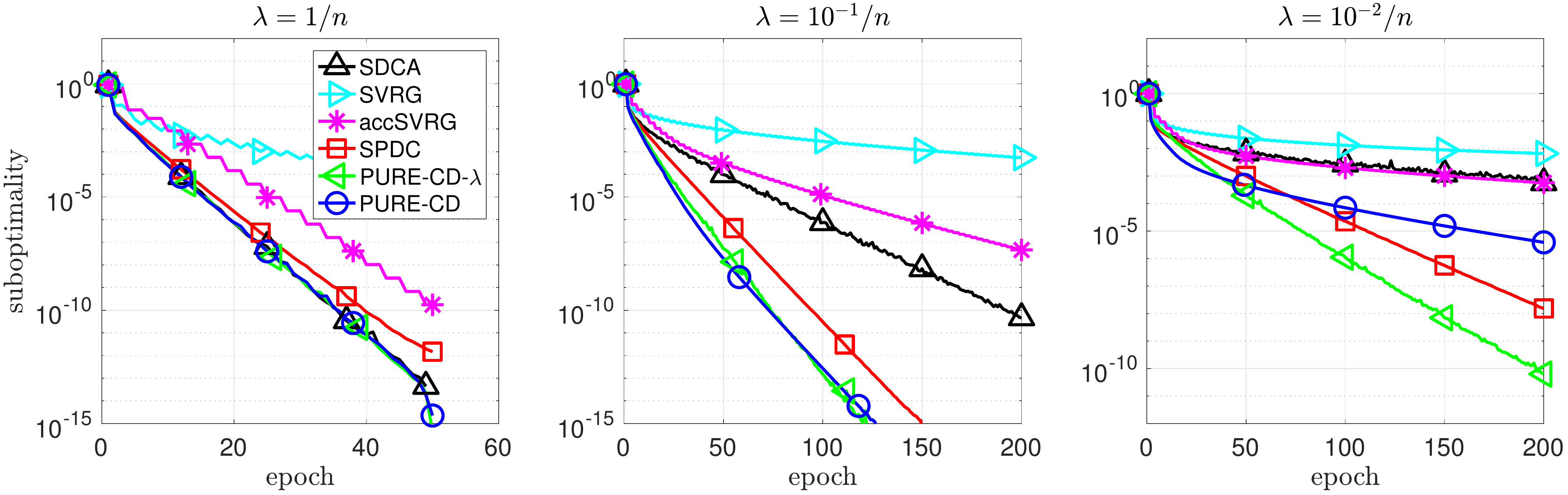}
\caption{rcv1.binary, $n=20,242$, $m=47,236$.}
\includegraphics[width=0.9\columnwidth]{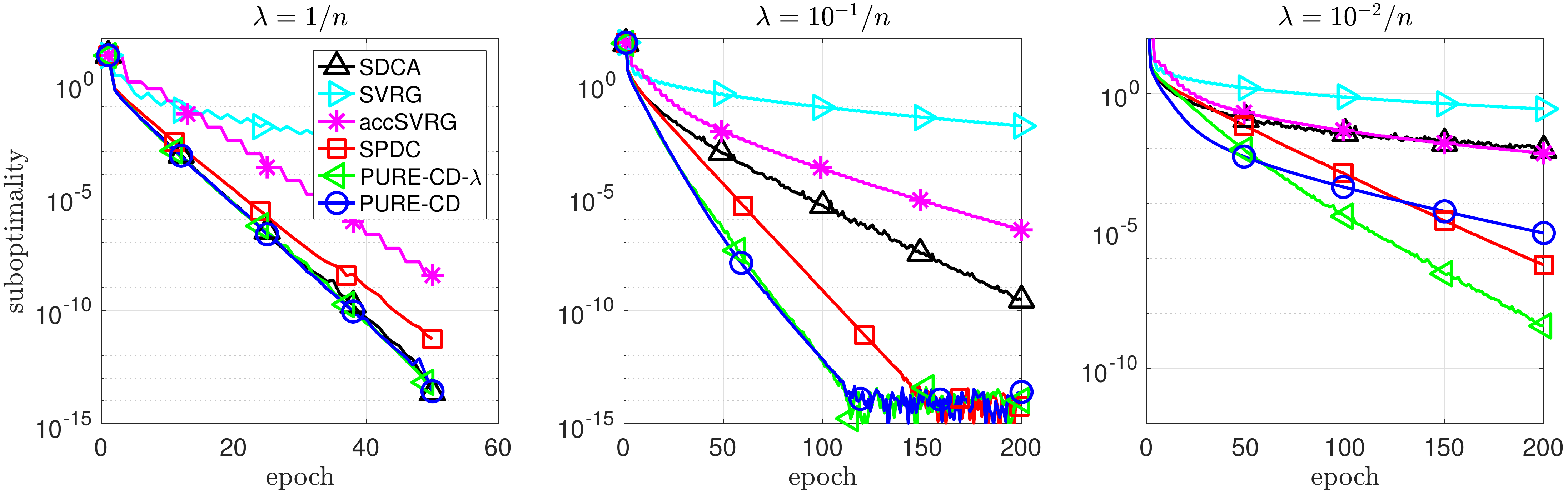}
\caption{news20, $n=15,935$, $m=62,061$.}
\includegraphics[width=0.9\columnwidth]{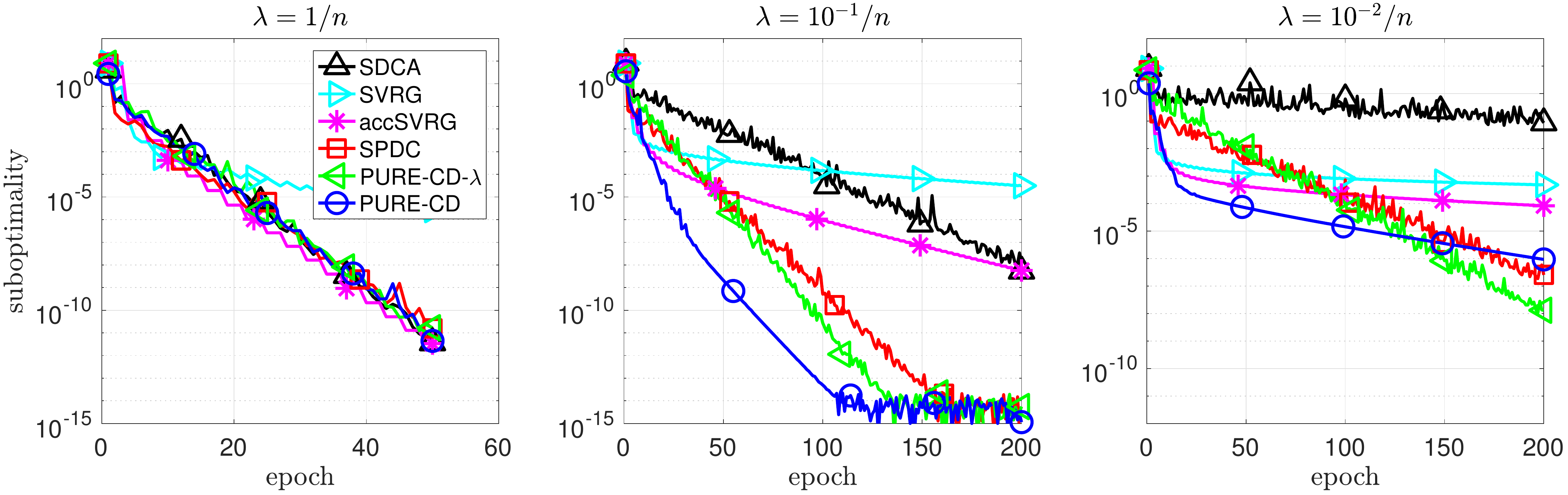}
\caption{mnist, $n=60,000$, $m=780$.}
\includegraphics[width=0.9\columnwidth]{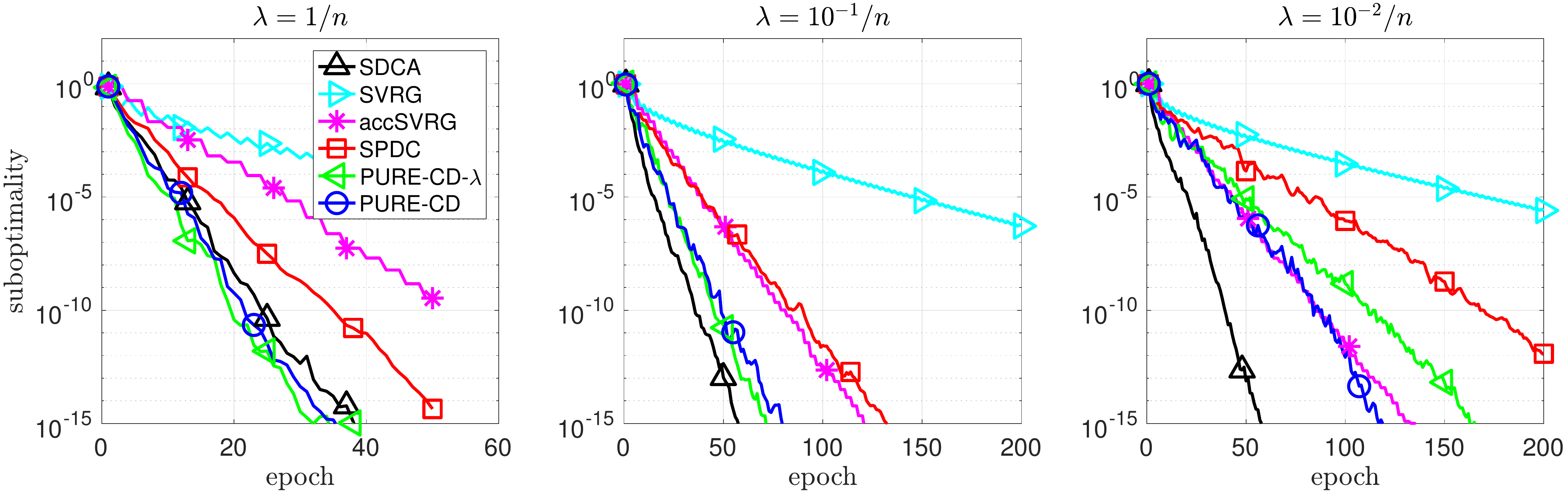}
\caption{leukemia, $n=38$, $m=7,129$.}
\end{center}
\end{figure}
\begin{figure}[H]
\begin{center}
\includegraphics[width=0.9\columnwidth]{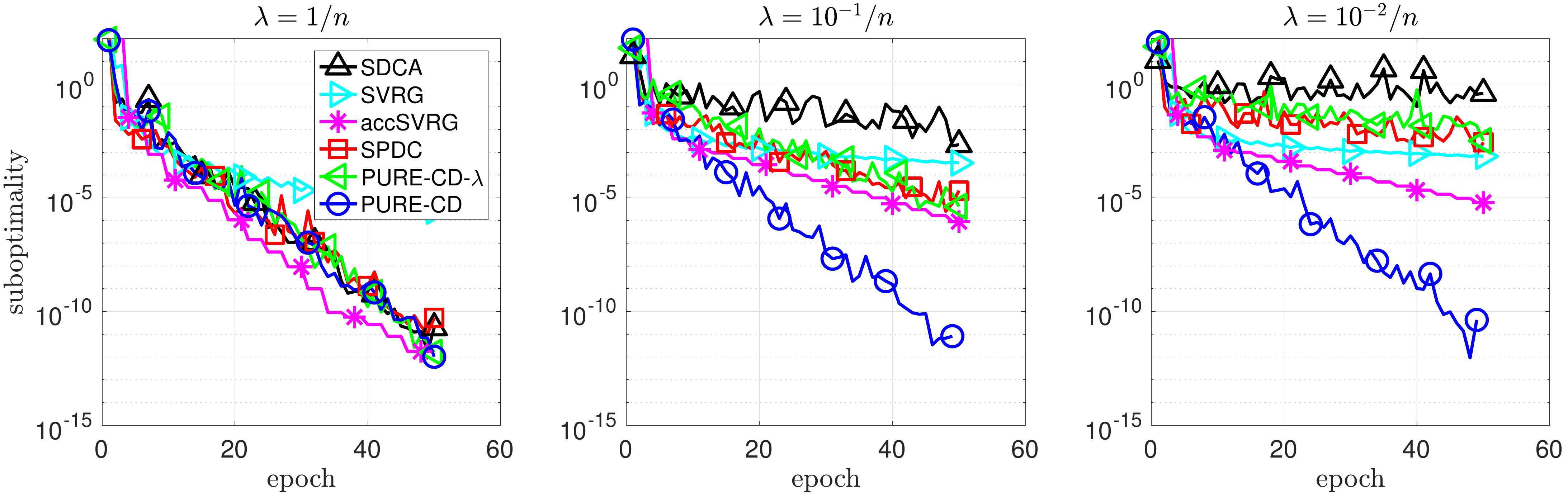}
\caption{YearPredictionMSD, $n=463,715$, $m=90$.}
\end{center}
\end{figure}

\subsection{Further details about experiments of~\Cref{sec: numexp}}
For preprocessing, we removed all-zero rows and all-zero columns of $A$ and we performed row normalization.
The experiments are done on a computer with Intel Core i7 CPUs at 2.9 GHz.

\newpage
\section{Proofs}\label{sec: proofs}
\subsection{Proofs for one iteration result}
We start with some technical lemmas.
Our first result characterizes the conditional expectation of $y_{k+1}$.

\begin{lemma}\label{lem: y_lem}
Let $y_{k+1}$ as defined in Algorithm~\ref{alg:stripd}, and recall that $\pi = \diag(\pi_1, \dots, \pi_m)$, such that $\pi_j = \sum_{i\in I(j)} p_i, \forall j \in \{ 1, \dots, m \}$.
Then it holds that for any $\mathcal{F}_k$-measurable $Y$ and $\forall \gamma = \{ \gamma_1, \dots, \gamma_m \}$, with $\gamma_i > 0$,
\begin{multline}
\mathbb{E}_k \left[ \| y_{k+1} - Y \|^2_{\gamma} \right] = \| \bar{y}_{k+1} - Y \|^2_{\gamma \pi} - \| y_k - Y \|^2_{\gamma\pi} +\| y_k - Y \|^2_{\gamma} + 2\langle \bar{y}_{k+1} - Y, \gamma \sigma \theta A P(\bar{x}_{k+1} - x_k) \rangle \\
+ \sum_{i=1}^n \sum_{j=1}^m p_i \gamma_j \sigma_j^2 \theta_j^2 A_{j, i}^2  (\bar{x}_{k+1}^i - x_k^i)^2.\notag
\end{multline}
\end{lemma}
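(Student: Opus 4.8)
The plan is to condition on the filtration $\mathcal{F}_k$, under which $\bar{x}_{k+1}$ and $\bar{y}_{k+1}$ are deterministic — they are computed from $x_k,y_k$ \emph{before} the coordinate is drawn — so that the only randomness in $y_{k+1}$ comes from the draw of $i_{k+1}$. First I would write $\|y_{k+1}-Y\|^2_\gamma=\sum_{j=1}^m \gamma_j\|y_{k+1}^j-Y^j\|^2$ conditionally on the event $\{i_{k+1}=i\}$. On this event the primal increment $\bar{x}_{k+1}-x_k$ is supported on block $i$, hence $(A(x_{k+1}-x_k))_j=A_{j,i}(\bar{x}_{k+1}^i-x_k^i)$, and the dual update of Algorithm~\ref{alg:stripd} splits the coordinates into those with $j\in J(i)$, where $y_{k+1}^j=\bar{y}_{k+1}^j+\sigma_j\theta_j A_{j,i}(\bar{x}_{k+1}^i-x_k^i)$, and those with $j\notin J(i)$, where $y_{k+1}^j=y_k^j$. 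For each $j\in J(i)$ I expand the square into a base term $\gamma_j\|\bar{y}_{k+1}^j-Y^j\|^2$, a cross term $2\gamma_j\sigma_j\theta_j\langle\bar{y}_{k+1}^j-Y^j,\,A_{j,i}(\bar{x}_{k+1}^i-x_k^i)\rangle$, and a quadratic term $\gamma_j\sigma_j^2\theta_j^2 A_{j,i}^2(\bar{x}_{k+1}^i-x_k^i)^2$.

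Next I take the conditional expectation $\mathbb{E}_k[\cdot]=\sum_{i=1}^n p_i(\cdot)$ and treat the three groups separately. For the base terms I use the equivalence $j\in J(i)\iff i\in I(j)$ to swap the order of summation: $\sum_i p_i\sum_{j\in J(i)}\gamma_j\|\bar{y}_{k+1}^j-Y^j\|^2=\sum_j\gamma_j\|\bar{y}_{k+1}^j-Y^j\|^2\sum_{i\in I(j)}p_i=\|\bar{y}_{k+1}-Y\|^2_{\gamma\pi}$, using $\sum_{i\in I(j)}p_i=\pi_j$. Since $j\notin J(i)\iff i\notin I(j)$ and $\sum_{i=1}^n p_i=1$, the complementary base terms give $\sum_i p_i\sum_{j\notin J(i)}\gamma_j\|y_k^j-Y^j\|^2=\sum_j\gamma_j(1-\pi_j)\|y_k^j-Y^j\|^2=\|y_k-Y\|^2_\gamma-\|y_k-Y\|^2_{\gamma\pi}$. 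Together these account for the first three terms of the claim.

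For the remaining two groups the crucial simplification is that $A_{j,i}=0$ whenever $j\notin J(i)$, so the inner sums over $J(i)$ may be extended to all $j\in\{1,\dots,m\}$ at no cost. The cross terms then read $2\sum_j\gamma_j\sigma_j\theta_j\big\langle\bar{y}_{k+1}^j-Y^j,\,\sum_i p_i A_{j,i}(\bar{x}_{k+1}^i-x_k^i)\big\rangle$, and recognizing $\sum_i p_iA_{j,i}(\bar{x}_{k+1}^i-x_k^i)=(AP(\bar{x}_{k+1}-x_k))_j$ collapses them into $2\langle\bar{y}_{k+1}-Y,\,\gamma\sigma\theta AP(\bar{x}_{k+1}-x_k)\rangle$; the quadratic terms become the stated double sum directly. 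Assembling the three groups yields the identity. The computation is elementary once we condition on $\mathcal{F}_k$, so the only real obstacle is the combinatorial bookkeeping with the sparsity index sets: one must verify that the two distinct base points ($\bar{y}_{k+1}^j$ on $J(i)$ versus $y_k^j$ off it) recombine, after expectation, into the telescoping pair $\|y_k-Y\|^2_\gamma-\|y_k-Y\|^2_{\gamma\pi}$ together with $\|\bar{y}_{k+1}-Y\|^2_{\gamma\pi}$, and that the vanishing entries of $A$ are exactly what lets the supports $J(i)$ be replaced by full sums over $j$, producing the column-probability weighting $P$ in the cross and quadratic terms.
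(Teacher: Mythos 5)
Your proposal is correct and follows essentially the same route as the paper's proof: condition on the drawn index $i_{k+1}=i$, split coordinates by membership in $J(i)$, expand the square into base, cross, and quadratic terms, swap the summation order via $j\in J(i)\iff i\in I(j)$ to produce the $\gamma\pi$-weighted norms, and exploit $A_{j,i}=0$ for $j\notin J(i)$ to extend the cross and quadratic sums over all $j$, yielding the $AP$ factor. All the key identities you state match the paper's derivation, so there is nothing to add.
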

\begin{proof}
First, we note that for $\mathcal{F}_k$-measurable $Y$, it follows that
\begin{align}
\mathbb{E}_k\left[ \| y_{k+1} - Y \|^2_{\gamma} \right] &= \mathbb{E}_k \left[ \sum_{j=1}^m \gamma_j \left( y_{k+1}^j - Y^j \right)^2 \right] \notag\\
&= \mathbb{E}_k \left[ \sum_{j\in J(i_{k+1})} \gamma_j (\bar{y}_{k+1}^j + \sigma_j\theta_j (A(x_{k+1} - x_k))_j - Y^j)^2 + \sum_{j \not\in J(i_{k+1})} \gamma_j (y_k^j - Y^j)^2 \right] \notag\\
&= \sum_{i=1}^n p_i \left[ \sum_{j\in J(i)}\gamma_j \left( \bar{y}_{k+1}^j + \sigma_j \theta_jA_{j, i} (\bar{x}_{k+1}^i - x_k^i) - Y^j \right)^2 + \sum_{j\notin J(i)} \gamma_j\left( y_k^j - Y^j \right)^2 \right] \notag\\
&= \sum_{i=1}^n \sum_{j\in J(i)} p_i \gamma_j (\bar{y}_{k+1}^j - Y^j)^2 + 2\sum_{i=1}^n \sum_{j\in J(i)} p_i \gamma_j \sigma_j \theta_j A_{j, i} (\bar{x}_{k+1}^i - x_k^i) (\bar{y}_{k+1}^j - Y^j) \notag\\
&+ \sum_{i=1}^n \sum_{j\in J(i)} p_i \gamma_j \left( \sigma_j \theta_jA_{j, i}  (\bar{x}_{k+1}^i - x_k^i) \right)^2 + \sum_{i=1}^n \sum_{j\notin J(i)} p_i \gamma_j (y_k^j - Y^j)^2\label{eq: yk_exp_first}
\end{align}
where for the third equality, we used the fact that $x_{k+1}$ is different from $x_k$ only on the coordinate $i_{k+1}$, which gives
\begin{equation}
(A(x_{k+1}-x_k))_j = (A(x_{k+1}^{i_{k+1}}-x_k^{i_{k+1}})e_{i_{k+1}})_j = A_{j, i_{k+1}}(\bar{x}_{k+1}^{i_{k+1}}-x_k^{i_{k+1}}).\notag
\end{equation}
We focus on the first term on the right hand side of~\eqref{eq: yk_exp_first}
\begin{align}
\sum_{i=1}^n\sum_{j\notin J(i)} p_i \gamma_j (y_k^j - Y^j)^2 &= \sum_{i=1}^n \sum_{j=1}^n p_i \gamma_j (y_k^j - Y^j)^2 - \sum_{i=1}^n \sum_{j\in J(i)} p_i \gamma_j (y_k^j - Y^j)^2\notag\\
&= \| y_k - Y \|^2_{\gamma} - \sum_{j=1}^m \sum_{i\in I(j)} p_i \gamma_j (y_k^j - Y^j)^2 = \| y_k - Y \|^2_{\gamma} - \sum_{j=1}^m \pi_j \gamma_j (y_k^j - Y^j)^2 \notag\\
&=\| y_k - Y \|^2_{\gamma} - \|y_k - Y\|^2_{\gamma \pi}\label{eq: yk_exp_rhs1}
\end{align}
where we use the fact that $\sum_{i=1}^n \sum_{j\in J(i)} \gamma'_{j, i} = \sum_{j=1}^m \sum_{i\in I(j)} \gamma'_{j, i}$, for any $\gamma'$, due to the definition of $J(i)$, $I(j)$ and $\pi_j = \sum_{i\in I(j)} p_i$.

We estimate the last term of~\eqref{eq: yk_exp_first}, similar to~\eqref{eq: yk_exp_rhs1}
\begin{align}
\sum_{i=1}^n \sum_{j\in J(i)} p_i \gamma_j (\bar{y}_{k+1}^j - Y^j)^2 = \sum_{j=1}^m \sum_{i\in I(j)} p_i \gamma_j (\bar{y}_{k+1}^j - Y^j)^2 = \| \bar{y}_{k+1} - Y \|^2_{\gamma \pi}.\label{eq: yk_exp_rhs4}
\end{align}
We lastly estimate the second and third terms of~\eqref{eq: yk_exp_first}. We use the fact that $A_{j, i} = 0$, if $j\notin J(i)$ to obtain
\begin{multline}
2\sum_{i=1}^n\sum_{j\in J(i)} p_i \gamma_j \sigma_j\theta_j A_{j, i}  (\bar{x}_{k+1}^i - x_k^i) (\bar{y}_{k+1}^j - Y^j) + \sum_{i=1}^n \sum_{j\in J(i)} p_i \gamma_j \left( \sigma_j \theta_j A_{j, i} (\bar{x}_{k+1}^i - x_k^i) \right)^2 \\
=2\sum_{i=1}^n \sum_{j=1}^m p_i \gamma_j \sigma_j \theta_j A_{j, i}  (\bar{x}_{k+1}^i - x_k^i) (\bar{y}_{k+1}^j - Y^j) + \sum_{i=1}^n \sum_{j=1}^m p_i \gamma_j \left( \sigma_j \theta_j A_{j, i} (\bar{x}_{k+1}^i - x_k^i) \right)^2 \\
=2 \langle \bar{y}_{k+1} - Y, \gamma \sigma\theta A P(\bar{x}_{k+1} - x_k) \rangle + \sum_{i=1}^n\sum_{j=1}^m p_i\gamma_j \left( \sigma_j \theta_j A_{j, i}  (\bar{x}_{k+1}^i - x_k^i) \right)^2.\label{eq: yk_exp_rhs23}
\end{multline}
We use~\eqref{eq: yk_exp_rhs1},~\eqref{eq: yk_exp_rhs4}, and~\eqref{eq: yk_exp_rhs23} in~\eqref{eq: yk_exp_first} to obtain the final result.
\end{proof}

We continue with the following lemma which handles necessary manipulations for the terms involving the primal variable, to handle arbitrary probabilities.

\begin{lemma}\label{lem: x_lem}
We recall that $P=\diag(p_1, \ldots, p_n)$, $\underline p = \min_{i} p_i$, and define
\begin{equation}
x' = x_k + P^{-1}\underline p (x - x_k) = P^{-1}\underline p x + (1-P^{-1}\underline p) x_k,\notag
\end{equation}
and
\begin{equation}
g_P(x) = \sum_{i=1}^n p_i g_i(x).\notag
\end{equation}
Then, for a function $g(x)=\sum_{i=1}^n g_i(x_i)$, the following conclusions hold:
\begin{equation}
g_p(x') \leq \underline p g(x) - \underline p g(x_k) + g_p (x_k),\notag
\end{equation}
\begin{align}
\| x' - {x}_{k+1} \|^2_{\tau^{-1}} &= \underline p\| x - {x}_{k+1}\|^2_{\tau^{-1}P^{-1}} +  \| {x}_{k+1} - x_k \|^2_{\tau^{-1}} - \underline p \| {x}_{k+1} - x_k\|^2_{\tau^{-1}P^{-1}}  \notag\\
&-\underline p \|x-x_k\|^2_{\tau^{-1}P^{-1}} + \underline p^2 \| x- x_k \|^2_{\tau^{-1}P^{-2}},\notag
\end{align}
\begin{equation}
-\| x' - x_k \|^2_{\tau^{-1}} = -\underline p^2 \| x - x_k \|^2_{\tau^{-1}P^{-2}}.\notag
\end{equation}
\end{lemma}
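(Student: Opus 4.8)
The plan is to handle the three claims separately, all resting on the single structural observation that the $i$-th block of $x'$ is
\[
x'_i = \frac{\underline p}{p_i}\, x_i + \Bigl(1 - \frac{\underline p}{p_i}\Bigr)(x_k)_i,
\]
a genuine convex combination of $x_i$ and $(x_k)_i$, since $\underline p = \min_i p_i$ forces $\underline p/p_i \in (0,1]$. This is the only place the hypothesis on $\underline p$ is really used.

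For the first (inequality) conclusion, I would invoke convexity of each $g_i$ to obtain $g_i(x'_i) \le \frac{\underline p}{p_i} g_i(x_i) + (1 - \frac{\underline p}{p_i}) g_i((x_k)_i)$, then multiply through by $p_i$ to clear denominators and sum over $i$. Collecting $\underline p \sum_i g_i(x_i) = \underline p g(x)$, the term $\sum_i p_i g_i((x_k)_i) = g_P(x_k)$, and $-\underline p \sum_i g_i((x_k)_i) = -\underline p g(x_k)$ yields exactly the stated bound.

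For the third conclusion (the simplest) I note $x' - x_k = P^{-1}\underline p (x - x_k)$, so coordinatewise $(x'-x_k)_i = \frac{\underline p}{p_i}(x - x_k)_i$. Substituting into the diagonal norm $\|\cdot\|^2_{\tau^{-1}} = \sum_i \tau_i^{-1}(\cdot)_i^2$ produces a factor $\underline p^2 p_i^{-2}$ in each summand, which is precisely $\underline p^2 \|x - x_k\|^2_{\tau^{-1}P^{-2}}$, giving the claimed equality after negation. For the second conclusion I would split $x' - x_{k+1} = (x' - x_k) - (x_{k+1} - x_k)$ and expand the $\tau^{-1}$-weighted square as $\|x' - x_k\|^2_{\tau^{-1}} + \|x_{k+1} - x_k\|^2_{\tau^{-1}} - 2\langle x' - x_k, x_{k+1} - x_k\rangle_{\tau^{-1}}$, where $\langle\cdot,\cdot\rangle_{\tau^{-1}}$ is the $\tau^{-1}$-weighted inner product. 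The first term is already evaluated in the third conclusion, and the second term is $\|x_{k+1}-x_k\|^2_{\tau^{-1}}$ verbatim. Pulling $P^{-1}\underline p$ out of the first slot turns the cross term into $2\underline p \langle x - x_k, x_{k+1} - x_k\rangle_{\tau^{-1}P^{-1}}$; applying the polarization identity $2\langle u,v\rangle = \|u\|^2 + \|v\|^2 - \|u - v\|^2$ in the $\tau^{-1}P^{-1}$ norm with $u = x - x_k$, $v = x_{k+1} - x_k$ (so $u - v = x - x_{k+1}$) converts it into the three remaining squared-norm terms, and a final rearrangement matches the target identity.

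I do not expect any serious obstacle: every step is elementary convexity or inner-product algebra, and the main care required is just consistent bookkeeping across the diagonally weighted norms $\tau^{-1}$, $\tau^{-1}P^{-1}$, and $\tau^{-1}P^{-2}$. The one conceptual ingredient is recognizing the convex-combination structure of $x'$ in the first claim, which is exactly where $\underline p = \min_i p_i$ is needed to keep the coefficients in $[0,1]$.
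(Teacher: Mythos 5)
Your proposal is correct and takes essentially the same approach as the paper: convexity of each $g_i$ applied to the convex combination $x'^i = \frac{\underline p}{p_i}x^i + (1-\frac{\underline p}{p_i})x_k^i$ for the first claim, and direct coordinatewise substitution of $x'-x_k = P^{-1}\underline p(x-x_k)$ for the third. For the middle identity the paper applies the weighted-mean identity $\|cu+(1-c)v\|^2 = c\|u\|^2+(1-c)\|v\|^2-c(1-c)\|u-v\|^2$ coordinatewise in one step, whereas you expand $x'-x_{k+1}=(x'-x_k)-(x_{k+1}-x_k)$ and polarize the cross term in the $\tau^{-1}P^{-1}$ metric; these are algebraically equivalent computations and both yield the stated result.
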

\begin{proof}
We have that $x'^i = p_i^{-1}\underline p x^i + (1-p_i^{-1}\underline p)x_k^i$.
It follows by the convexity of $g_i$ that
\begin{equation}
g_p(x') = \sum_{i=1}^n p_i g_i(x'^i) \leq \sum_{i=1}^n \underline p g_i(x) + (p_i - \underline p) g_i(x_k^i) = \underline p g(x) - \underline p g(x_k) + g_p (x_k).\notag
\end{equation}
Moreover, since for any $0\leq c \leq 1$ and any $u, v \in \mathbb{R}$, it is true that $\|cu + (1-c)v\|^2 = c\|u\|^2 + (1-c)\|v \|^2 - c(1-c)\| u- v\|^2$, we obtain
\begin{align}
\| x' - {x}_{k+1} \|^2_{\tau^{-1}} &= \underline p \| x - {x}_{k+1}\|^2_{\tau^{-1}P^{-1}} + \| {x}_{k+1} - x_k \|^2_{\tau^{-1}} - \underline p \| {x}_{k+1} - x_k\|^2_{\tau^{-1}P^{-1}} \notag\\
&-\underline p \|x-x_k\|^2_{\tau^{-1}P^{-1}} + \underline p^2 \| x- x_k \|^2_{\tau^{-1}P^{-2}}.\notag
\end{align}
Lastly, plugging in $x' = x_k + P^{-1}\underline p (x-x_k)$ to $\| x' - x_k\|^2$ gives
\begin{equation*}
-\| x' - x_k \|^2_{\tau^{-1}} =- \underline p^2 \| x - x_k \|^2_{\tau^{-1}P^{-2}}.\qedhere
\end{equation*}
\end{proof}
We are now ready to prove Lemma~\ref{lem: lem1} which describes the one iteration behavior of the algorithm.
We restate the lemma and provide its proof.

\begin{replemma}{lem: lem1}
Under Assumption~\ref{asmp: asmp1}, let $\theta = \diag(\theta_1, \ldots, \theta_m)$ and $\pi = \diag(\pi_1, \ldots, \pi_m)$ be chosen as
\begin{equation}
\theta_j = \frac{\pi_j}{\underline p}, \text{ where } \pi_j = \sum_{i \in I(j)} p_i, \text{ and } \underline p = \min_i p_i.\notag
\end{equation}
We define the functions, given $z=(x, y)$,
\begin{align}
&V(z) = \frac{\underline p}{2} \| x \|^2_{\tau^{-1}P^{-1}} + \frac{\underline p}{2} \| y \|^2_{\sigma^{-1}\pi^{-1}},\notag\\
&\tilde{V}(z) = \frac{\underline p}{2} \| x \|^2_{C(\tau)} + \frac{\underline p}{2} \| y \|^2_{\sigma^{-1}},\notag
\end{align}
where 
$
C(\tau)_i = \frac{2p_i}{\underline p \tau_i} - \frac{1}{\tau_i} - p_i \sum_{j=1}^m \pi_j^{-1}\sigma_j \theta_j^2 A_{j, i}^2 - \frac{\beta_i p_i}{\underline p}$.

Then, for the iterates of Algorithm~\ref{alg:stripd}, $\forall z=(x, y) \in \mathcal{Z}$, it holds that:
\begin{equation}
\mathbb{E}_k \left[ D_p(x_{k+1}; z) \right] +\underline p D_d(\bar{y}_{k+1}; z) + \mathbb{E}_k \left[ V(z_{k+1} - z) \right] \\ \leq (1-\underline p)D_p (x_k; z) + V(z_k - z) - \tilde{V}(\bar{z}_{k+1} - z_k).\notag
\end{equation}
\end{replemma}

\begin{proof}
By the definition of proximal operator and convexity, $\forall x', y$
\begin{align}
p_i g_i(x'^i) &\geq p_ig_i(\bar{x}_{k+1}^i) - p_i \langle \nabla_i f(x_k)+ (A^\top \bar{y}_{k+1})_i, x'^i -\bar{x}_{k+1}^i \rangle \notag \\
&+ \frac{1}{2} \left( \|x_k^i - \bar{x}_{k+1}^i \|^2_{\tau_i^{-1}p_i} + \| x'^i - \bar{x}_{k+1}^i\|^2_{\tau_i^{-1}p_i} - \| x'^i - x_k^i \|^2_{\tau_i^{-1}p_i} \right) \notag\\
\underline{p} h^\ast(y) &\geq \underline{p} h^\ast(\bar{y}_{k+1}) + \underline{p}\langle A x_k, y - \bar{y}_{k+1} \rangle + \frac{\underline{p}}{2} \left( \| y_k - \bar{y}_{k+1} \|^2_{\sigma^{-1}} + \| y - \bar{y}_{k+1}\|^2_{\sigma^{-1}} - \| y - y_k \|^2_{\sigma^{-1}} \right).\notag
\end{align}
We sum the first inequality for $i=1$ to $n$, then add it to the second inequality and use the definition $g_P(x) = \sum_{i=1}^n p_i g_i(x_i)$ to derive
\begin{align}
g_P(x') + \underline{p} h^\ast(y) &\geq g_P(\bar{x}_{k+1}) + \underline{p}h^\ast(\bar{y}_{k+1}) - \langle \nabla f (x_k), P(x'-\bar{x}_{k+1}) \rangle - \langle A^\top\bar{y}_{k+1}, P(x'-\bar{x}_{k+1}) \rangle \notag \\
&+ \underline p \langle Ax_k, y-\bar{y}_{k+1}\rangle+\frac{1}{2} \Big( \| x_k - \bar{x}_{k+1}\|^2_{\tau^{-1}P} + \| x'- \bar{x}_{k+1}\|^2_{\tau^{-1}P} - \| x'- x_k \|^2_{\tau^{-1}P} \Big) \notag\\
&+\frac{\underline p}{2} \left( \| y_k - \bar{y}_{k+1}\|^2_{\sigma^{-1}} + \| y- \bar{y}_{k+1}\|^2_{\sigma^{-1}} - \| y- y_k \|^2_{\sigma^{-1}} \right).\label{eq: eq1_lastline}
\end{align}
First we note that, for $\mathcal{F}_k$-measurable $X$ and any $\gamma = \diag(\gamma_1, \ldots, \gamma_n)$, such that $\gamma_i > 0$, the following hold
\begin{align}
&\mathbb{E}_k [g(x_{k+1})] = g_P(\bar{x}_{k+1}) - g_P(x_k) + g(x_k),\label{eq: gp_exp} \\
&\mathbb{E}_k [x_{k+1}] = P\bar{x}_{k+1} - Px_k + x_k,\notag\\
&\mathbb{E}_k \left[\|x_{k+1} - X \|^2_\gamma\right] = \| \bar{x}_{k+1} - X \|^2_{\gamma P} - \| x_k - X \|^2_{\gamma P} + \| x_k - X \|^2_{\gamma}.\label{eq: x_cond}
\end{align}
We use~\eqref{eq: x_cond} to obtain
\begin{multline}
\frac{1}{2} \Big( \|x_k - \bar{x}_{k+1} \|^2_{\tau^{-1}P} + \| x' - \bar{x}_{k+1}\|^2_{\tau^{-1}P} - \| x' - x_k \|^2_{\tau^{-1}P} \Big) \\
= \frac{1}{2}\Big( \|x_k - \bar{x}_{k+1} \|^2_{\tau^{-1}P} + \mathbb{E}_k \left[ \| x' - x_{k+1}\|^2_{\tau^{-1}}\right] - \| x' - x_k \|^2_{\tau^{-1}} \Big).\label{eq: eq_x_three}
\end{multline}
We use $\gamma = \pi^{-1}\sigma^{-1}$ and $Y=y$ in Lemma~\ref{lem: y_lem}, then
\begin{multline}
\| \bar{y}_{k+1} - y \|^2_{\sigma^{-1}} = \mathbb{E}_k\left[ \| y_{k+1} - y \|^2_{\sigma^{-1}\pi^{-1}} \right] + \| y_k - y \|^2_{\sigma^{-1}} - \| y_k - y \|^2_{\sigma^{-1}\pi^{-1}} - 2 \langle \bar{y}_{k+1} - y, \pi^{-1}\theta A P(\bar{x}_{k+1}-x_k) \rangle\\
- \sum_{i=1}^n \sum_{j=1}^m p_i \pi^{-1}_j\sigma_j\theta_j^2 A_{j, i}^2  (\bar{x}_{k+1}^i - x_k^i)^2. \label{eq: y_res}
\end{multline}
We let $x'^i = p_i^{-1}\underline p x^i + (1-p_i^{-1}\underline p)x_k^i$, and use Lemma~\ref{lem: x_lem}
\begin{equation}
g_p(x') \leq \underline p g(x) - \underline p g(x_k) + g_p (x_k).\label{eq: gp}
\end{equation}
We further use $x' = P^{-1}\underline p x + (1-P^{-1}\underline p)x_k = x_k + P^{-1}\underline p(x-x_k)$ in~\eqref{eq: eq1_lastline} to obtain
\begin{align}
&- \langle \nabla f (x_k), P(x'-\bar{x}_{k+1})  = - \underline p\langle \nabla f (x_k), x-  x_k \rangle - \langle \nabla f (x_k), P( x_k- \bar{x}_{k+1}) \rangle, \label{eq: eq1_lin1}
\end{align}
and
\begin{align}
- \langle A^\top\bar{y}_{k+1}, P(x'-\bar{x}_{k+1}) \rangle = -\underline p \langle A^\top \bar{y}_{k+1}, x-x_k\rangle - \langle A^\top \bar{y}_{k+1}, P(x_k - \bar{x}_{k+1}) \rangle. \label{eq: eq1_lin2}
\end{align}

Moreover, by using Lemma~\ref{lem: x_lem} in~\eqref{eq: eq_x_three}, we get
\begin{align}
\frac{1}{2} \mathbb{E}_k \left[ \| x' - {x}_{k+1} \|^2_{\tau^{-1}}\right] - \frac{1}{2}\| x'-x_k \|^2_{\tau^{-1}} &= \frac{\underline p}{2} \mathbb{E}_k\left[ \| x - {x}_{k+1}\|^2_{\tau^{-1}P^{-1}}\right] + \frac{1}{2} \mathbb{E}_k\left[ \| {x}_{k+1} - x_k \|^2_{\tau^{-1}}\right] \notag \\
&- \frac{\underline p}{2} \mathbb{E}_k\left[ \| {x}_{k+1} - x_k\|^2_{\tau^{-1}P^{-1}}\right]  - \frac{\underline p}{2} \| x-x_k \|^2_{\tau^{-1}P^{-1}}. \label{eq: eq1_xprime} 
\end{align}

We also note that $\mathbb{E}_k \left[ \| x_{k+1} - x_k \|^2_{\tau^{-1}P^{-1}}\right] = \| \bar{x}_{k+1} -x_k\|^2_{\tau^{-1}}$.

In~\eqref{eq: eq1_lastline}, we collect~\cref{eq: eq_x_three,eq: y_res,eq: gp,eq: eq1_lin1,eq: eq1_lin2,eq: eq1_xprime} and add and subtract $\langle A^\top y, x_{k+1} - x\rangle - \underline p \langle Ax, \bar{y}_{k+1} - y \rangle + \langle A^\top y, x_k - x \rangle - \underline p \langle A^\top y, x_k - x\rangle$ to obtain 
\begin{align}
0 &\geq \mathbb{E}_k [g(x_{k+1})] - g(x_k) + \underline p g(x_k) - \underline p g(x) + \underline{p}h^\ast(\bar{y}_{k+1}) - \underline p h^\ast(y) + \langle A^\top y, x_{k+1} - x\rangle -\underline p \langle Ax, \bar{y}_{k+1} - y\rangle \notag\\
&+\langle A^\top y, x_k - x \rangle -\underline p \langle A^\top y, x_k - x \rangle - \langle A^\top y, x_{k+1} - x\rangle +\underline p \langle Ax, \bar{y}_{k+1} - y\rangle-\langle A^\top y, x_k - x \rangle +\underline p \langle A^\top y, x_k - x \rangle \notag \\
&+ \underline p \langle Ax_k, y-\bar{y}_{k+1}\rangle- \underline p\langle \nabla f (x_k),  x-  x_k \rangle - \langle \nabla f (x_k), P( x_k- \bar{x}_{k+1}) \rangle -\underline p \langle A^\top \bar{y}_{k+1}, x-x_k\rangle \notag\\
&- \langle A^\top \bar{y}_{k+1}, P(x_k - \bar{x}_{k+1}) \rangle+\frac{\underline p}{2} \mathbb{E}_k \left[\| {x}_{k+1} - x \|^2_{\tau^{-1}P^{-1}}\right] - \frac{\underline p}{2} \| x_k - x \|^2_{\tau^{-1}P^{-1}} + \| \bar{x}_{k+1} - x_k \|^2_{\tau^{-1}P-\tau^{-1}\frac{\underline p}{2}} \notag\\
&+\frac{\underline p}{2} \|\bar{y}_{k+1} - y_k \|^2_{\sigma^{-1}} +\frac{\underline p}{2}\mathbb{E}_k \left[\| y_{k+1} - y \|^2_{\sigma^{-1}\pi^{-1}}\right] - \frac{\underline p}{2} \| y_k - y \|^2_{\sigma^{-1}\pi^{-1}} - \langle \bar{y}_{k+1} -y, \underline p \pi^{-1}\theta A P(\bar{x}_{k+1} - x_k) \rangle \notag\\
&-\frac{1}{2} \sum_{i=1}^n \sum_{j=1}^m \underline p p_i \pi_j^{-1} \sigma_j \theta_j^2A_{j, i}^2  (\bar{x}_{k+1}^i - x_k^i)^2.\label{eq: one_lem_main2}
\end{align}

We work on the bilinear terms to get
\begin{multline}
\langle A^\top y, x_k - x \rangle - \langle A^\top y, x_{k+1} - x \rangle - \langle A^\top \bar{y}_{k+1}, P(x_k - \bar{x}_{k+1}) \rangle - \langle y-\bar{y}_{k+1}, \underline p\pi^{-1}\theta A P(x_k - \bar{x}_{k+1}) \rangle \\
=\langle A^\top y, x_k - x_{k+1} \rangle - \langle A^\top \bar{y}_{k+1}, \mathbb{E}_k [x_k - x_{k+1}] \rangle - \langle y - \bar{y}_{k+1}, \underline p \pi^{-1} \theta A \mathbb{E}_k[x_k - x_{k+1}] \rangle \\
=\mathbb{E}_k \left[ \langle A^\top(y-\bar{y}_{k+1}), x_k - x_{k+1} \rangle - \langle y-\bar{y}_{k+1}, \underline p \pi^{-1}\theta A(x_k -x_{k+1}) \right].\label{eq: one_lem_bi1}
\end{multline}
We have that this quantity will be $0$ if
\begin{equation}
\underline p \pi ^{-1} \theta= I \iff \theta_j = \frac{\pi_j}{\underline p},~~~\forall j \in \{1, \ldots, m\},\notag
\end{equation}
which is the requirement in~\eqref{eq: theta_choice}.

Moreover, it holds that
\begin{multline}
\underline p\left[ -\langle A^\top y, x_k - x \rangle + \langle Ax, \bar{y}_{k+1} - y \rangle + \langle Ax_k , y-\bar{y}_{k+1} \rangle - \langle A^\top\bar{y}_{k+1}, x-x_k \rangle  \right] = \\
\underline p\left[ \langle A^\top (y-\bar{y}_{k+1}), x-x_k \rangle + \langle y-\bar{y}_{k+1}, A(x_k - x) \rangle \right] = 0.\label{eq: one_lem_bi2}
\end{multline}

We now use coordinatewise smoothness of $f$
\begin{align}
-\langle \nabla f(x_k), P(x_k - \bar{x}_{k+1}) \rangle &= -\langle \nabla f(x_k), \mathbb{E}_k[x_k - {x}_{k+1}] \rangle \geq \mathbb{E}_k \left[f(x_{k+1}) - f(x_k) - \frac{1}{2} \| x_{k+1} - x_k \|^2_{\beta}\right] \notag\\
&\geq \mathbb{E}_k [f(x_{k+1})]- f(x_k) - \frac{1}{2} \| \bar{x}_{k+1} - x_k \|^2_{\beta P}.\label{eq: one_lem_bi3}
\end{align}

We now define
\begin{align}
\tilde{V}(\bar{z}_{k+1}-z_k) &= \frac{\underline p}{2} \| \bar{y}_{k+1} - y_k \|^2_{\sigma^{-1}} +  \| \bar{x}_{k+1} - x_k\|^2_{\tau^{-1}P} - \frac{\underline p}{2} \| \bar{x}_{k+1}-x_k\|^2_{\tau^{-1}} \notag\\
&-\frac{1}{2} \sum_{i=1}^n \sum_{j=1}^m \underline p p_i \pi_j^{-1} \sigma_j \theta_j^2A_{j, i}^2  (\bar{x}_{k+1}^i - x_k^i)^2 -\frac{1}{2} \| \bar{x}_{k+1} - x_k \|^2_{\beta P} \notag\\
&= \frac{\underline p}{2} \| \bar{y}_{k+1} - y_k \|^2_{\sigma^{-1}} + \frac{\underline p}{2} \| \bar{x}_{k+1} - x_k\|^2_{C(\tau)},\label{eq: one_lem_bi4}
\end{align}
where
\begin{align}
C(\tau)_i &= \frac{2p_i}{\underline p \tau_i} - \frac{1}{\tau_i} - p_i\sum_{j=1}^m \pi_j^{-1}\sigma_j \theta_j^2A_{j, i}^2 - \frac{\beta_ip_i}{ \underline p}.\notag
\end{align}
We lastly note that $-\underline p \langle \nabla f(x_k), x-x_k \rangle \geq -\underline p f(x) + \underline p f(x_k)$.

We use the last estimation,~\eqref{eq: one_lem_bi1},~\eqref{eq: one_lem_bi2},~\eqref{eq: one_lem_bi3}, and~\eqref{eq: one_lem_bi4} in~\eqref{eq: one_lem_main2} to get
\begin{align}
0 &\geq \mathbb{E}_k [f(x_{k+1}) + g(x_{k+1}) + \langle x_{k+1} - x,A^\top y\rangle] - f(x) - g(x) + \underline p \bigg( h^\ast(\bar{y}_{k+1}) - \langle Ax, \bar{y}_{k+1} - y \rangle - h^\ast(y) \bigg) \notag\\
&-(1-\underline p)\Big( f(x_k) + g(x_k) - f(x) - g(x) + \langle x_k - x, A^\top y \rangle  \Big) +\tilde{V}(\bar{z}_{k+1} - z_k)\notag\\
&+\frac{\underline{p}}{2} \mathbb{E}_k \left[\| x_{k+1} - x \|^2_{\tau^{-1}P^{-1}}\right] - \frac{\underline{p}}{2} \| x_{k} - x \|^2_{\tau^{-1}P^{-1}} + \frac{\underline p}{2}\mathbb{E}_k \left[\| y_{k+1} - y \|^2_{\sigma^{-1}\pi^{-1}}\right] - \frac{\underline p}{2} \| y_k - y \|^2_{\sigma^{-1}\pi^{-1}}.\notag
\end{align}

We can use the definitions $V(z) = \frac{\underline p}{2} \| x \|^2_{\tau^{-1}P^{-1}} + \frac{\underline p}{2} \| y \|^2_{\sigma^{-1}\pi^{-1}}$ and
\begin{align}
&D_p(x_{k+1}; z) = f(x_{k+1}) + g({x}_{k+1}) - f(x) - g(x) + \langle A^\top y, {x}_{k+1} -x \rangle, \notag\\
&D_d(\bar{y}_{k+1}; z) = h^\ast(\bar{y}_{k+1}) - h^\ast(y) - \langle Ax, \bar{y}_{k+1} - y \rangle.\notag
\end{align}
to conclude.
\end{proof}

\subsection{Proof for almost sure convergence}
We include the statement and proof of~\Cref{thm: thm1}.

\begin{reptheorem}{thm: thm1}
Let $\theta=\diag(\theta_1, \dots, \theta_m), \pi$ be as in~\Cref{lem: lem1}.
The step sizes satisfy
\begin{equation}
\tau_i < \frac{2p_i - \underline{p}}{\beta_i p_i + \underline p^{-1}p_i\sum_{j=1}^m \pi_j \sigma_j A_{j, i}^2}.\notag
\end{equation}
Let Assumption~\ref{asmp: asmp1} hold.
Then, almost surely, there exist $z_\star = (x_\star, y_\star) \in \mathcal{Z}_\star$ such that $z_k \to z_\star$.\end{reptheorem}
\begin{proof}
Equipped with Lemma~\ref{lem: lem1}, we will follow the standard arguments as in~\citet{fercoq2019coordinate}. We refer to~\citep[Theorem 1]{fercoq2019coordinate} for the finer details of the arguments.

We first invoke the main result of Lemma~\ref{lem: lem1} with $z=z_\star=(x_\star, y_\star)$ where $z_\star \in \mathcal{Z}_\star$:
\begin{align}
\mathbb{E}_k \left[ S_p(x_{k+1}) \right] +\underline p S_d(\bar{y}_{k+1}) + \mathbb{E}_k \left[ V(z_{k+1} - z_\star) \right] \leq (1-\underline p)S_p (x_k) + V(z_k - z_\star) - \tilde{V}(\bar{z}_{k+1} - z_k),\label{eq: as_eq1}
\end{align}
where we have used the definitions
\begin{align}
S_p(x_{k+1}) = D_p(x_{k+1}; z_\star) \geq 0,~~~S_d(\bar{y}_{k+1}) = D_d(\bar{y}_{k+1}; z_\star)\geq 0.\notag
\end{align}
Nonnegativity of these quantities follow from the definition of $z_\star$ as in~\eqref{eq: kkt}.

Moreover, we see that $\tilde{V}(z)$ is nonnegative if it holds that $C(\tau)_i > 0$, and equivalently,
\begin{align}
\tau_i < \frac{2p_i \underline p^{-1} - 1}{\beta_i p_i \underline p^{-1} + p_i  \sum_{j=1}^m \pi_j^{-1}\sigma_j\theta_j^2 A_{j, i}^2} \iff \tau_i < \frac{2p_i - \underline{p}}{\beta_i p_i + \underline p^{-1}p_i\sum_{j=1}^m \pi_j \sigma_j A_{j, i}^2},\label{eq: ss_cond_pf}
\end{align}

where we used $\theta_j = \frac{\pi_j}{\underline p}$ and~\eqref{eq: ss_cond_pf} is exactly our step size requirement.

It is also immedate that $V(z)$ is nonnegative.

Then, we can write~\eqref{eq: as_eq1} as
\begin{equation}
\mathbb{E}_k\left[S_p(x_{k+1}) + V(z_{k+1} - z_\star) \right] \leq S_p(x_k)+V(z_k - z_\star) - \underline p \left( S_p(x_k) + S_d(\bar{y}_{k+1}) \right).\label{eq: main_rec_ineq}
\end{equation}
We use Robbins-Siegmund lemma on this inequality and nonnegativity of $S_p(x_k), S_d(\bar{y}_k), V(z), \tilde{V}(z)$ to conclude that $V(z_k - z_\star)$ converges almost surely, $\sum_{k} S_p(x_k) + S_d(\bar{y}_{k+1}) < \infty$, therefore $S_p(x_k)$ and $S_d(\bar{y}_k)$ converges to $0$ almost surely.
Then, we argue as in~\cite{fercoq2019coordinate,iutzeler2013asynchronous},~\citep[Proposition 2.3]{combettes2015stochastic}, to get $\forall w\in \Omega$ with $\mathbb{P}(\Omega)=1$, and $\forall z_\star \in \mathcal{Z}_\star$, $V(z_k(\omega)-z_\star)$ converges.

We now take full expectation of~\eqref{eq: as_eq1}, use the nonnegativity of $S_p(x_k)$, $S_d(\bar y_k)$, and sum the inequality to obtain for any $K$,
\begin{align}
\sum_{k=0}^K \mathbb{E} \left[ \tilde{V}(\bar{z}_{k+1} - z_k) \right] \leq S_p(x_0) + V(z_0 - z_\star) := \Delta_0 < \infty.\label{eq: sum_of_subseq}
\end{align}
It then follows by Fubini-Tonelli theorem that $\mathbb{E} \left[ \sum_{k=0}^{\infty} \tilde{V}(\bar{z}_{k+1} - z_k) \right] < \infty$. Since $\tilde{V}(\bar{z}_{k+1} - z_k)$ is nonnegative, by the step size rules, we have that $\tilde{V}(\bar{z}_{k+1} - z_k)$ converges almost surely to $0$. Then, since $\tilde{V}(z)$ is squared norm with the step size rules, we have that $\bar{z}_{k+1} - z_k$ converges to $0$ almost surely. 

We define $T\colon\mathcal{X}\times\mathcal{Y} \to \mathcal{X}\times\mathcal{Y}$ such that $\bar{z} = T(z)$ and
\begin{align}
&\bar{y} = \prox_{\sigma, h^\ast}\left( y + \sigma Ax \right) \notag\\
&\bar{x} = \prox_{\tau, g}\left( x - \tau (\nabla f(x) + A^\top \bar{y}) \right).\notag
\end{align}
Thus, we see that $\bar{z}_{k+1} = T(z_k)$.

We use the definition of proximal operator in the definition of $T$ and compare with the definition of a saddle point in~\eqref{eq: kkt} to conclude that the fixed points of $T$ correspond to the set of saddle points $\mathcal{Z}_\star$.

We recall that for any trajectory $\omega$ selected from a set of probability $1$, $z_k(\omega)$ is a bounded sequence, as we already proved that $V(z_k-z_\star)$ converges almost surely, and $V(z)$ is squared norm.
As $z_k(\omega)$ is bounded, it converges on at least one subsequence. 
We denote by $\check{z}$ the cluster point of this subsequence.
By the fact that $\bar{z}_{k+1}(\omega) = T(z_{k}(\omega))$ and $\bar{z}_{k+1}(\omega) - z_k(\omega) \to 0$, we conclude that $T(z_k(\omega)) - z_k(\omega) \to 0$.
As $T$ is continuous, by the nonexpansiveness of proximal operator, we get $T(\check{z})-\check{z} = 0$.
Since $\check{z}$ is a fixed point of $T$ we conclude that $\check{z}\in\mathcal{Z}_\star$.

Since we know that for any $z_\star\in\mathcal{Z}$, $V(z_k - z_\star)$ converges almost surely, and we have proved V($z_k(\omega)-\check{z})$ converges to $0$ at least on one subsequence, and as $V$ is squared norm, we conclude that the sequence $z_k$ converges to a point in $\mathcal{Z}_\star$, almost surely.
\end{proof}
\subsection{Proof for linear convergence}
In this section, we include the statement and proof of~\Cref{thm: thm2}.
First, we need a lemma to characterize the specific choice of projection onto the solution set, given in~\eqref{eq: def_zkstar}.
\begin{lemma}\label{eq: lemma_linconv}
Let us denote
\begin{align*}
z^{\star, b} &= \arg\min_{u \in \mathcal{Z}_\star} D_p(x; u) + V(z - u).\\
z^{\star, e} &= \arg\min_{u \in \mathcal{Z}_\star}V(z - u).
\end{align*}
We have that $V(z^{\star, b} - z^{\star, e}) \leq c V(z - z^{\star, e})$, where $c=C_{2,V}\sqrt{\frac{\|A\|}{2}}$ and $C_{2,V}$ is such that for any $z$, $\| z\| \leq C_{2,V} V(z)^{1/2}$.
\end{lemma}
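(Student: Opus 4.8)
The plan is to sandwich $V(z^{\star,b}-z^{\star,e})$ between the optimality conditions of the two projections and then collapse the whole estimate onto a single bilinear term that Cauchy--Schwarz can control. Throughout write $a=z^{\star,e}=(x_a,y_a)$ and $b=z^{\star,b}=(x_b,y_b)$, and recall that $V(w)=\tfrac12\langle w,Qw\rangle$ is a genuine squared norm with $Q=\underline{p}\,\diag(\tau^{-1}P^{-1},\sigma^{-1}\pi^{-1})\succ0$. In particular $a$ is exactly the $Q$-orthogonal projection of $z$ onto the closed convex set $\mathcal{Z}_\star$, so standard projection theory applies.

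First I would invoke the Pythagorean inequality for projection onto a convex set in the $Q$-metric: since $a$ minimizes $V(z-\cdot)$ over $\mathcal{Z}_\star$ and $b\in\mathcal{Z}_\star$,
\[
V(b-a)\le V(z-b)-V(z-a).
\]
Next, the defining optimality of $b$ as the minimizer of $D_p(x;\cdot)+V(z-\cdot)$ over $\mathcal{Z}_\star$, tested against the competitor $a$, gives $D_p(x;b)+V(z-b)\le D_p(x;a)+V(z-a)$, that is $V(z-b)-V(z-a)\le D_p(x;a)-D_p(x;b)$. Chaining the two displays yields $V(b-a)\le D_p(x;a)-D_p(x;b)$, so the problem is reduced to estimating a difference of the two Bregman-type terms.

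The crux, and the step I expect to be the main obstacle, is showing that this difference of the two \emph{nonsmooth} terms collapses to a clean bilinear expression. Expanding $D_p(x;a)-D_p(x;b)$ from the definition, the terms $f(x)+g(x)$ cancel and what remains is $f(x_b)+g(x_b)-f(x_a)-g(x_a)+\langle A^\top y_a,\,x-x_a\rangle-\langle A^\top y_b,\,x-x_b\rangle$. Here I would use the KKT characterization~\eqref{eq: kkt} of $b\in\mathcal{Z}_\star$, namely $-A^\top y_b\in\partial(f+g)(x_b)$, and apply the subgradient inequality of the convex function $f+g$ at $x_b$ evaluated at $x_a$; this bounds $f(x_b)+g(x_b)-f(x_a)-g(x_a)$ by $\langle -A^\top y_b,\,x_b-x_a\rangle$. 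After substituting, the $y_b$-terms telescope and every contribution of the nonsmooth functions disappears, leaving exactly
\[
D_p(x;a)-D_p(x;b)\le \langle A^\top(y_a-y_b),\,x-x_a\rangle .
\]
The delicate points are getting the signs right in the subgradient inequality and noting that only the optimality of $b$ (not of $a$) is needed for this cancellation.

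Finally I would estimate the bilinear term by Cauchy--Schwarz and the operator norm, $\langle A^\top(y_a-y_b),x-x_a\rangle\le\|A\|\,\|y_a-y_b\|\,\|x-x_a\|$, and convert Euclidean norms into $V^{1/2}$ quantities through the constant $C_{2,V}$, which by definition satisfies $\|w\|\le C_{2,V}V(w)^{1/2}$ and whose value reflects the primal weighting $\underline{p}\,\tau^{-1}P^{-1}$. Writing $\|x-x_a\|\le C_{2,V}V(z-a)^{1/2}$ and controlling $\|y_a-y_b\|$ by $V(b-a)^{1/2}$ via the dual weighting, the bound becomes linear in $V(b-a)^{1/2}$; dividing through by $V(b-a)^{1/2}$ (the case $V(b-a)=0$ being trivial) and matching the primal/dual scalings to the definitions of $C_{2,V}$ and $\|A\|$ yields a contraction of the announced form $V(z^{\star,b}-z^{\star,e})\le c\,V(z-z^{\star,e})$ with $c=C_{2,V}\sqrt{\|A\|/2}$. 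This closing step is routine norm bookkeeping; the entire substance of the lemma sits in the subgradient cancellation of the previous paragraph.
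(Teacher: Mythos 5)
Your proposal is correct in its overall architecture and reaches essentially the same key bilinear inequality as the paper, but the crucial middle step is handled by a genuinely different argument, and this costs you a factor of $2$ in the constant. The paper first proves a structural fact: for $u=(u_x,u_y)\in\mathcal{Z}_\star$, $D_p(x;u)$ is independent of $u_x$ and affine in $u_y$, namely $D_p(x;u)=C(x,x^\star)+\langle A(x-x^\star),u_y\rangle$ for every primal solution $x^\star$. This buys two things: (i) the objective defining $z^{\star,b}$ is affine plus the quadratic $V(z-\cdot)$, so its minimizer satisfies the strongly convex three-point (prox) inequality carrying the extra term $-V(z^{\star,b}-z^{\star,e})$, and summing it with the analogous inequality for $z^{\star,e}$ yields
\begin{equation*}
2\,V(z^{\star,b}-z^{\star,e}) \le \langle A(x-x^\star),\,y^{\star,e}-y^{\star,b}\rangle ,
\end{equation*}
valid for \emph{every} primal solution $x^\star$, after which one sets $x^\star=x^{\star,e}$; (ii) the cancellation is exact, no subgradient estimate needed. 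You instead bound $D_p(x;z^{\star,e})-D_p(x;z^{\star,b})$ via the KKT inclusion $-A^\top y^{\star,b}\in\partial(f+g)(x^{\star,b})$ and the subgradient inequality; your telescoping computation is correct and lands on the same bilinear term $\langle A^\top(y^{\star,e}-y^{\star,b}),\,x-x^{\star,e}\rangle$. This is arguably more elementary, since it bypasses the Lagrangian-constancy argument entirely. The price is that you invoke only \emph{plain} optimality of $z^{\star,b}$ — and indeed you cannot invoke the three-point inequality for it without first knowing $D_p(x;\cdot)$ is convex on $\mathcal{Z}_\star$, which is precisely the paper's structural fact — so you get $V(z^{\star,b}-z^{\star,e})\le\langle\cdots\rangle$ where the paper gets $2V(z^{\star,b}-z^{\star,e})\le\langle\cdots\rangle$. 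Consequently your closing Cauchy--Schwarz step produces $V(z^{\star,b}-z^{\star,e})^{1/2}\le C_{2,V}^2\|A\|\,V(z-z^{\star,e})^{1/2}$ rather than the paper's $V(z^{\star,b}-z^{\star,e})^{1/2}\le \tfrac12 C_{2,V}^2\|A\|\,V(z-z^{\star,e})^{1/2}$, so the $\sqrt{\|A\|/2}$ you announce is not actually reachable by your route (you would get $\sqrt{\|A\|}$); this is a constant-tracking blemish rather than a fatal gap — the paper's own bookkeeping between the lemma statement and its proof is loose on exactly this point, as the powers of $c$ there do not match either — but you should either accept the larger constant or import the paper's affinity observation to restore the missing $-V(z^{\star,b}-z^{\star,e})$ term.
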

\begin{proof}
Let us first remark that since $u =(u_x, u_y)\in \mathcal Z_*$ is a saddle point of the Lagrangian $L(x,y) =f(x) + g(x) + \langle Ax, y\rangle -h^*(y)$, we have that $D_p(x, u) = L(x, u_y) - L(u_x, u_y)$ is independent of $u_x$ and affine in $u_y$. In particular, there exists a constant, for any primal solution $x^\star$, $C(x, x^\star) = f(x) + g(x) - f(x^\star) - g(x^\star)$ such that
$D_p(x, u) = C(x, x^\star) + \langle A (x-x^\star), u_y\rangle$, for all $u \in \mathcal Z_*$. We have also used here the fact that for two different primal solutions $x_1^\star, x_2^\star$, it follows that $L(x_1^\star, u_y) = L(x_2^\star, u_y)$, where $u_y$ is a dual solution.

Let us now consider primal-dual point $z$.
By prox inequality,
\begin{align*}
&C(x, x^\star) + \langle A (x-x^\star), y^{*, b}\rangle + V(z - z^{*, b}) \leq C(x, x^\star) + \langle A (x-x^\star), y^{*, e}\rangle + V(z - z^{*, e}) - V(z^{*, b} - z^{*, e}) \\
&V(z^{*, e} - z) \leq V(z-z^{*, b}) - V(z^{*, e}-z^{*, b}),
\end{align*}
where $x^\star$ is any primal solution.

Summing both equalities and rearranging yields
\begin{align*}
2 V(z^{*, e} - z^{*, b}) &\leq \langle A(x-x^\star), y^{*, e} - y^{*, b} \rangle
\end{align*}
Since the inequality holds for any $x^\star$, we can plug in $x^{*, e}$ and use Cauchy-Schwarz inequality to get
\begin{equation*}
2 V(z^{*, e} - z^{*, b}) \leq C_{2,V}^2 \| A\| V(z-z^{*, e})^{1/2}V(z^{*, b}-z^{*,e})^{1/2}.
\end{equation*}
Lastly $C_{2,V} = \sqrt{\frac{2}{\underline p\min_i\{\tau_i^{-1}p_i^{-1}\}}}$.
\end{proof}

\begin{reptheorem}{thm: thm2}
Let~\Cref{asmp: asmp1} and~\ref{asmp: asmp2} hold.
Let $\theta$ and the step sizes $\tau, \sigma$ be chosen according to~\eqref{eq: theta_choice} and~\eqref{eq: ss_choice}, respectively.
Moreover, $z_k^\star = (x_k^\star, y_k^\star)$ is as defined in~\eqref{eq: def_zkstar}.
Then, for $z_k=(x_k, y_k)$ generated by Algorithm~\ref{alg:stripd}, it follows that
\begin{equation}
\mathbb{E} \left[\frac{\underline p}{2} \| x_k - x_k^\star \|^2_{\tau^{-1} P^{-1}} + \frac{\underline p}{2} \| y_k - y_k^\star \|^2_{\sigma^{-1}\pi^{-1}} \right] \\ 
\leq (1-\rho)^k \Delta_0,\notag
\end{equation}
where $\Delta_0 = D_p(x_0; z_0^\star) + V(z_0 - z_0^\star)$,
$\rho = \min\left( \underline p, \frac{C_{2, \tilde V}}{C_{V, 2} ((2+2c)+(1+c)(\eta \| H-M\|+\beta))^2} \right)$, $\bar \beta$ is the Lipschitz constant of $f$, $C_{2, \tilde V} = \frac{\underline{p}}{2} \min \left\{ \min_i C(\tau)_i, \min_j \sigma_j^{-1} \right\}$, 
$C_{V, 2}=\frac{1}{2} \max\left\{ \max_i \frac{1}{\tau_i}, \max_j \frac{1}{\sigma_j} \right\}$, $C_{2,V} = \sqrt{\frac{2}{\underline p\min_i\{\tau_i^{-1}p_i^{-1}\}}}$, $c=C_{2,V}\sqrt{\| A\|/2}$, and
\begin{equation}
H = \begin{bmatrix} \tau^{-1} & A^\top \\ 0 & \sigma^{-1} \end{bmatrix}, ~~~~ M = \begin{bmatrix} 0 & A^\top \\ -A & 0 \end{bmatrix}.\notag
\end{equation}
\end{reptheorem}
\begin{proof}
We note the definitions, as in~\cite{latafat2019new},
\begin{align}
&A\colon (x,y) \mapsto (\partial g(x) + \partial h^\ast(y)) \notag\\
&M\colon (x, y) \mapsto (A^\top y, -Ax) \notag\\
&C\colon (x, y) \mapsto (\nabla f(x), 0) \notag\\
&H\colon (x, y) \mapsto (\tau^{-1} x + A^\top y, \sigma^{-1}y).\notag
\end{align}
Under this notations, KKT operator defined in~\eqref{eq: kkt} can be written as
\begin{equation}\label{eq: f_op_def}
F = A + M + C.
\end{equation}
Moreover, $\bar{z}_{k+1} = (H+A)^{-1}(H-M-C)z_k$, which in fact (without the term $\nabla f(x_k)$) is the well-known Arrow-Hurwicz operator~\cite{arrow1958studies} which is one of the first primal-dual algorithms.

Moreover, we will use the following inequalities regarding squared norms $V$ and $\tilde{V}$
\begin{align}
&\tilde{V}(z) \geq C_{2, \tilde V} \left( \|x\|^2 + \|y\|^2 \right) :=  \frac{\underline{p}}{2} \min \left\{ \min_i C(\tau)_i, \min_j \sigma_j^{-1} \right\}  \left( \|x\|^2 + \|y\|^2 \right), \label{eq: defc2}\\
&V(z) \leq C_{V, 2} \left( \|x\|^2 + \|y\|^2 \right) := \frac{1}{2} \max\left\{ \max_i \frac{1}{\tau_i}, \max_j \frac{1}{\sigma_j} \right\} \left( \|x\|^2 + \|y\|^2 \right).\label{eq: defc3}
\end{align} 
We recall the definition of $z_k^\star$
\begin{equation}
z_k^\star = \arg\min_{u \in \mathcal{Z}_\star} D_p(x_{k}; u) + V(z_k - u).\notag
\end{equation}
We now argue that $z_k^\star$ is well-defined under our assumptions.
First, we know that the solution set is convex and closed. 
Second, $D_p(x_{k+1}; u) \geq 0$ for all $u\in\mathcal{Z}_\star$ and it is also lower semicontinuous under~\Cref{asmp: asmp1}. 
Third, we remark that $V(z_k-u)$ is squared norm, thus coercive, therefore the sum is coercive and lower semicontinuous over $\mathcal{Z}_\star$.
Hence, $z_k^\star$ exists.

We use the result of Lemma~\ref{lem: lem1} with $z=z_k^\star$ and use the fact that $D_d(\bar{y}_{k+1}; z_k^\star) \geq 0$
\begin{align*}
\mathbb E_k\Big[D_p(x_{k+1}; z_k^\star)  + V(z_{k+1} - z_k^\star)] \leq (1 - \underline p)D_p(x_k; z_k^\star) + V(z_k - z_k^\star) - \tilde{V}(\bar{z}_{k+1} - z_k).
\end{align*}
We use the definition of $z_{k+1}^\star$ to deduce
\begin{align}
\mathbb E_k\Big[D_p(x_{k+1}; z_{k+1}^\star)  + V(z_{k+1} - z_{k+1}^\star)] \leq (1 - \underline p)D_p(x_k; z_k^\star) + V(z_k - z_k^\star) - \tilde{V}(\bar{z}_{k+1} - z_k).\label{eq: lin0}
\end{align}
In addition to Bregman projections $z_{k}^\star$ and $\bar z_{k+1}^{\star}$, we introduce the definitions for Euclidean projections
\begin{align}
&z_k^{\star, e} = \arg\min_{u\in\mathcal{Z}_\star} V(z_k - u), \notag\\
&\bar z_{k+1}^{\star, e} = \arg\min_{u\in\mathcal{Z}_\star} V(\bar z_{k+1} - u).\notag
\end{align}
Now, we use triangle inequalities and~\Cref{eq: lemma_linconv} to get
\begin{align}
V(z_k - z_k^{\star})^{1/2} &\leq
V(z_k - z_k^{\star, e})^{1/2} + V(z_k^{\star, e} - z_k^{\star})^{1/2} \leq (1+c) V(z_k - z_k^{\star, e})^{1/2} \notag\\
&\leq (1+ c)(V(z_k - \bar z_{k+1})^{1/2} + V(\bar z_{k+1} - \bar z_{k+1}^{\star, e})^{1/2}+ V(z_k^{\star, e} - \bar z_{k+1}^{\star, e})^{1/2})\notag
\end{align}
We use nonexpansiveness with metric $V$ to obtain
\begin{align}
V(z_k - z_k^{\star})^{1/2} \leq (2+2c)V(z_k - \bar z_{k+1})^{1/2} + (1+c) V(\bar z_{k+1}^{\star, e} - \bar z_{k+1})^{1/2}.\notag
\end{align}
We use the definition of $\bar z_{k+1}^{\star, e}$ to get $V(\bar z_{k+1} - \bar z_{k+1}^{*, e}) \leq V(\bar z_{k+1} - P_{\mathcal{Z}_\star}(\bar z_{k+1}))$, and then we use the relation between $V$ and Euclidean norm.
\begin{align}
V(z_k - z_k^{\star}) &\leq (2+2c)^2C_{V,2} \|z_k - \bar z_{k+1} \|^2 + (1+c)^2 C_{V,2} \| P_{\mathcal{Z}_\star}(\bar z_{k+1}) - \bar z_{k+1}\|^2 \notag \\
&= (2+2c)^2C_{V,2} \|z_k - \bar z_{k+1} \|^2 + (1+c)^2 C_{V,2} \dist(\bar z_{k+1}, \mathcal{Z}_\star)^2.\label{eq: lin1}
\end{align}

We now use metric subregularity of $F$ for $0$, and the assumption that $\bar{z}_{k+1} \in \mathcal{N}(z_\star), \forall z_\star$,
\begin{align*}
\dist(\bar z_{k+1}, \mathcal{Z}_\star) \leq \eta \dist(0, F(\bar{z}_{k+1})).
\end{align*}
We now use that $(H-M-C)(z_k - \bar{z}_{k+1}) \in F(\bar{z}_{k+1})$, which can be obtained by using~\eqref{eq: f_op_def} and $\bar{z}_{k+1} = (H+A)^{-1}(H-M-C)z_k$. 
Therefore
\begin{align*}
\dist(\bar z_{k+1}, \mathcal{Z}_\star) \leq \eta \| (H-M-C)(z_k-\bar{z}_{k+1}) \| \leq \eta (\|H-M\|+\beta) \| z_k-\bar{z}_{k+1} \|,
\end{align*}
where $\beta$ is the global Lipschitz constant of $f$.

We plug this inequality into~\eqref{eq: lin1} to obtain
\begin{align*}
V(z_k - z_k^\star) \leq C_{V, 2} ((2+2c)+(1+c)(\eta \| H-M\|+\beta))^2 \| \bar{z}_{k+1} - z_k \|^2.
\end{align*}
Moreover, since $\tilde{V}(\bar{z}_{k+1} - z_k)$ is a squared norm, under the step size condition, it follows that $\tilde{V}(\bar{z}_{k+1} - z_k) \geq C_{2,\tilde V} \| \bar{z}_{k+1} - z_k \|^2$, as in~\eqref{eq: defc2}, therefore,
\begin{equation}
V(z_k - z_k^\star) \leq \frac{C_{V, 2} ((2+2c)+(1+c)(\eta \| H-M\|+\beta))^2}{C_{2, \tilde V}} \tilde{V}(\bar{z}_{k+1}-z_k).\notag
\end{equation}
We use this inequality in~\eqref{eq: lin0} to obtain
\begin{multline*}
\mathbb E_k\Big[D_p(x_{k+1}; z_{k+1}^\star)  + V(z_{k+1} - z_{k+1}^\star)] \leq (1- \underline p)D_p(x_k; z_{k}^\star) \\
+ \left( 1- \frac{C_{2, \tilde V}}{C_{V, 2} ((2+2c)+(1+c)(\eta \| H-M\|+\beta))^2} \right) V(z_k - z_k^\star),
\end{multline*}
where the constants $C_{2, \tilde V}$, $C_{V, 2}$ are as defined in~\eqref{eq: defc2},~\eqref{eq: defc3}.

We take full expectation and define 
$\rho = \min\left( \underline p, \frac{C_{2, \tilde V}}{C_{V, 2} ((2+2c)+(1+c)(\eta \| H-M\|+\beta))^2} \right)$.
Then, we have that
\begin{equation*}
\mathbb{E} \left[ D_p(x_{k+1}; z_{k+1}^\star)  + V(z_{k+1} - z_{k+1}^\star) \right] \leq (1-\rho) \mathbb{E}\left[ D_p(x_{k}; z_{k}^\star)  + V(z_{k} - z_{k}^\star) \right].
\end{equation*}
We have that $0 < \rho < 1$, as metric subregularity constant $\eta > 0$. Hence, linear convergence of $D_p(x_k, z_k^\star)$ and $V(z_k -z_k^\star)$ follows.
We obtain the final result after using the definition of $V$, and the fact that $D_p(x_{k+1}; z_{k+1}^\star) \geq 0$.
\end{proof}

\subsection{Ergodic convergence rates}
We introduce the following lemma, which establishes the properties of the sequence $\breve y_k$.

\begin{lemma}\label{lem: breve}
We define the iterate, $\breve{y}_1 = y_1 = \bar{y}_1$, and
\begin{align}
&\breve{y}_{k+1}^j = \bar{y}_{k+1}^j, \forall j \in J(i_{k+1}) \notag\\
&\breve{y}_{k+1}^j = \breve{y}_k^j, \forall j\not\in J(i_{k+1}),\notag
\end{align}
where computing $\breve{y}_{k+1}$ requires the same number of operations as computing $y_{k+1}$ every iteration, and $\breve{y}_k$ is $\mathcal{F}_k$-measurable.

Moreover, if it holds for a function $l$ that $l(y) = \sum_{j=1}^m l_j(y_j)$ and
\begin{align}
l_\gamma(y) = \sum_{j=1}^m \gamma_j l_j (y_j),\notag
\end{align}
we have the following for the sequence $\breve{y}_k$, and $\mathcal{F}_k$-measurable $Y$:
\begin{align}
&\mathbb{E}_k\left[ \breve{y}_{k+1}^j - \breve{y}_k^j \right] = \pi_j \left( \bar{y}_{k+1}^j - \breve{y}_k^j \right), \forall j \notag\\
&\mathbb{E}_k\left[ \|\breve{y}_{k+1} - Y \|^2_{\gamma} \right] = \| \bar{y}_{k+1} - Y \|^2_{\gamma \pi} - \|\breve{y}_k - Y \|^2_{\gamma\pi} + \| \breve{y}_k - Y \|^2_{\gamma} \notag\\
&\mathbb{E}_k \left[ l(\breve{y}_{k+1})\right] = l_\pi(\bar{y}_{k+1}) - l_{\pi}(\breve{y}_k) + l(\breve{y}_k) \notag\\
&\mathbb{E}_{k} \left[\| \breve{y}_{k+1} - y_{k+1} \|^2_\gamma\right] =  \| \bar{x}_{k+1} - x_k \|^2_{B(\gamma)} + \| \breve{y}_k - y_k \|^2_{\gamma} - \| \breve{y}_k - y_k \|^2_{\gamma\pi} \notag\\
&\sum_{k=1}^K \mathbb{E} \left[ \| \breve{y}_{k+1} - \breve{y}_k \|^2_\gamma \right] \leq 2\sum_{k=1}^K \mathbb{E}\left[ \| \bar{y}_{k+1} - y_k \|^2_{\gamma\pi}\right] +2 \sum_{k=1}^K \mathbb{E} \left[\| \bar{x}_{k+1} - x_k \|^2_{B(\gamma)}\right] \notag\\
&\mathbb{E}\left[\| \breve{y}_k\|^2_{\gamma\pi}\right]\leq 2\mathbb{E} \left[\| y_k \|^2_{\gamma \pi}\right] + 2\sum_{k=1}^K \mathbb{E} \left[\| \bar{x}_{k+1} - x_k\|^2_{B(\gamma)}\right],\notag
\end{align}
where 
\begin{align}
B(\gamma)_i = p_i \sum_{j=1}^m  \theta_j^2 \gamma_j \sigma_j^2 A_{j, i}^2,  \text{ and } \pi_j = \sum_{i\in I(j)} p_i.\notag
\end{align}
\end{lemma}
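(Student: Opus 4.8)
The plan is to handle the six identities and bounds in roughly the order they are stated, exploiting throughout that for a fixed dual index $j$ the event $\{j \in J(i_{k+1})\}$ coincides with $\{i_{k+1} \in I(j)\}$ and therefore has conditional probability $\mathbb{P}(j\in J(i_{k+1}) \mid \mathcal{F}_k) = \sum_{i\in I(j)} p_i = \pi_j$. Measurability of $\breve y_k$ is immediate by induction: $\bar y_{k+1}$ is $\mathcal F_k$-measurable (it is computed before drawing $i_{k+1}$), and $\breve y_{k+1}$ depends only on $\bar y_{k+1}$, $\breve y_k$ and $i_{k+1}$, so it is $\mathcal F_{k+1}$-measurable; the per-iteration cost claim holds because only the coordinates $j\in J(i_{k+1})$ change, exactly as for $y_{k+1}$. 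For the first three displayed identities I would work coordinatewise: since $\breve y_{k+1}^j$ equals $\bar y_{k+1}^j$ on $\{j\in J(i_{k+1})\}$ and $\breve y_k^j$ otherwise, taking conditional expectation gives $\mathbb{E}_k[\breve y_{k+1}^j-\breve y_k^j]=\pi_j(\bar y_{k+1}^j-\breve y_k^j)$, and likewise $\mathbb{E}_k[(\breve y_{k+1}^j-Y^j)^2]=\pi_j(\bar y_{k+1}^j-Y^j)^2+(1-\pi_j)(\breve y_k^j-Y^j)^2$ and $\mathbb{E}_k[l_j(\breve y_{k+1}^j)]=\pi_j l_j(\bar y_{k+1}^j)+(1-\pi_j)l_j(\breve y_k^j)$; weighting by $\gamma_j$ (resp. summing) and collecting the $\pi$-weighted terms produces the stated identities, in direct analogy with the computation in Lemma~\ref{lem: y_lem}.

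The main work is the fourth identity, comparing $\breve y_{k+1}$ with the true dual iterate $y_{k+1}$. Here I would use that the two updates agree off the extrapolation term: on $\{j\in J(i_{k+1})\}$ we have $\breve y_{k+1}^j - y_{k+1}^j = -\sigma_j\theta_j (A(x_{k+1}-x_k))_j$, while off $J(i_{k+1})$ we have $\breve y_{k+1}^j-y_{k+1}^j=\breve y_k^j-y_k^j$. Substituting $(A(x_{k+1}-x_k))_j = A_{j,i_{k+1}}(\bar x_{k+1}^{i_{k+1}}-x_k^{i_{k+1}})$, conditioning on $i_{k+1}=i$, and summing the weighted squares yields, for each $j$, a contribution $\sum_{i\in I(j)} p_i \sigma_j^2\theta_j^2 A_{j,i}^2(\bar x_{k+1}^i-x_k^i)^2$ from the updated coordinates and $(1-\pi_j)(\breve y_k^j-y_k^j)^2$ from the rest. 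The delicate step is exchanging the order of summation via $\sum_{i}\sum_{j\in J(i)}=\sum_j\sum_{i\in I(j)}$ and using $A_{j,i}=0$ off $J(i)$ to rewrite the first contribution as $\sum_i (\bar x_{k+1}^i-x_k^i)^2\, p_i\sum_j \gamma_j\sigma_j^2\theta_j^2 A_{j,i}^2 = \|\bar x_{k+1}-x_k\|^2_{B(\gamma)}$, which is exactly where the matrix $B(\gamma)$ emerges; the remaining term collapses to $\|\breve y_k-y_k\|^2_\gamma-\|\breve y_k-y_k\|^2_{\gamma\pi}$.

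With the fourth identity in hand the last two bounds are routine. Taking full expectation of it and rearranging gives $\mathbb{E}[\|\breve y_k-y_k\|^2_{\gamma\pi}] = \mathbb{E}[\|\bar x_{k+1}-x_k\|^2_{B(\gamma)}] + \mathbb{E}[\|\breve y_k-y_k\|^2_\gamma] - \mathbb{E}[\|\breve y_{k+1}-y_{k+1}\|^2_\gamma]$, so summing over $k$ telescopes the $\|\breve y_k-y_k\|^2_\gamma$ terms; since $\breve y_1=y_1$ and the final term is nonnegative, $\sum_{k=1}^K\mathbb{E}[\|\breve y_k-y_k\|^2_{\gamma\pi}]\le \sum_{k=1}^K\mathbb{E}[\|\bar x_{k+1}-x_k\|^2_{B(\gamma)}]$. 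For the fifth bound I would compute $\mathbb{E}_k[\|\breve y_{k+1}-\breve y_k\|^2_\gamma]=\|\bar y_{k+1}-\breve y_k\|^2_{\gamma\pi}$ coordinatewise, split it with $\|a-c\|^2\le 2\|a-b\|^2+2\|b-c\|^2$ at $b=y_k$, and bound $\sum_k\mathbb{E}[\|y_k-\breve y_k\|^2_{\gamma\pi}]$ by the telescoped estimate just derived. The sixth bound follows similarly from $\|\breve y_k\|^2_{\gamma\pi}\le 2\|y_k\|^2_{\gamma\pi}+2\|\breve y_k-y_k\|^2_{\gamma\pi}$, after which I drop the nonpositive $-\|\breve y_k-y_k\|^2_{\gamma\pi}$ in the recursion to telescope $\mathbb{E}[\|\breve y_k-y_k\|^2_\gamma]\le\sum_{k'<k}\mathbb{E}[\|\bar x_{k'+1}-x_{k'}\|^2_{B(\gamma)}]$ and use $\pi_j\le 1$ to pass from $\gamma$ to $\gamma\pi$. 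The only genuine obstacle is the bookkeeping in the fourth identity; everything else is telescoping and elementary inequalities.
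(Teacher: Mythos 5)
Your proposal is correct and follows essentially the same route as the paper's proof: coordinatewise conditioning via $\mathbb{P}(j\in J(i_{k+1})\mid\mathcal{F}_k)=\pi_j$, the sum exchange $\sum_i\sum_{j\in J(i)}=\sum_j\sum_{i\in I(j)}$ together with $A_{j,i}=0$ off $J(i)$ to produce $B(\gamma)$ in the fourth identity, and telescoping of that identity (using $\breve y_1=y_1$) for the last two bounds. The only deviation is cosmetic: for the sixth bound you unroll the recursion and invoke $\pi_j\le 1$, whereas the paper reuses its summed inequality directly; both are the same telescoping argument.
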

\begin{proof}
We first use the definition of $\breve{y}_k$ to get the first equality.
\begin{align*}
\mathbb E_k[\breve{y}_{k+1}^j] &= \mathbb{E}_k\left[ \mathds{1}_{j \in J(i_{k+1})} \bar{y}_{k+1}^j + \mathds{1}_{j\not\in J(i_{k+1})} \breve{y}_k^j \right] = \sum_{i=1}^n p_i \Big [\mathds{1}_{j \in J(i)} \big( \bar {y}_{k+1}^j \big) + \mathds{1}_{j \not \in J(i)} \breve{y}_k^j\Big]\\
&=\sum_{i\in I(j)} p_i \bar{y}_{k+1}^j + \sum_{i\not\in I(j)} p_i \breve{y}_k^j = \sum_{i\in I(j)} p_i \bar{y}_{k+1}^j + \sum_{i=1}^n p_i \breve{y}_k^j - \sum_{i\in I(j)} p_i \breve{y}_k^j \\
&= \breve{y}_k^j + \pi_j \left( \bar{y}_{k+1}^j - \breve{y}_k^j \right).
\end{align*}
For the second inequality, we estimate as
\begin{align}
\mathbb{E}_k \left[ \| \breve{y}_{k+1} - Y \|^2_{\gamma}\right] &= \mathbb{E}_k \left[ \sum_{j\in J(i_{k+1})} \gamma_j(\bar{y}_{k+1}^j - Y^j)^2 + \sum_{j\not\in J(i_{k+1})} \gamma_j(\breve{y}_k^j - Y^j)^2 \right]\notag \\
&=\sum_{i=1}^n p_i \left[\sum_{j\in J(i)} \gamma_j(\bar{y}_{k+1}^j - Y^j)^2 + \sum_{j\not\in J(i)} \gamma_j(\breve{y}_k^j - Y^j)^2 \right] \notag\\
&=\sum_{j=1}^m \sum_{i \in I(j)} p_i \gamma_j(\bar{y}_{k+1}^j - Y^j)^2 + \sum_{j=1}^m\sum_{i=1}^n p_i \gamma_j(\breve{y}_k^j - Y^j)^2 - \sum_{j=1}^m \sum_{i\in I(j)} p_i \gamma_j(\breve{y}_k^j - Y^j)^2 \notag \\
&= \| \bar{y}_{k+1} -Y \|^2_{\gamma \pi} + \| \breve{y}_k - Y \|^2_{\gamma} - \| \breve{y}_k - Y \|^2_{\gamma \pi}.\notag
\end{align}
We derive the third equality using similar estimations
\begin{align}
\mathbb{E}_k \left[ l(\breve{y}_{k+1}) \right] &= \mathbb{E}_k \left[ \sum_{j\in J(i_{k+1})} l_j(\bar{y}_{k+1}^j) + \sum_{j\not\in J(i_{k+1})} l_j(\breve{y}_k^j) \right]\notag \\
&=\sum_{i=1}^n p_i \left[ \sum_{j\in J(i)} l_j(\bar{y}_{k+1}^j) + \sum_{j\not\in J(i)} l_j(\breve{y}_k^j) \right] \notag\\
&=\sum_{j=1}^m \pi_j l_j(\bar{y}_{k+1}^j) +\sum_{j=1}^m \sum_{i=1}^n p_i l_j(\breve{y}_k^j) - \sum_{j=1}^m \pi_j l_j(\breve{y}_k^j) \notag\\
&= l_{\pi}(\bar{y}_{k+1}) - l_\pi(\breve{y}_k) + l(\breve{y}_k).\notag
\end{align}

For the fourth inequality, we use the definitions of both $\breve{y}_{k+1}$ and $y_{k+1}$ (see Algorithm~\ref{alg:stripd}),
\begin{align}
\mathbb{E}_k \left[\| \breve{y}_{k+1} - y_{k+1} \|^2_{\gamma}\right] &= \mathbb{E}_k \left[ \sum_{j\in J(i_{k+1})} \gamma_j \left( \bar{y}_{k+1}^j - (\bar{y}_{k+1}^j + \sigma_j \theta_j A_{j, i_{k+1}} ({x}_{k+1}^{i_{k+1}} - x_k^{i_{k+1}}))  \right)^2 +\sum_{j\not\in J(i_{k+1})}\gamma_j \left( \breve{y}_k^j - y_k^j \right)^2 \right] \notag\\
&=\sum_{i=1}^n p_i \left[ \sum_{j\in J(i)}\gamma_j \left( \sigma_j \theta_j A_{j, i}  (\bar{x}_{k+1}^i - x_k^i) \right)^2 + \sum_{j\not\in J(i)} \gamma_j \left( \breve{y}_k^j-y_k^j \right)^2 \right] \notag\\
&= \sum_{i=1}^n p_i \sum_{j\in J(i)} \gamma_j \sigma_j^2 A_{j, i}^2\theta_i^2 (\bar{x}_{k+1}^i - x_k^i )^2 + \sum_{i=1}^n p_i \sum_{j=1}^m \gamma_j (\breve{y}_k^j - y_k^j)^2 - \sum_{i=1}^n \sum_{j\in J(i)} p_i \gamma_j (\breve{y}_k^j - y_k^j)^2 \notag\\
&= \| \bar{x}_{k+1} - x_k \|^2_{B(\gamma)} + \| \breve{y}_k - y_k \|^2_{\gamma} - \| \breve{y}_k - y_k \|^2_{\gamma\pi},\label{eq: lem4_pf}
\end{align}
where for the second equality, we used the fact that $x_{k+1}$ is different from $x_k$ only on the coordinate $i_{k+1}$, which gives
\begin{equation}
(A(x_{k+1}-x_k))_j = (A(x_{k+1}^{i_{k+1}}-x_k^{i_{k+1}})e_{i_{k+1}})_j = A_{j, i_{k+1}}(\bar{x}_{k+1}^{i_{k+1}}-x_k^{i_{k+1}}),\notag
\end{equation} 
and for the last equality, as $A_{j, i} = 0, \forall j \not \in J(i)$,
\begin{align}
B(\gamma)_i = p_i \sum_{j=1}^m \gamma_j \sigma_j^2 \theta_j^2 A_{j, i}^2.\notag
\end{align}
For the fifth inequality, we first take full expectation and then sum the inequality~\eqref{eq: lem4_pf}
\begin{align}
\sum_{k=1}^K \mathbb{E} \left[\| \breve{y}_k - y_k \|^2_{\gamma \pi} \right] \leq \sum_{k=1}^K \mathbb{E} \left[ \| \bar{x}_{k+1} - x_k \|^2_{B(\gamma)}\right] + \| y_1 - \breve{y}_1 \|^2_{\gamma}.\label{eq: sum_brev_norm}
\end{align}
Then, we write from the second inequality that
\begin{align}
\mathbb{E}_k\left[ \| \breve{y}_{k+1} - \breve{y}_k \|^2_{\gamma}\right] = \| \bar{y}_{k+1} - \breve{y}_k \|^2_{\gamma\pi} \leq 2 \| \bar{y}_{k+1} - y_k \|^2_{\gamma\pi} + 2 \| \breve{y}_k - y_k\|^2_{\gamma\pi}.\notag
\end{align}
We take full expectation and sum to get
\begin{align}
\sum_{k=1}^K \mathbb{E} \left[\| \breve{y}_{k+1} - \breve{y}_k\|^2_{\gamma}\right] &\leq 2\sum_{k=1}^K \mathbb{E}\left[\| \bar{y}_{k+1} - y_k \|^2_{\gamma\pi}\right] + 2\sum_{k=1}^K \mathbb{E} \left[\|\breve{y}_k - y_k \|^2_{\gamma\pi}\right] \notag\\
&\leq 2\sum_{k=1}^K \mathbb{E} \left[\| \bar{y}_{k+1} - y_k \|^2_{\gamma\pi} \right]+ 2 \sum_{k=1}^K \mathbb{E} \left[\| \bar{x}_{k+1} - x_k \|^2_{B(\gamma)}\right],\notag
\end{align}
where we have used~\eqref{eq: sum_brev_norm} and the fact that $\breve{y}_1 = y_1$.

For the last inequality, we note that
\begin{align}
\E{\| \breve{y}_k \|^2_{\gamma \pi}} \leq 2\E{\| y_k \|^2_{\gamma\pi}} + 2\E{\| \breve{y}_k - y_k \|^2_{\gamma\pi}},\notag
\end{align}
and we use~\eqref{eq: sum_brev_norm} with $\breve{y}_1=y_1$, for the second term.\qedhere
\end{proof}
We continue with the restatement and the proof of~\Cref{th:erg}.
This length of the proof is due to the complications discussed earlier.
First, as discussed in~\cite{alacaoglu2019convergence}, the order of expectation and supremum requires a special proof which delays taking expectations of the estimates (which prohibits simplifications and results in long expressions).~\Cref{lem: lem_erg2} thus can be seen as a version of~\Cref{lem: lem1} with expectation not taken.

However, this is not enough due to the special structure of our new method suited for sparse settings.
In particular, we have to manipulate the terms with dual variable carefully, as we cannot average $\bar y_k$ (see~\Cref{lem: lem1}).
Therefore, the treatment with $\breve y_k$, which is characterized in~\Cref{lem: breve} and~\Cref{lem: lem_erg3}, is the intricate part of our proof.
\begin{lemma}\label{lem: lem_erg2}
Let~\Cref{asmp: asmp1} hold.
Given the definitions of $D_p$ and $D_d$ given from~\Cref{lem: lem1},
it follows that
\begin{align}
0 &\geq D_p(x_{k+1}; z) + \underline p D_d(\bar{y}_{k+1}; z) - (1-\underline p) D_p(x_k; z)+\tilde{V}(\bar{z}_{k+1} - z_k) + \frac{1}{2} \| \bar{x}_{k+1} - x_k \|^2_{\beta P} + S_1 + S_2 \notag \\
&+\frac{\underline p}{2} \| x - x_{k+1} \|^2_{\tau^{-1}P^{-1}} - \frac{\underline p}{2} \| x-x_k \|^2_{\tau^{-1}P^{-1}}+ \frac{\underline p}{2} \| y-y_{k+1} \|^2_{\sigma^{-1}\pi^{-1}} - \frac{\underline p}{2}\| y - y_k \|^2_{\sigma^{-1}\pi^{-1}},\notag
\end{align}
where
\begin{align}
S_1 &= g_P(\bar{x}_{k+1}) - g_P(x_k) - \left(g(x_{k+1}) - g(x_k) \right) \notag \\
&-f(x_{k+1}) + f(x_k) -\underline p f(x_k)+\underline p f(x) - \underline p \langle \nabla f(x_k), x-x_k \rangle - \langle \nabla f(x_k), P(x_k -\bar{x}_{k+1}) \rangle \notag \\
&+ \frac{\underline p}{2} \|\bar{x}_{k+1}\|^2_{\tau^{-1}} - \frac{\underline p}{2} \| x_k \|^2_{\tau^{-1}} -\left(\frac{\underline p}{2} \|x_{k+1}\|^2_{\tau^{-1}P^{-1}} -\frac{\underline p}{2} \| x_k\|^2_{\tau^{-1}P^{-1}} \right) \notag \\
&+\frac{\underline p}{2} \| \bar{y}_{k+1} \|^2_{\sigma^{-1}} - \frac{\underline p}{2} \| y_k \|^2_{\sigma^{-1}} + \underline p\langle \bar{y}_{k+1},  \pi^{-1}\theta AP(\bar{x}_{k+1} - x_k) \rangle + \frac{\underline p}{2} \sum_{i=1}^n p_i \sum_{j=1}^m  \pi_j^{-1} \sigma_j \theta_j^2 A_{j, i}^2 (\bar{x}_{k+1}^i - x_k^i)^2 \notag\\
&-\frac{\underline p}{2} \bigg( \| y_{k+1}\|^2_{\sigma^{-1}\pi^{-1}} - \| y_k \|^2_{\sigma^{-1}\pi^{-1}} \bigg),\notag \\
S_2 &= \langle y, A(x_k - x_{k+1}) - AP(x_k - \bar{x}_{k+1}) \rangle +\underline p \langle x, x_k - \bar{x}_{k+1} - P^{-1}(x_k-x_{k+1}) \rangle_{\tau^{-1}}\notag\\
&-\underline p \langle y, \pi^{-1}\sigma^{-1}(y_k-y_{k+1}) - \sigma^{-1}(y_k -\bar{y}_{k+1}) + \pi^{-1}\theta AP(\bar{x}_{k+1}-x_k) \rangle. \notag
\end{align}
\end{lemma}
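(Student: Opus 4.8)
The plan is to retrace the proof of Lemma~\ref{lem: lem1} essentially verbatim up to the point where conditional expectations are introduced, but to refrain from taking any conditional expectation. Concretely, I would start from exactly the same two prox-and-convexity inequalities for $g_i$ and $h^\ast$, sum the first over $i = 1, \ldots, n$, add the second, and arrive at the analogue of~\eqref{eq: eq1_lastline}. This inequality is deterministic given $\mathcal{F}_k$ except for the appearance of $\bar{x}_{k+1}, \bar{y}_{k+1}$, which are $\mathcal{F}_k$-measurable, so nothing random has yet entered and no averaging has been performed.

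The crucial difference comes at the steps that, in Lemma~\ref{lem: lem1}, invoke the identities~\eqref{eq: gp_exp},~\eqref{eq: x_cond} and Lemma~\ref{lem: y_lem} to replace random quantities by their conditional means. Here I would instead keep the genuine random iterates $x_{k+1}, y_{k+1}$ and, wherever the earlier proof silently used $\mathbb{E}_k[\cdot]$, add and subtract the corresponding conditional expectation. The resulting residuals are precisely the auxiliary sums $S_1$ and $S_2$: the $z$-independent residuals — the gap between $g(x_{k+1})$ and $g_P(\bar{x}_{k+1}) - g_P(x_k) + g(x_k)$ from~\eqref{eq: gp_exp}, the gap between $f(x_{k+1})$ and the mean used in the smoothness bound~\eqref{eq: one_lem_bi3}, and the differences between the quadratic forms $\|x_{k+1}\|^2_{\tau^{-1}P^{-1}}$, $\|y_{k+1}\|^2_{\sigma^{-1}\pi^{-1}}$ and their $\bar x_{k+1}, \bar y_{k+1}$ counterparts coming from~\eqref{eq: x_cond} and Lemma~\ref{lem: y_lem} — are collected into $S_1$, while the residuals that are linear in the test point $z=(x,y)$, originating from the bilinear manipulations~\eqref{eq: one_lem_bi1} and~\eqref{eq: one_lem_bi2}, are collected into $S_2$.

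As in Lemma~\ref{lem: lem1}, I would use the choice $\theta_j = \pi_j / \underline p$ from~\eqref{eq: theta_choice} to cancel the deterministic part of the cross terms, so that the remainder of the bilinear expressions is exactly the $z$-dependent part placed in $S_2$; the coordinatewise smoothness bound~\eqref{eq: one_lem_bi3} contributes the term $\tfrac12\|\bar x_{k+1} - x_k\|^2_{\beta P}$, and regrouping the $\tau^{-1}$ and $\sigma^{-1}$ quadratics as in~\eqref{eq: one_lem_bi4} reproduces $\tilde V(\bar z_{k+1} - z_k)$ unchanged. Collecting all surviving pieces yields exactly the claimed inequality, with the $V$-type distances now appearing without any expectation in front of them.

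A useful consistency check, and in fact the organizing principle of the whole argument, is that $\mathbb{E}_k[S_1] = 0$ and $\mathbb{E}_k[S_2] = 0$: each summand of $S_1$ and $S_2$ has the form (random quantity) minus (its $\mathcal{F}_k$-conditional expectation), using $\mathbb{E}_k[x_{k+1} - x_k] = P(\bar x_{k+1} - x_k)$ together with Lemma~\ref{lem: x_lem} and Lemma~\ref{lem: y_lem}; taking $\mathbb{E}_k$ of the whole inequality therefore collapses $S_1, S_2$ and recovers Lemma~\ref{lem: lem1} exactly. The main obstacle is not any single estimate but the bookkeeping: one must locate every place where the proof of Lemma~\ref{lem: lem1} implicitly averaged, reinstate the genuine iterate, and sort the resulting residual into the $z$-independent bucket $S_1$ or the $z$-linear bucket $S_2$ so that these two sums match the stated expressions term by term. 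Because the expressions are long and several residuals mix primal and dual contributions, the care lies in this partition rather than in the individual inequalities.
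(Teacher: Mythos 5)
Your overall route is the same as the paper's: start from the deterministic inequality \eqref{eq: eq1_lastline}, refrain from taking conditional expectations, introduce the genuine random iterates $x_{k+1}, y_{k+1}$ by adding and subtracting, cancel the deterministic bilinear cross terms with the choice $\theta_j = \pi_j/\underline p$, bound $g_P(x')$ via Lemma~\ref{lem: x_lem}, and collect the leftovers into a $z$-independent bucket $S_1$ and a $z$-linear bucket $S_2$. That is exactly how the paper proceeds.

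However, your stated organizing principle contains a genuine error: $\mathbb{E}_k[S_1] = 0$ is false, and not every summand of $S_1$ is of the form ``random quantity minus its $\mathcal{F}_k$-conditional expectation.'' The $g$-block and the two quadratic blocks of $S_1$ are indeed zero-mean (by \eqref{eq: gp_exp}, \eqref{eq: x_cond} and Lemma~\ref{lem: y_lem} with $X=0$, $Y=0$), and $\mathbb{E}_k[S_2]=0$ is correct. But the $f$-block of $S_1$, namely $-f(x_{k+1}) + f(x_k) - \underline p f(x_k) + \underline p f(x) - \underline p\langle \nabla f(x_k), x - x_k\rangle - \langle \nabla f(x_k), P(x_k - \bar x_{k+1})\rangle$, is not a martingale-type residual: since $f$ is neither separable nor quadratic, $\mathbb{E}_k[f(x_{k+1})]$ has no closed-form expression in terms of $\bar x_{k+1}$, and the convexity gap $\underline p \left(f(x) - f(x_k) - \langle \nabla f(x_k), x - x_k\rangle\right)$ is $\mathcal{F}_k$-measurable and in general strictly positive. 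The correct statement---and the reason the lemma keeps the term $\frac12 \|\bar x_{k+1} - x_k\|^2_{\beta P}$ explicit alongside $S_1$ rather than folding it into $\tilde V$---is that $\mathbb{E}\left[S_1 + \frac12\|\bar x_{k+1} - x_k\|^2_{\beta P}\right] \geq 0$, which the paper establishes later (see \eqref{eq: exp1}) using convexity and coordinatewise smoothness of $f$; it is an inequality, not an identity. For the same reason you cannot invoke the smoothness bound \eqref{eq: one_lem_bi3} inside this proof, as you suggest: that bound would plant a conditional expectation $\mathbb{E}_k[f(x_{k+1})]$ into a statement that must be expectation-free; instead, the $\beta P$ term enters by pure addition and subtraction, with the negative copy absorbed into $C(\tau)$ through \eqref{eq: one_lem_bi4} and the positive copy left in the statement. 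Consequently, sorting residuals by your zero-mean rule would misplace the $f$-terms and yield an $S_1$ different from the one stated, and taking $\mathbb{E}_k$ of the finished inequality recovers Lemma~\ref{lem: lem1} only after the $f$-inequalities are applied, not ``exactly'' by collapse.
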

\begin{proof}
We now follow the proof of Lemma 1 without taking conditional expectations, similar to ergodic convergence rate proof of~\citet{alacaoglu2019convergence}.

First, we have, from~\eqref{eq: eq1_lastline}
\begin{align}
g_P(x') + \underline{p} h^\ast(y) &\geq \underbrace{g_P(\bar{x}_{k+1}) + \underline{p}h^\ast(\bar{y}_{k+1}) - \langle A^\top\bar{y}_{k+1}, P(x'-\bar{x}_{k+1}) \rangle + \underline p \langle Ax_k, y-\bar{y}_{k+1}\rangle}_{T_1} \notag \\
&\underbrace{- \langle \nabla f (x_k), P(x'-\bar{x}_{k+1}) \rangle}_{T_2} + \underbrace{\frac{1}{2} \Big( \| x_k - \bar{x}_{k+1}\|^2_{\tau^{-1}P} + \| x'- \bar{x}_{k+1}\|^2_{\tau^{-1}P} - \| x'- x_k \|^2_{\tau^{-1}P} \Big)}_{T_3} \notag\\
&+\underbrace{\frac{\underline p}{2} \left( \| y_k - \bar{y}_{k+1}\|^2_{\sigma^{-1}} + \| y- \bar{y}_{k+1}\|^2_{\sigma^{-1}} - \| y- y_k \|^2_{\sigma^{-1}} \right)}_{T_4}.\label{eq: erg_lem1_eq1}
\end{align}
We start with $T_1$ and add and subtract $\langle A^\top y, x_{k+1} - x\rangle - \underline p \langle Ax, \bar{y}_{k+1} - y \rangle + \langle A^\top y, x_k - x \rangle - \underline p \langle A^\top y, x_k - x\rangle + g(x_{k+1}) + g_P(x_k) - g(x_k) - \underline p \langle y- \bar{y}_{k+1} , \pi^{-1}\theta  AP( x_k - \bar{x}_{k+1})\rangle$
\begin{align}
T_1 &= g(x_{k+1}) - g(x_k) + g_P(x_k) + g_P(\bar{x}_{k+1}) - g(x_{k+1}) + g(x_k) - g_P(x_k) + \underline p h^\ast(\bar{y}_{k+1})  \notag \\
&- \langle A^\top\bar{y}_{k+1}, P(x'-\bar{x}_{k+1}) \rangle + \underline p \langle Ax_k, y-\bar{y}_{k+1}\rangle +\underline p \langle y- \bar{y}_{k+1}, \pi^{-1}\theta AP(x_k - \bar{x}_{k+1}) \rangle \notag \\
&+\langle A^\top y, x_{k+1} - x\rangle - \underline p \langle Ax, \bar{y}_{k+1} - y \rangle + \langle A^\top y, x_k - x \rangle - \underline p \langle A^\top y, x_k - x\rangle\notag \\
&-\langle A^\top y, x_{k+1} - x\rangle + \underline p \langle Ax, \bar{y}_{k+1} + y \rangle - \langle A^\top y, x_k - x \rangle + \underline p \langle A^\top y, x_k - x\rangle \notag \\
&-\underline p \langle y- \bar{y}_{k+1}  , \pi^{-1}\theta AP(x_k - \bar{x}_{k+1})\rangle .\label{eq: erg_t11}
\end{align}
We  first use $x'^i = p_i^{-1}\underline p x^i + (1-p_i^{-1}\underline p)x_k^i$ as in Lemma~\ref{lem: x_lem} to get
\begin{align}
- \langle A^\top\bar{y}_{k+1}, P(x'-\bar{x}_{k+1}) \rangle = -\underline p \langle A^\top \bar{y}_{k+1}, x-x_k\rangle - \langle A^\top \bar{y}_{k+1}, P(x_k - \bar{x}_{k+1}) \rangle.\notag
\end{align}
Next, we use that
\begin{multline}
\underline p\left[ -\langle A^\top y, x_k - x \rangle + \langle Ax, \bar{y}_{k+1} - y \rangle + \langle Ax_k , y-\bar{y}_{k+1} \rangle - \langle A^\top\bar{y}_{k+1}, x-x_k \rangle  \right] = \\
\underline p\left[ \langle A^\top (y-\bar{y}_{k+1}), x-x_k \rangle + \langle y-\bar{y}_{k+1}, A(x_k - x) \rangle \right] = 0,\notag
\end{multline}
to obtain
\begin{align}
T_1 &= g(x_{k+1}) - g(x_k) + g_P(x_k) + g_P(\bar{x}_{k+1}) - g(x_{k+1}) + g(x_k) - g_P(x_k) + \underline p h^\ast(\bar{y}_{k+1})  \notag \\
&- \langle A^\top \bar{y}_{k+1}, P(x_k - \bar{x}_{k+1}) \rangle  +\underline p \langle y- \bar{y}_{k+1}, \pi^{-1}\theta AP(x_k - \bar{x}_{k+1}) \rangle \notag \\
&+\langle A^\top y, x_{k+1} - x\rangle - \underline p \langle Ax, \bar{y}_{k+1} - y \rangle + \langle A^\top y, x_k - x \rangle \notag \\
&-\langle A^\top y, x_{k+1} - x\rangle  - \langle A^\top y, x_k - x \rangle + \underline p \langle A^\top y, x_k - x\rangle \notag \\
&-\underline p \langle y- \bar{y}_{k+1}  , \pi^{-1}\theta AP(x_k - \bar{x}_{k+1})\rangle .\label{eq: erg_t112}
\end{align}

Then, we rearrange~\eqref{eq: erg_t112} in a similar way to~\Cref{lem: lem1} to get
\begin{align}
T_1 &= g(x_{k+1}) - g(x_k) + g_P(x_k) +\underline p h^\ast(\bar{y}_{k+1}) +\langle x_{k+1}-x, A^\top y\rangle - \underline p \langle Ax, \bar{y}_{k+1} - y \rangle + (\underline p -1)\langle x_k - x, A^\top y \rangle \notag \\
&+ g_P(\bar{x}_{k+1}) - g(x_{k+1}) + g(x_k) - g_P(x_k) + \underline p \langle y- \bar{y}_{k+1} , \pi^{-1}\theta AP(x_k - \bar{x}_{k+1} )\rangle \notag \\
&+ \langle A^\top y, x_k - x_{k+1} \rangle - \langle A^\top\bar{y}_{k+1}, P(x_k - \bar{x}_{k+1}) \rangle - \langle y-\bar{y}_{k+1}, \underline p\pi^{-1}\theta A P(x_k - \bar{x}_{k+1}) \rangle.\notag
\end{align}
We now use the rule $\theta_j = \frac{\pi_j}{\underline p}$ to have $\underline p \pi^{-1}\theta=I$, consequently $T_1$ simplifies to
\begin{align}
T_1 &= g(x_{k+1}) - g(x_k) + g_P(x_k) + \underline p h^\ast(\bar{y}_{k+1}) +\langle x_{k+1}-x, A^\top y\rangle - \underline p \langle Ax, \bar{y}_{k+1} - y \rangle + (\underline p -1)\langle x_k - x, A^\top y \rangle \notag \\
&+ g_P(\bar{x}_{k+1}) - g(x_{k+1}) + g(x_k) - g_P(x_k) + \langle y, A(x_k - x_{k+1}) - AP(x_k - \bar{x}_{k+1})\rangle \notag \\
&+\underline p\langle y- \bar{y}_{k+1}, \pi^{-1}\theta AP(x_k - \bar{x}_{k+1}) \rangle.\notag
\end{align}
We recall the definitions of $D_p$ and $D_d$ to write $T_1$ as
\begin{align}
T_1&= D_p(x_{k+1}; z) - f(x_{k+1})+f(x) + \underline p D_d(\bar{y}_{k+1}; z) + \underline p h^\ast(y) - (1-\underline p) D_p(x_k; z) -\underline p \left( g(x_k) - g(x) \right) + g_P(x_k)\notag \\
&+ g_P(\bar{x}_{k+1}) - g(x_{k+1}) + g(x_k) - g_P(x_k) + \langle y, A(x_k - x_{k+1}) - AP(x_k - \bar{x}_{k+1})\rangle \notag \\
&+\underline p\langle y- \bar{y}_{k+1}, \pi^{-1}\theta AP(x_k - \bar{x}_{k+1}) \rangle + (1-\underline p)(f(x_k) - f(x)).\notag
\end{align}
Second, for $T_2$, we use $x' = P^{-1}\underline p x + (1-P^{-1}\underline p)x_k = x_k + P^{-1}\underline p(x-x_k)$ to obtain
\begin{align}
&T_2 = - \langle \nabla f (x_k), P(x'-\bar{x}_{k+1})  = - \underline p\langle \nabla f (x_k), x-  x_k \rangle - \langle \nabla f (x_k), P( x_k- \bar{x}_{k+1}) \rangle.\notag
\end{align}
We now combine these two estimates
\begin{align}
T_1 + T_2 &= D_p(x_{k+1}; z) + \underline p D_d(\bar{y}_{k+1}; z) - (1-\underline p) D_p(x_k; z) + g_P(x_k) + \underline p g(x) -\underline pg(x_k) + \underline p h^\ast(y) \notag \\
&-f(x_{k+1}) + f(x_k) -\underline p f(x_k)+\underline p f(x) - \underline p \langle \nabla f(x_k), x-x_k \rangle - \langle \nabla f(x_k), P(x_k -\bar{x}_{k+1}) \rangle \notag \\
&+ g_P(\bar{x}_{k+1}) - g(x_{k+1}) + g(x_k) - g_P(x_k) + \langle y, A(x_k - x_{k+1}) - AP(x_k - \bar{x}_{k+1})\rangle \notag \\
&+\underline p\langle y- \bar{y}_{k+1}, \pi^{-1}\theta AP(x_k - \bar{x}_{k+1}) \rangle.\label{eq: t1t2}
\end{align}
We now work on $T_3$ in~\eqref{eq: erg_lem1_eq1}, in order to make terms depending on $x$ telescope.
First, we note that by Lemma~\ref{lem: x_lem}, with the slight change of using $\bar{x}_{k+1}$ instead of $x_{k+1}$ and $\tau^{-1}P$ instead of $\tau^{-1}$ in the metric, we get
\begin{align}
\frac{1}{2} \|x' - \bar{x}_{k+1} \|^2_{\tau^{-1}P} - \frac{1}{2} \| x' - x_k \|^2_{\tau^{-1}P} &= \frac{\underline p}{2} \| x-\bar{x}_{k+1}\|^2_{\tau^{-1}} - \frac{\underline p}{2} \| x- x_k \|^2_{\tau^{-1}} \notag \\
& +\frac{1}{2} \| \bar{x}_{k+1} -x_k\|^2_{\tau^{-1}P} - \frac{\underline p}{2} \| \bar{x}_{k+1} - x_k \|^2_{\tau^{-1}}.\notag
\end{align}
Thus, on $T_3$, we add and subtract $\frac{\underline p}{2} \| x - x_{k+1} \|^2_{\tau^{-1}P^{-1}} - \frac{\underline p}{2} \| x-x_k \|^2_{\tau^{-1}P^{-1}}$
\begin{align}
T_3 &= \|\bar{x}_{k+1} - x_k\|^2_{\tau^{-1}P} - \frac{\underline p}{2} \| \bar{x}_{k+1} - x_k \|^2_{\tau^{-1}} +\frac{\underline p}{2} \| x - x_{k+1} \|^2_{\tau^{-1}P^{-1}} - \frac{\underline p}{2} \| x-x_k \|^2_{\tau^{-1}P^{-1}} \notag \\
&+ \frac{\underline p}{2} \| x - \bar{x}_{k+1} \|^2_{\tau^{-1}} - \frac{\underline p}{2} \| x-x_k \|^2_{\tau^{-1}} - \left(\frac{\underline p}{2} \| x - x_{k+1} \|^2_{\tau^{-1}P^{-1}} - \frac{\underline p}{2} \| x-x_k \|^2_{\tau^{-1}P^{-1}} \right) \notag \\
&=\|\bar{x}_{k+1} - x_k\|^2_{\tau^{-1}P} - \frac{\underline p}{2} \| \bar{x}_{k+1} - x_k \|^2_{\tau^{-1}} +\frac{\underline p}{2} \| x - x_{k+1} \|^2_{\tau^{-1}P^{-1}} - \frac{\underline p}{2} \| x-x_k \|^2_{\tau^{-1}P^{-1}} + \frac{\underline p}{2} \|\bar{x}_{k+1}\|^2_{\tau^{-1}} \notag \\
&- \frac{\underline p}{2} \| x_k \|^2_{\tau^{-1}} + \underline p \langle x, x_k - \bar{x}_{k+1} \rangle_{\tau^{-1}} - \bigg( \frac{\underline p}{2} \|x_{k+1}\|^2_{\tau^{-1}P^{-1}} -\frac{\underline p}{2} \| x_k\|^2_{\tau^{-1}P^{-1}} + \underline p \langle x, (x_k - x_{k+1}) \rangle_{\tau^{-1}P^{-1}} \bigg) \notag \\
&=\|\bar{x}_{k+1} - x_k\|^2_{\tau^{-1}P} - \frac{\underline p}{2} \| \bar{x}_{k+1} - x_k \|^2_{\tau^{-1}} +\frac{\underline p}{2} \| x - x_{k+1} \|^2_{\tau^{-1}P^{-1}} - \frac{\underline p}{2} \| x-x_k \|^2_{\tau^{-1}P^{-1}} \notag \\
&+ \frac{\underline p}{2} \|\bar{x}_{k+1}\|^2_{\tau^{-1}} - \frac{\underline p}{2} \| x_k \|^2_{\tau^{-1}} -\left(\frac{\underline p}{2} \|x_{k+1}\|^2_{\tau^{-1}P^{-1}} -\frac{\underline p}{2} \| x_k\|^2_{\tau^{-1}P^{-1}} \right) + \underline p \langle x, x_k - \bar{x}_{k+1} - P^{-1}\left( x_k -x_{k+1} \right) \rangle_{\tau^{-1}}. \label{eq: t3}
\end{align}
We estimate $T_4$ in~\eqref{eq: erg_lem1_eq1} similarly.
First note that $\theta_j = \frac{\pi_j}{\underline p}$ and on $T_4$, we add and subtract $\frac{\underline p}{2} \bigg( \| y-y_{k+1} \|^2_{\sigma^{-1}\pi^{-1}} - \| y-y_k\|^2_{\sigma^{-1}\pi^{-1}} + 2\langle y-\bar{y}_{k+1}, \pi^{-1}\theta AP(x_k - \bar{x}_{k+1}) \rangle + \sum_{i=1}^n p_i \sum_{j=1}^m \pi_j^{-1}\sigma_j \theta_j^2 A_{j,i} \left( \bar{x}_{k+1}^i -x_k^i \right)^2 \bigg)$
\begin{align}
&T_4 = \frac{\underline p}{2} \| y_k - \bar{y}_{k+1} \|^2_{\sigma^{-1}} + \frac{\underline p}{2} \| y-y_{k+1} \|^2_{\sigma^{-1}\pi^{-1}} - \frac{\underline p}{2} \| y-y_k \|^2_{\sigma^{-1}\pi^{-1}} \notag \\
&+\frac{\underline p}{2} \| y-\bar{y}_{k+1} \|^2_{\sigma^{-1}} - \frac{\underline p}{2} \| y - y_k \|^2_{\sigma^{-1}} + \underline p \langle \bar{y}_{k+1} - y, \pi^{-1} \theta A P(\bar{x}_{k+1} - x_k) \rangle+ \frac{\underline p}{2} \sum_{i=1}^n p_i \sum_{j=1}^m \pi_j^{-1} \theta_j^2 \sigma_j A_{j,i}^2 (\bar{x}_{k+1}^i - x_k^i)^2\notag\\
&-\bigg( \frac{\underline p}{2} \| y-y_{k+1}\|^2_{\sigma^{-1}\pi^{-1}} - \frac{\underline p}{2}\| y - y_k \|^2_{\sigma^{-1}\pi^{-1}} \bigg) \notag\\
&- \underline p \langle \bar{y}_{k+1} - y, \pi^{-1} \theta A P(\bar{x}_{k+1} - x_k) \rangle - \frac{\underline p}{2} \sum_{i=1}^n p_i \sum_{j=1}^m \pi_j^{-1} \sigma_j\theta_j^2 A_{j,i}^2 (\bar{x}_{k+1}^i - x_k^i)^2 \notag\\
&= \frac{\underline p}{2} \| y_k - \bar{y}_{k+1}\|^2_{\sigma^{-1}} + \frac{\underline p}{2} \| y-y_{k+1} \|^2_{\sigma^{-1}\pi^{-1}} - \frac{\underline p}{2}\| y - y_k \|^2_{\sigma^{-1}\pi^{-1}} \notag\\
&+\frac{\underline p}{2} \| \bar{y}_{k+1} \|^2_{\sigma^{-1}} - \frac{\underline p}{2} \| y_k \|^2_{\sigma^{-1}} + \underline p\langle \bar{y}_{k+1},  \pi^{-1}\theta AP(\bar{x}_{k+1} - x_k) \rangle + \frac{\underline p}{2} \sum_{i=1}^n p_i \sum_{j=1}^m  \pi_j^{-1} \sigma_j \theta_j^2 A_{j, i}^2 (\bar{x}_{k+1}^i - x_k^i)^2 \notag\\
&+\underline p \langle y, y_k - \bar{y}_{k+1} \rangle_{\sigma^{-1}} -\underline p \langle y, \pi^{-1}\theta AP(\bar{x}_{k+1} - x_k) \rangle \notag\\
&-\frac{\underline p}{2} \bigg( \| y_{k+1}\|^2_{\sigma^{-1}\pi^{-1}} - \| y_k \|^2_{\sigma^{-1}\pi^{-1}} \bigg) - \underline p \langle y, y_k - y_{k+1} \rangle_{\sigma^{-1}\pi^{-1}} \notag\\
&-\underline p \langle \bar{y}_{k+1} - y, \pi^{-1} \theta AP (\bar{x}_{k+1} - x_k) \rangle - \frac{\underline p}{2} \sum_{i=1}^n p_i \sum_{j=1}^m  \pi_j^{-1} \sigma_j \theta_j^2 A_{j,i}^2  (\bar{x}_{k+1}^i - x_k^i)^2.\label{eq: t4}
\end{align}
To simplify, let us introduce some more definitions to have simpler expression when we combine $T_1, T_2, T_3, T_4$ from~\cref{eq: t1t2,eq: t3,eq: t4},
\begin{align}
S_1 &= g_P(\bar{x}_{k+1}) - g_P(x_k) - \left(g(x_{k+1}) - g(x_k) \right) \notag \\
&-f(x_{k+1}) + f(x_k) -\underline p f(x_k)+\underline p f(x) - \underline p \langle \nabla f(x_k), x-x_k \rangle - \langle \nabla f(x_k), P(x_k -\bar{x}_{k+1}) \rangle \notag \\
&+ \frac{\underline p}{2} \|\bar{x}_{k+1}\|^2_{\tau^{-1}} - \frac{\underline p}{2} \| x_k \|^2_{\tau^{-1}} -\left(\frac{\underline p}{2} \|x_{k+1}\|^2_{\tau^{-1}P^{-1}} -\frac{\underline p}{2} \| x_k\|^2_{\tau^{-1}P^{-1}} \right) \notag \\
&+\frac{\underline p}{2} \| \bar{y}_{k+1} \|^2_{\sigma^{-1}} - \frac{\underline p}{2} \| y_k \|^2_{\sigma^{-1}} + \underline p\langle \bar{y}_{k+1},  \pi^{-1}\theta AP(\bar{x}_{k+1} - x_k) \rangle + \frac{\underline p}{2} \sum_{i=1}^n p_i \sum_{j=1}^m  \pi_j^{-1} \sigma_j \theta_j^2 A_{j, i}^2 (\bar{x}_{k+1}^i - x_k^i)^2 \notag\\
&-\frac{\underline p}{2} \bigg( \| y_{k+1}\|^2_{\sigma^{-1}\pi^{-1}} - \| y_k \|^2_{\sigma^{-1}\pi^{-1}} \bigg)\label{eq: def_s1} \\
S_2 &= \langle y, A(x_k - x_{k+1}) - AP(x_k - \bar{x}_{k+1}) \rangle +\underline p \langle x, x_k - \bar{x}_{k+1} - P^{-1}(x_k-x_{k+1}) \rangle_{\tau^{-1}}\notag\\
&-\underline p \langle y, \pi^{-1}\sigma^{-1}(y_k-y_{k+1}) - \sigma^{-1}(y_k -\bar{y}_{k+1}) + \pi^{-1}\theta AP(\bar{x}_{k+1}-x_k) \rangle\notag
\end{align}
We can now collect $T_1, T_2, T_3, T_4$, and use the definitions of $S_1, S_2$, in~\eqref{eq: erg_lem1_eq1}
\begin{align}
g_P(x') + \underline p h^\ast(y) &\geq D_p(x_{k+1}; z) + \underline p D_d(\bar{y}_{k+1}; z) - (1-\underline p) D_p(x_k; z) + g_P(x_k) + \underline p g(x) -\underline pg(x_k) +\underline ph^\ast(y) \notag \\
&+\frac{\underline p}{2} \| x - x_{k+1} \|^2_{\tau^{-1}P^{-1}} - \frac{\underline p}{2} \| x-x_k \|^2_{\tau^{-1}P^{-1}}+ \frac{\underline p}{2} \| y-y_{k+1} \|^2_{\sigma^{-1}\pi^{-1}} - \frac{\underline p}{2}\| y - y_k \|^2_{\sigma^{-1}\pi^{-1}} \notag \\
&+\|\bar{x}_{k+1} - x_k\|^2_{\tau^{-1}P} - \frac{\underline p}{2} \| \bar{x}_{k+1} - x_k \|^2_{\tau^{-1}} - \frac{\underline p}{2} \sum_{i=1}^n p_i \sum_{j=1}^m  \pi_j^{-1} \sigma_j \theta_j^2 A_{j,i}^2  (\bar{x}_{k+1}^i - x_k^i)^2 \notag \\
&-\frac{1}{2} \| \bar{x}_{k+1} - x_k \|^2_{\beta P} +\frac{1}{2} \| \bar{x}_{k+1} - x_k \|^2_{\beta P}  +\frac{\underline p}{2} \| y_k - \bar{y}_{k+1}\|^2_{\sigma^{-1}}+ S_1 + S_2.\label{eq: main5}
\end{align}
We make few observations on this inequality.
First, by Lemma~\ref{lem: x_lem}, as in~\eqref{eq: gp}
\begin{equation}
g_P(x_k) + \underline p g(x) - \underline p g(x_k) - g_P(x') \geq 0.\notag
\end{equation}
Second, we have, as in~\eqref{eq: one_lem_bi4}
\begin{align}
\tilde{V}(\bar{z}_{k+1}-z_k) &= \frac{\underline p}{2} \| \bar{y}_{k+1} - y_k \|^2_{\sigma^{-1}} +  \| \bar{x}_{k+1} - x_k\|^2_{\tau^{-1}P} - \frac{\underline p}{2} \| \bar{x}_{k+1}-x_k\|^2_{\tau^{-1}} \notag\\
&-\frac{1}{2} \sum_{i=1}^n \sum_{j=1}^m \underline p p_i \pi_j^{-1} \sigma_j \theta_j^2A_{j, i}^2  (\bar{x}_{k+1}^i - x_k^i)^2 -\frac{1}{2} \| \bar{x}_{k+1} - x_k \|^2_{\beta P} \notag\\
&= \frac{\underline p}{2} \| \bar{y}_{k+1} - y_k \|^2_{\sigma^{-1}} + \frac{\underline p}{2} \| \bar{x}_{k+1} - x_k\|^2_{C(\tau)},\notag
\end{align}
where
\begin{align}
C(\tau)_i &= \frac{2p_i}{\underline p \tau_i} - \frac{1}{\tau_i} - p_i\sum_{j=1}^m \pi_j^{-1}\sigma_j \theta_j^2A_{j, i}^2 - \frac{\beta_ip_i}{ \underline p}.\notag
\end{align}
We use these estimates in~\eqref{eq: main5}
\begin{align*}
0 &\geq D_p(x_{k+1}; z) + \underline p D_d(\bar{y}_{k+1}; z) - (1-\underline p) D_p(x_k; z)+\tilde{V}(\bar{z}_{k+1} - z_k) + \frac{1}{2} \| \bar{x}_{k+1} - x_k \|^2_{\beta P} + S_1 + S_2  \\
&+\frac{\underline p}{2} \| x - x_{k+1} \|^2_{\tau^{-1}P^{-1}} - \frac{\underline p}{2} \| x-x_k \|^2_{\tau^{-1}P^{-1}}+ \frac{\underline p}{2} \| y-y_{k+1} \|^2_{\sigma^{-1}\pi^{-1}} - \frac{\underline p}{2}\| y - y_k \|^2_{\sigma^{-1}\pi^{-1}}.\qedhere
\end{align*}
\end{proof}
\begin{lemma}\label{lem: lem_erg3}
Let~\Cref{asmp: asmp1} hold and let $h$ be separable. Given the definitions of $D_p$ and $D_d$ from~\Cref{lem: lem1}, we define
\begin{align}
D_d^\gamma(\bar{y}_{k+1}; z) = \sum_{j=1}^m \gamma_j \left(h^\ast_j(\bar{y}_{k+1}^j) - h^\ast_j(y^j) - \langle (Ax)^j, \bar{y}_{k+1}^j - y^j \rangle\right).\notag
\end{align}
Moreover let $S_1, S_2$ be as~\Cref{lem: lem_erg2}, and $\breve y_k$ as~\Cref{lem: breve}, it follows that
\begin{align}
0 &\geq \underline p D_p(x_{k+1}; z) + \underline p D_d(\breve{y}_{k+1}; z) + \tilde{V}(\bar{z}_{k+1} - z_k) + \frac{1}{2} \| \bar{x}_{k+1} - x_k \|^2_{\beta P} + S_1 + S_2 \notag \\
&+\frac{\underline p}{2} \| x - x_{k+1} \|^2_{\tau^{-1}P^{-1}} - \frac{\underline p}{2} \| x-x_k \|^2_{\tau^{-1}P^{-1}}+ \frac{\underline p}{2} \| y-y_{k+1} \|^2_{\sigma^{-1}\pi^{-1}} - \frac{\underline p}{2}\| y - y_k \|^2_{\sigma^{-1}\pi^{-1}} \notag \\
&+ (1-\underline p) D_p(x_{k+1}; z) - (1-\underline p) D_p(x_k; z)+ \underline p D_d^{\pi^{-1}-I}(\breve y_{k+1}; z) - \underline p D_d^{\pi^{-1}-I}(\breve y_{k}; z)\notag \\
&+ \underline p h^\ast(\bar{y}_{k+1}) - \underline p h^\ast(\breve{y}_k) - \underline p\left( h^\ast_{\pi^{-1}}(\breve{y}_{k+1}) - h^\ast_{\pi^{-1}}(\breve{y}_k)\right)+\underline p \langle Ax, \breve{y}_k - \bar{y}_{k+1} - \pi^{-1}\left( \breve{y}_k - \breve{y}_{k+1} \right) \rangle.\notag
\end{align}
\end{lemma}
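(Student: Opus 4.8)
The plan is to obtain Lemma~\ref{lem: lem_erg3} directly from the pre-expectation one-iteration bound of Lemma~\ref{lem: lem_erg2} by rewriting its two ``gap'' terms, $D_p(x_{k+1};z)$ and $\underline p D_d(\bar y_{k+1};z)$, into the telescoping-ready form built around $\breve y_k$. For the primal term this is immediate: I would split $D_p(x_{k+1};z) = \underline p D_p(x_{k+1};z) + (1-\underline p)D_p(x_{k+1};z)$, which produces the leading summand of the target together with the piece $(1-\underline p)D_p(x_{k+1};z)$ on the third line; the term $-(1-\underline p)D_p(x_k;z)$ already present in Lemma~\ref{lem: lem_erg2} is carried over unchanged, and $\tilde V(\bar z_{k+1}-z_k)$, $\tfrac12\|\bar x_{k+1}-x_k\|^2_{\beta P}$, $S_1$, $S_2$ and the four quadratic telescoping terms are identical in both statements. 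Hence the whole content of the lemma reduces to a single identity for the dual term.

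The heart of the argument is therefore the deterministic identity
\begin{multline*}
\underline p D_d(\bar y_{k+1};z) = \underline p D_d(\breve y_{k+1};z) + \underline p D_d^{\pi^{-1}-I}(\breve y_{k+1};z) - \underline p D_d^{\pi^{-1}-I}(\breve y_{k};z) + \underline p h^\ast(\bar y_{k+1}) - \underline p h^\ast(\breve y_k) \\ - \underline p\bigl(h^\ast_{\pi^{-1}}(\breve y_{k+1}) - h^\ast_{\pi^{-1}}(\breve y_k)\bigr) + \underline p\langle Ax,\, \breve y_k - \bar y_{k+1} - \pi^{-1}(\breve y_k - \breve y_{k+1})\rangle.
\end{multline*}
To verify it I would exploit the separability of $h$ and write, for a diagonal weight $\gamma$, $D_d^\gamma(w;z) = \phi_\gamma(w) - \phi_\gamma(y)$ where $\phi_\gamma(w) = \sum_j \gamma_j\bigl(h^\ast_j(w^j) - \langle (Ax)^j, w^j\rangle\bigr) = h^\ast_\gamma(w) - \langle \gamma Ax, w\rangle$, with the unweighted case $\phi = \phi_I$ giving $D_d(w;z) = \phi(w) - \phi(y)$. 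First I would combine $D_d(\breve y_{k+1};z) + D_d^{\pi^{-1}-I}(\breve y_{k+1};z) = D_d^{\pi^{-1}}(\breve y_{k+1};z)$, so that the entire right-hand side collapses to $\phi_{\pi^{-1}}(\breve y_{k+1}) - \phi_{\pi^{-1}-I}(\breve y_k) - \phi(y)$ plus the explicit correction block (here the weighted evaluations at $y$ recombine into $-\phi(y)$ since $\phi_{\pi^{-1}} - \phi_{\pi^{-1}-I} = \phi_I$). Expanding $\phi_\gamma = h^\ast_\gamma - \langle\gamma Ax,\cdot\rangle$ and collecting coefficients blockwise, every contribution at $\breve y_{k+1}$, namely the $h^\ast_{\pi^{-1}}(\breve y_{k+1})$ and $\langle \pi^{-1}Ax, \breve y_{k+1}\rangle$ pieces, cancels against the correction terms, and likewise every contribution at $\breve y_k$ cancels once the $\pi^{-1}-I$ weighting is split into its $\pi^{-1}$ and $I$ parts; what survives is exactly $h^\ast(\bar y_{k+1}) - \langle Ax, \bar y_{k+1}\rangle$ at $\bar y_{k+1}$ together with $-h^\ast(y) + \langle Ax, y\rangle$ at $y$, that is $\phi(\bar y_{k+1}) - \phi(y) = D_d(\bar y_{k+1};z)$, as required.

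Substituting this identity into the bound of Lemma~\ref{lem: lem_erg2} and regrouping yields the claimed inequality. I expect the only real work to be careful bookkeeping of the cancellations rather than any genuine analytic difficulty: the inequality of Lemma~\ref{lem: lem_erg2} is preserved intact and every manipulation is an equality. It is worth emphasizing that the identity holds for \emph{arbitrary} vectors in the roles of $\bar y_{k+1}$, $\breve y_{k+1}$, $\breve y_k$, so the statistical properties of $\breve y_k$ recorded in Lemma~\ref{lem: breve} are not needed at this stage. Their purpose is downstream: the reorganized form isolates a $\pi^{-1}$-weighted gap difference $D_d^{\pi^{-1}-I}(\breve y_{k+1};z) - D_d^{\pi^{-1}-I}(\breve y_k;z)$ and $h^\ast_{\pi^{-1}}$-differences that telescope upon summation, plus residual terms $h^\ast(\bar y_{k+1}) - h^\ast(\breve y_k)$ and the cross term $\langle Ax, \breve y_k - \bar y_{k+1} - \pi^{-1}(\breve y_k - \breve y_{k+1})\rangle$ whose conditional expectations are controlled precisely by the formulas of Lemma~\ref{lem: breve}, which is where the averaged dual iterate in~\Cref{th:erg} becomes tractable.
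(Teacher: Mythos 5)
Your proposal is correct and follows essentially the same route as the paper: both reduce the lemma to the single deterministic identity rewriting $D_d(\bar y_{k+1};z)$ in terms of $D_d(\breve y_{k+1};z)$, the telescoping weighted gaps $D_d^{\pi^{-1}-I}(\breve y_{k+1};z)-D_d^{\pi^{-1}-I}(\breve y_k;z)$, and the explicit correction block, and then substitute into Lemma~\ref{lem: lem_erg2} (the paper verifies the identity by adding and subtracting $h^\ast_{I-\pi^{-1}}$ and $\langle Ax,(I-\pi^{-1})(\breve y_{k+1}-\breve y_k)\rangle$ terms, while you organize the same cancellation through the $\phi_\gamma$ bookkeeping, which is only a cosmetic difference). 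Your observation that the identity holds for arbitrary vectors and that the probabilistic content of Lemma~\ref{lem: breve} is only used downstream is also consistent with the paper's proof, which is purely algebraic at this stage.
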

\begin{proof}
This lemma is the intricate part of the proof of~\Cref{th:erg}. 
If we finish the estimations as in~\citet{alacaoglu2019convergence}, then we will end up needing to average $\bar{y}_k$.
However, this is not feasible in our algorithm, since we do not update full dual vector, thus we do not compute $\bar{y}_k$ unless the data is fully dense.
We will use Lemma~\ref{lem: breve} to go from $\bar{y}_k$ to $\breve{y}_k$.
Let us repeat the definition of $\breve{y}_k$ from Lemma~\ref{lem: breve}:
Let $\breve{y}_1 = y_1 = \bar{y}_1$, and
\begin{equation}
\begin{aligned}
&\breve{y}_{k+1}^j = \bar{y}_{k+1}^j, &&\forall j \in J(i_{k+1}) \notag\\
&\breve{y}_{k+1}^j = \breve{y}_k^j, &&\forall j\not\in J(i_{k+1}).\notag
\end{aligned}
\end{equation}
We now work on $D_d(\bar{y}_{k+1}; z)$ and note that $h^\ast_{\gamma}$ is defined as in~\Cref{lem: breve}.
\begin{align}
D_d(\bar{y}_{k+1} ;z) &= D_d(\breve{y}_{k+1}; z) - \langle Ax, \bar{y}_{k+1} \rangle + h^\ast(\bar{y}_{k+1}) + \langle Ax, \breve{y}_{k+1} \rangle - h^\ast(\breve{y}_{k+1}) \notag\\
&= D_d(\breve{y}_{k+1}; z) - \langle Ax, \bar{y}_{k+1} \rangle + h^\ast(\bar{y}_{k+1}) + \langle Ax, \breve{y}_{k+1} \rangle - h^\ast(\breve{y}_{k+1}) \notag\\
&+ h^\ast_{I - \pi^{-1}}(\breve{y}_{k+1}) - h^\ast_{I - \pi^{-1}}(\breve{y}_{k+1}) + h^\ast_{I - \pi^{-1}}(\breve{y}_{k}) - h^\ast_{I - \pi^{-1}}(\breve{y}_{k}) \notag\\
&+ \langle Ax, (I - \pi^{-1})(\breve{y}_{k+1}-\breve{y}_k) \rangle -  \langle Ax, (I - \pi^{-1})(\breve{y}_{k+1}-\breve{y}_k) \notag \\
&=D_d( \breve{y}_{k+1}; z) + h^\ast(\bar{y}_{k+1}) - h^\ast(\breve{y}_k) - \left( h^\ast_{\pi^{-1}}(\breve{y}_{k+1}) - h^\ast_{\pi^{-1}}(\breve{y}_k)\right) \notag\\
&+ h^\ast_{\pi^{-1}-1}(\breve{y}_{k+1}) - h^\ast_{\pi^{-1}-1}(\breve{y}_k) +\langle Ax, \breve{y}_k - \bar{y}_{k+1} - \pi^{-1}\left( \breve{y}_k - \breve{y}_{k+1} \right) \rangle \notag \\
&+ \langle Ax, (I - \pi^{-1})(\breve{y}_{k+1} - \breve{y}_k) \rangle \notag \\
&=D_d( \breve{y}_{k+1}; z) + h^\ast(\bar{y}_{k+1}) - h^\ast(\breve{y}_k) - \left( h^\ast_{\pi^{-1}}(\breve{y}_{k+1}) - h^\ast_{\pi^{-1}}(\breve{y}_k)\right) \notag\\
&+ \langle Ax, \breve{y}_k - \bar{y}_{k+1} - \pi^{-1}\left( \breve{y}_k - \breve{y}_{k+1} \right) \rangle + D_d^{\pi^{-1}-I}(\breve y_{k+1}; z) - D_d^{\pi^{-1} - I}(\breve y_k; z)\notag
\end{align}
We can insert this estimate into the result of~\Cref{lem: lem_erg2}
\begin{align*}
0 &\geq \underline p D_p(x_{k+1}; z) + \underline p D_d(\breve{y}_{k+1}; z) + \tilde{V}(\bar{z}_{k+1} - z_k) + \frac{1}{2} \| \bar{x}_{k+1} - x_k \|^2_{\beta P} + S_1 + S_2 \notag \\
&+\frac{\underline p}{2} \| x - x_{k+1} \|^2_{\tau^{-1}P^{-1}} - \frac{\underline p}{2} \| x-x_k \|^2_{\tau^{-1}P^{-1}}+ \frac{\underline p}{2} \| y-y_{k+1} \|^2_{\sigma^{-1}\pi^{-1}} - \frac{\underline p}{2}\| y - y_k \|^2_{\sigma^{-1}\pi^{-1}} \notag \\
&+ (1-\underline p) D_p(x_{k+1}; z) - (1-\underline p) D_p(x_k; z)+ \underline p D_d^{\pi^{-1}-I}(\breve y_{k+1}; z) - \underline p D_d^{\pi^{-1}-I}(\breve y_{k}; z)\notag \\
&+ \underline p h^\ast(\bar{y}_{k+1}) - \underline p h^\ast(\breve{y}_k) - \underline p\left( h^\ast_{\pi^{-1}}(\breve{y}_{k+1}) - h^\ast_{\pi^{-1}}(\breve{y}_k)\right)+\underline p \langle Ax, \breve{y}_k - \bar{y}_{k+1} - \pi^{-1}\left( \breve{y}_k - \breve{y}_{k+1} \right) \rangle.\qedhere
\end{align*}
\end{proof}

The following lemma is similar to~\citep[Lemma 4.8]{alacaoglu2019convergence}.
\begin{lemma}\label{lem: random_proc_lemma}
Given a Euclidean space $\mathcal{S}$, a fixed diagonal matrix $\gamma \succeq 0$, let the random sequences $u_{k}, v_{k} \in \mathcal{S}$ be $\mathcal{F}_k$-measurable with 
\begin{equation}
u_{k+1} = v_{k+1} - \Ec{k}{v_{k+1}}.\notag
\end{equation} 
Let $\tilde x_1$ be arbitrary and set for $k \geq 1$,
\begin{equation}
\tilde x_{k+1} = \tilde x_{k} - u_{k+1}.\notag
\end{equation}
Then, $\tilde x_k$ is $\mathcal{F}_k$-measurable and we have for any $S \subset \mathcal{S}$
\begin{equation}
\E{\sup_{x\in{S}} \left\{\sum_{k=1}^K \langle x, u_{k+1} \rangle_{\gamma} -\frac{1}{2} \| \tilde x_1 - x \|^2_{\gamma}\right\}} \leq \frac{1}{2} \sum_{k=1}^K \E{\| v_{k+1} \|_{\gamma}^2}.\notag
\end{equation}
\begin{proof}
First, by the definition of $\tilde x_{k+1}$, it follows that for all $x\in\mathcal{S}$
\begin{equation}
\| \tilde x_{k+1} - x \|^2_{\gamma} = \| \tilde x_k - x \|^2_{\gamma} - 2\langle \tilde x_k - x, u_{k+1} \rangle_{\gamma} + \| u_{k+1} \|^2_{\gamma}.\notag
\end{equation}
Summing this inequality gives
\begin{equation}
\sum_{k=1}^K \langle x, u_{k+1} \rangle_\gamma -\frac{1}{2}\| \tilde x_1 - x \|^2_\gamma \leq - \sum_{k=1}^K \langle \tilde x_k, u_{k+1} \rangle_\gamma + \sum_{k=1}^K \frac{1}{2}\| u_{k+1}\|^2_\gamma.\notag
\end{equation}
We take first supremum and then expectation of both sides to get
\begin{equation}
\mathbb{E} \left[ \sup_{x\in S}\left\{- \frac{1}{2}\| \tilde x_1 - x \|^2_\gamma+\sum_{k=1}^K \langle x, u_{k+1} \rangle_\gamma\right\}\right] \leq  - \sum_{k=1}^K \mathbb{E}\left[ \langle \tilde x_k, u_{k+1} \rangle_\gamma \right] + \sum_{k=1}^K \frac{1}{2}\mathbb{E}\left[ \| u_{k+1}\|^2_\gamma \right].\notag
\end{equation}
By the law of total expectation, $\mathcal{F}_k$-measurability of $\tilde x_k$ and $\Ec{k}{u_{k+1}} = 0$, we have 
\begin{equation}
\sum_{k=1}^K \mathbb{E} \left[ \langle \tilde x_k, u_{k+1}\rangle_\gamma \right] = \sum_{k=1}^K \mathbb{E}\left[\mathbb{E}_k \left[ \langle \tilde x_k, u_{k+1}\rangle_\gamma \right]\right] = \sum_{k=1}^K \mathbb{E}\left[ \langle \tilde x_k, \Ec{k}{u_{k+1}} \rangle_\gamma \right] = 0.\notag
\end{equation}
Finally, we use the definition of $u_k$ and the inequality $\E{\| X - \E{X} \|^2} \leq \E{\| X \|^2}$ which holds for any random variable $X$.
\end{proof}
\end{lemma}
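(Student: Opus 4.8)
The plan is to run a completing-the-square (``ghost iterate'') argument on the auxiliary sequence $\tilde x_k$, exploiting that its increments $u_{k+1}$ form a martingale difference sequence. First I would record the two structural facts the argument rests on. Since $v_{k+1}$ is $\mathcal{F}_{k+1}$-measurable while $\mathbb{E}_k[v_{k+1}]$ is $\mathcal{F}_k$-measurable, the increment $u_{k+1}=v_{k+1}-\mathbb{E}_k[v_{k+1}]$ satisfies $\mathbb{E}_k[u_{k+1}]=0$; and a one-line induction (base case: $\tilde x_1$ is deterministic; step: $\tilde x_{k+1}=\tilde x_k\pm u_{k+1}$ with $u_{k+1}$ being $\mathcal{F}_{k+1}$-measurable) gives that $\tilde x_k$ is $\mathcal{F}_k$-measurable. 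This is the ancillary claim of the lemma, and it is exactly what will let the cross term vanish at the end.

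Next I would expand the squared $\gamma$-seminorm along the recursion, obtaining a relation of the form $\|\tilde x_{k+1}-x\|_\gamma^2=\|\tilde x_k-x\|_\gamma^2\pm 2\langle \tilde x_k-x,u_{k+1}\rangle_\gamma+\|u_{k+1}\|_\gamma^2$, and then solve for the isolated term $\langle x,u_{k+1}\rangle_\gamma$. Summing over $k=1,\dots,K$ telescopes the consecutive squared distances, leaving only the boundary terms in $\|\tilde x_1-x\|_\gamma^2$ and $\|\tilde x_{K+1}-x\|_\gamma^2$; discarding the terminal one (it is nonnegative) turns the identity into
\[
\sum_{k=1}^K\langle x,u_{k+1}\rangle_\gamma-\tfrac12\|\tilde x_1-x\|_\gamma^2\le \sum_{k=1}^K\langle \tilde x_k,u_{k+1}\rangle_\gamma+\tfrac12\sum_{k=1}^K\|u_{k+1}\|_\gamma^2.
\]
The decisive feature is that the right-hand side no longer depends on the optimization variable $x$: the sequence $\tilde x_k$ is generated from $\tilde x_1$ and the $u_j$'s alone. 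Hence I may take $\sup_{x\in S}$ on the left while leaving the right untouched, and only afterwards take full expectation.

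Finally I would remove the martingale cross term: by the tower rule, $\mathcal{F}_k$-measurability of $\tilde x_k$, and $\mathbb{E}_k[u_{k+1}]=0$, each summand obeys $\mathbb{E}[\langle \tilde x_k,u_{k+1}\rangle_\gamma]=\mathbb{E}[\langle \tilde x_k,\mathbb{E}_k[u_{k+1}]\rangle_\gamma]=0$, so the entire sum drops out. For the remaining quadratic term I would apply the conditional variance estimate $\mathbb{E}_k\|v_{k+1}-\mathbb{E}_k[v_{k+1}]\|_\gamma^2=\mathbb{E}_k\|v_{k+1}\|_\gamma^2-\|\mathbb{E}_k[v_{k+1}]\|_\gamma^2\le \mathbb{E}_k\|v_{k+1}\|_\gamma^2$, equivalently $\mathbb{E}\|u_{k+1}\|_\gamma^2\le\mathbb{E}\|v_{k+1}\|_\gamma^2$, which is precisely how the $v_{k+1}$ on the right of the claimed bound enters. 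I expect the only genuinely delicate point to be the interchange of supremum and expectation: one cannot simply bound $\mathbb{E}[\sup_x(\cdots)]$ by $\sup_x\mathbb{E}[\cdots]$, and the whole device is arranged so that the supremum is applied to a \emph{pathwise} inequality whose majorant is $x$-free, after which taking expectation is harmless. Everything else is routine algebra together with the standard $\mathbb{E}\|X-\mathbb{E}X\|^2\le\mathbb{E}\|X\|^2$ inequality.
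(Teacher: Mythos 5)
Your proposal is correct and follows essentially the same route as the paper's proof: expand the squared $\gamma$-norm along the ghost-iterate recursion, telescope and discard the nonnegative terminal distance, take the supremum of the \emph{pathwise} inequality (whose right-hand side is $x$-free) before taking expectation, annihilate the cross term via the tower rule, $\mathcal{F}_k$-measurability of $\tilde x_k$, and $\mathbb{E}_k[u_{k+1}]=0$, and finish with the variance bound $\mathbb{E}\|v_{k+1}-\mathbb{E}_k[v_{k+1}]\|_\gamma^2\le\mathbb{E}\|v_{k+1}\|_\gamma^2$. The only (immaterial) difference is a sign convention: your telescoped inequality is the one matching the recursion $\tilde x_{k+1}=\tilde x_k+u_{k+1}$, whereas the paper writes $\tilde x_{k+1}=\tilde x_k-u_{k+1}$ and correspondingly carries $-\sum_{k}\langle \tilde x_k,u_{k+1}\rangle_\gamma$ on the right; since $\{-u_{k+1}\}$ is also a centered sequence with the same norms, both conventions yield the identical bound.
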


As mentioned in the main text, we will give the theorems in the appendix with tighter, but more complicated constants. After~\Cref{cor: erg}, we show how we obtained the simplified bounds in our main text.
\begin{reptheorem}{th:erg}
Let Assumption~\ref{asmp: asmp1} hold and $\theta, \tau, \sigma$ are chosen as in~\eqref{eq: theta_choice},~\eqref{eq: ss_choice}. Moreover, let $h$ be separable.

We define $x^{av}_K = \frac{1}{K} \sum_{k=1}^K x_{k}$ and $y^{av}_K = \frac{1}{K} \sum_{k=1}^K \breve{y}_{k}$, where $\breve{y}_k$ is defined in~\eqref{eq: breve_def}, then it holds that for any bounded set $\mathcal{C}=\mathcal{C}_x\times\mathcal{C}_y\subset\mathcal{Z}$
\begin{align}
\mathbb{E} \left[G_\mathcal{C}(x^{av}_K, y^{av}_K) \right] \leq \frac{C_{g}}{\underline p K},\notag
\end{align}
where $C_g = C_{g,1} + C_{g, 2} + C_{g, 3} + C_{g, 4}$, $C_{\tau, \tilde V} =  \min_i C(\tau)_i\tau_i$,\\
$C_{g, 1} = \sup_{z\in \mathcal{C}} \big\{2\underline p \| x_0 - x\|^2_{\tau^{-1}P^{-1}} + 2\underline p\| y_0 - y \|^2_{\sigma^{-1}\pi^{-1}} \big\} + (1-\underline p)4\sqrt{\Delta\underline p^{-1}}\|A\| \sup_{y\in \mathcal{C}_y}\|y\|_{\tau P} \\
+ \underline p \sqrt{ \Delta_0\underline p^{-1} - \| 2P-\underline p\| \Delta_0\underline p^{-2}C_{\tau, \tilde V}^{-1} }\|A\|\|\pi^{-1} - I \| \sup_{x\in\mathcal{C}_x}\| x\|_{\sigma \pi}$, \\
$C_{g, 2} = \|2P - \underline p \| \Big(1+\| P/\underline p \| + \| \tau^{1/2}P^{1/2}A^\top \pi^{-1/2}\sigma^{1/2}\|^2 \Big) \frac{2\Delta_0}{\underline p C_{\tau, \tilde V}} + \frac{\Delta_0}{C_{\tau, \tilde V}} + (4+4\|\tau^{1/2}P^{1/2}A^\top \pi^{-1/2}\sigma^{1/2}\|^2)\Delta_0$,
$C_{g, 3} = (1-\underline p)\left(f(x_0) + g(x_0) - f(x_\star)  - g(x_\star) + \| A^\top y_\star \|_{\tau P} \sqrt{\Delta_0 \underline p^{-1}}\right)$,\\
$C_{g, 4} = \underline p h^\ast_{\pi^{-1}-I}(\breve y_0) + \underline p\sum_{j=1}^n (\pi_j^{-1}-1)h^\ast_j(\breve y_\star^j) + \frac{\underline p}{2} \| Ax_\star \|^2_{\sigma\pi^{-1}}
+\Delta_0 + \frac{\| 2P - \underline p \|\Delta_0}{\underline p C_{\tau, \tilde V}}$.
\end{reptheorem}
\begin{proof}
We start with the result of~\Cref{lem: lem_erg3}.
First, we will manipulate the terms arising in $S_2 + \langle Ax, \breve{y}_k -\bar{y}_{k+1} - \pi^{-1}(\breve{y}_k - \breve{y}_{k+1})$ (see definition of $S_2$ in~\Cref{lem: lem_erg2}).
\begin{multline}
-\left(S_2 + \underline p \langle Ax, \breve{y}_k -\bar{y}_{k+1} - \pi^{-1}(\breve{y}_k - \breve{y}_{k+1})\right)\rangle = -\langle y, A(x_k - x_{k+1}) - AP(x_k - \bar{x}_{k+1}) \rangle\\
-\underline p \langle x, x_k - \bar{x}_{k+1} - P^{-1}(x_k-x_{k+1}) \rangle_{\tau^{-1}} +\underline p \langle y, \pi^{-1}\sigma^{-1}(y_k-y_{k+1})- \sigma^{-1}(y_k -\bar{y}_{k+1}) + \pi^{-1}\theta AP(\bar{x}_{k+1}-x_k) \rangle\\ 
- \underline p \langle Ax, \breve{y}_k -\bar{y}_{k+1} - \pi^{-1}(\breve{y}_k - \breve{y}_{k+1})\rangle \label{eq: erg_eq1_4terms}
\end{multline}
For the four terms on the right hand side, we will apply~\Cref{lem: random_proc_lemma}.
We note first, $\mathbb{E}_k \left[ \pi^{-1}\left( \breve{y}_k - \breve{y}_{k+1} \right) \right] = \breve{y}_k - \bar{y}_{k+1}$ from Lemma~\ref{lem: breve}, $\mathbb{E}_k \left[ P^{-1}(x_k - x_{k+1}) = x_k-\bar{x}_{k+1} \right]$, by coordinate wise updates.
Finally, as in the proof of Lemma~\ref{lem: breve}, we can derive, as $A_{j, i} = 0, \forall i \not \in I(j)$,
\begin{align*}
\mathbb E_k[y_{k+1}^j] &= \sum_{i=1}^n p_i \Big [\mathds{1}_{j \in J(i)} \big( \bar y_{k+1}^j + \sigma_j \theta_j A_{j,i} (\bar x_{k+1}^i - x_k^i) \big) + \mathds{1}_{j \not \in J(i)} y_k^j\Big] \\
& = y_k^j + \sum_{i \in I(j)} p_i \big( \bar y_{k+1}^j - y_k^j) + \sum_{i=1}^n p_i \sigma_j \theta_j A_{j,i}  (\bar x_{k+1}^i - x_k^i)  = y_k^j + \pi_j (\bar y_{k+1}^j - y_k^j) + \sigma_j\theta_j (A P (\bar x_{k+1} - x_k))_j \\
\mathbb E_k[y_{k+1}] &=y_k + \pi (\bar y_{k+1} - y_k) + \sigma\theta A P (\bar x_{k+1} - x_k) \iff \mathbb{E}_k\left[ y_k - y_{k+1} \right] = \pi (y_k - \bar{y}_{k+1}) -  \sigma \theta AP (\bar{x}_{k+1} - x_k).
\end{align*}
In particular, for~\eqref{eq: erg_eq1_4terms}, we set in~\Cref{lem: random_proc_lemma}
\begin{alignat}{4}
&u_{k+1} = -\underline p^{-1}\sigma\pi A(x_k - x_{k+1}) + \underline p^{-1}\sigma\pi AP(x_k - \bar{x}_{k+1}),&&\gamma = \sigma^{-1}\pi^{-1}\underline p, && \mathcal{S} = \mathcal{Y},~~~~&&\tilde x_1 = y_1. \notag \\
&u_{k+1} = (x_k - x_{k+1})-P(x_k - \bar{x}_{k+1}),&&\gamma = \tau^{-1}P^{-1},~~~~&& \mathcal{S} = \mathcal{X},~~~~&&\tilde x_1 = x_1, \notag \\
&u_{k+1} = \left( y_k - y_{k+1} \right) - \pi \left( y_k - \bar{y}_{k+1} \right) +  \sigma \theta AP(\bar{x}_{k+1} - x_k),~~~~&&\gamma = \sigma^{-1}\pi^{-1}, &&\mathcal{S} = \mathcal{Y},~~~~&&\tilde x_1 = y_1, \notag\\
&u_{k+1} = \tau P A^\top \left( \pi^{-1}\left( \breve{y}_k - \breve{y}_{k+1} \right) - \left(\breve{y}_k - \bar{y}_{k+1}\right) \right),&&\gamma = \tau^{-1}P^{-1}, && \mathcal{S} = \mathcal{X}, ~~~~&&\tilde x_1 = x_1,\notag
\end{alignat}

Then, we can apply~\Cref{lem: random_proc_lemma} for these cases to bound~\eqref{eq: erg_eq1_4terms} as
\begin{align}
\E{\sup_{z\in \mathcal{C}} \eqref{eq: erg_eq1_4terms}} &\leq \sup_{z\in \mathcal{C}} \left\{ \underline p \| x-x_1\|^2_{\tau^{-1}P^{-1}} + \underline p \| y-y_1\|^2_{\sigma^{-1}\pi^{-1}}\right\} + \sum_{k=1}^K \frac{\underline p}{2} \E{\| x_k - x_{k+1} \|^2_{\tau^{-1}P^{-1}} + \| y_k -y_{k+1} \|^2_{\sigma^{-1}\pi^{-1}}}\notag \\
&+\sum_{k=1}^K \frac{1}{2\underline p} \E{\|\sigma \pi A(x_k - x_{k+1}) \|^2_{\sigma^{-1}\pi^{-1}}} + \sum_{k=1}^K \frac{\underline p}{2}\E{\|\tau P A^\top\pi^{-1}(\breve y_k - \breve y_{k+1})\|^2_{\tau^{-1}P^{-1}}}.\label{eq: four_terms_bd}
\end{align}
We now recall the definition of $S_1$ from~\eqref{eq: def_s1}, and use the identities~\eqref{eq: gp_exp},~\eqref{eq: x_cond}, Lemma~\ref{lem: y_lem}, along with the law of total expectation to estimate
\begin{align}
\mathbb{E} \left[ S_1 + \frac{1}{2} \| \bar{x}_{k+1} - x_k \|^2_{\beta P} \right] &= \mathbb{E} \left[ -f(x_{k+1}) + f(x_k) - \langle \nabla f(x_k), P(x_k - \bar{x}_{k+1})\rangle + \frac{1}{2} \| \bar{x}_{k+1} - x_k \|^2_{\beta P} \right] \notag \\
&+\underline p \mathbb{E}\left[ f(x) - f(x_k)-\langle \nabla f(x_k), x-x_k\rangle \right] \notag \\
&\geq \mathbb{E} \left[ -f(x_{k+1}) + f(x_k) - \mathbb{E}_k \left[\langle\nabla  f(x_k), x_k - {x}_{k+1}\rangle\right] + \frac{1}{2} \| \bar{x}_{k+1} - x_k \|^2_{\beta P} \right]\notag \\
&\geq \mathbb{E}\left[ -\frac{1}{2} \mathbb{E}_k \left[\| x_k - x_{k+1} \|^2_{\beta}\right] + \frac{1}{2} \| \bar{x}_{k+1} -x_k \|^2_{\beta P} \right] \notag \\
&= \mathbb{E}\left[ -\frac{1}{2} \| x_k - \bar{x}_{k+1} \|^2_{\beta P} + \frac{1}{2} \| \bar{x}_{k+1} -x_k \|^2_{\beta P} \right] \notag \\
&=0,\label{eq: exp1}
\end{align}
where the first inequality is by convexity, second inequality is by coordinatewise smoothness of $f$.

Furthermore, for the result of~\Cref{lem: lem_erg3}, by Lemma~\ref{lem: breve} and the law of total expectation
\begin{equation}
\mathbb{E}\left[ h^\ast(\bar{y}_{k+1}) - h^\ast(\breve{y}_k) - \left( h^\ast_{\pi^{-1}}(\breve{y}_{k+1}) - h^\ast_{\pi^{-1}}(\breve{y}_k)\right) \right] = 0.\label{eq: exp2}
\end{equation}

We rearrange and sum the result of~\Cref{lem: lem_erg3}, take supremum and expectation, plug in~\cref{eq: four_terms_bd,eq: exp1,eq: exp2}, and use $\tilde V$ is a squared norm 
\begin{align}
\mathbb{E}\bigg[\sup_{z\in \mathcal{C}}\sum_{k=1}^K \underline p&\left(D_p(x_{k}) +  D_d(\breve y_{k})\right)\bigg] \leq \sup_{z\in \mathcal{C}} \frac{3\underline p}{2} \left( \| x-x_0 \|^2_{\tau^{-1}P^{-1}} + \| y-y_0 \|^2_{\sigma^{-1}\pi^{-1}} \right) \notag \\
&+\E{\sup_{z\in \mathcal{C}}(1-\underline p) \left( D_p(x_0; z) - D_p(x_{K}; z) \right)} + \E{\sup_{z\in \mathcal{C}} \underline pD_d^{\pi^{-1}-I}(\breve y_0; z) - \underline pD_d^{\pi^{-1}-I}(\breve y_{K}; z)}\notag \\
&+ \sum_{k=1}^K \frac{\underline p}{2} \E{\| x_k - x_{k+1} \|^2_{\tau^{-1}P^{-1}} + \| y_k -y_{k+1} \|^2_{\sigma^{-1}\pi^{-1}}} +\sum_{k=1}^K \frac{1}{2\underline p} \E{\|\sigma \pi A(x_k - x_{k+1}) \|^2_{\sigma^{-1}\pi^{-1}}} \notag \\
&+ \sum_{k=1}^K \frac{\underline p}{2}\E{\|\tau P A^\top\pi^{-1}(\breve y_k - \breve y_{k+1})\|^2_{\tau^{-1}P^{-1}}}.\label{eq: breg_tow_last}
\end{align}
We have
\begin{align}
&\E{\sup_{z\in\mathcal{C}}D_p(x_0; z) - D_p(x_{K}; z)} = \E{\sup_{z\in\mathcal{C}}D_p (x_0; (x_{K}, y))} \notag \\
&=\E{\sup_{z\in \mathcal{C}_y} f(x_0)+g(x_0) - f(x_K) - g(x_K) + \langle A^\top y, x_0 - x_K \rangle} \notag \\
&\leq \E{\sup_{z\in \mathcal{C}_y} f(x_0)+g(x_0) - f(x_K) - g(x_K) + \| A\| \| y\|_{\tau P} \| x_0 - x_K \|_{\tau^{-1}P^{-1}}}.
\end{align}
Then, we use the optimality conditions, convexity, and~\eqref{eq: as_eq1}
\begin{align}
\mathbb{E} \left[ f(x_{K}) + g(x_{K})\right] &\geq \mathbb{E}\left[ f(x_\star) + g(x_\star) - \langle A^\top y_\star, x_{K} - x_\star \rangle\right]\notag\\ &\geq \mathbb{E}\left[f(x_\star) + g(x_\star) - \| A^\top y_\star \|^2_{\tau P}\|x_{K} - x_\star \|_{\tau^{-1}P^{-1}}\right] \notag \\
&\geq  f(x_\star) + g(x_\star) - \| A^\top y_\star \|_{\tau P}\sqrt{\frac{\Delta_0}{\underline p}}.\notag
\end{align}
to obtain for this estimation
\begin{align}
\E{\sup_{z\in C} D_p(x_0; (x_K; y)) } \leq f(x_0) + g(x_0) - f(x_\star) - g(x_\star) &+ \| A^\top y_\star \|_{\tau P} \sqrt{\Delta_0 \underline p^{-1}} \notag \\
&+ 4\sqrt{\Delta_0\underline p^{-1}}\|A\| \sup_{y\in C_y} \| y \|_{\tau P}.\label{eq: dp_last}
\end{align}
We estimate similarly to obtain
\begin{multline}
\E{\sup_{z\in \mathcal{C}} \underline pD_d^{\pi^{-1}-I}(\breve y_0; z) - \underline pD_d^{\pi^{-1}-I}(\breve y_{K}; z)} = \underline p\E{ h^\ast_{\pi^{-1} - I}(\breve y_0) - h^\ast_{\pi^{-1}-I}(\breve y_{K}) - \langle Ax,(\pi^{-1}-I) (\breve y_0 - \breve y_{K}) \rangle }.\\
\leq \underline p\E{ h^\ast_{\pi^{-1} - I}(\breve y_0) - h^\ast_{\pi^{-1}-I}(\breve y_{K}) +\|A\|\|\pi^{-1} - I \| \| x\|_{\sigma \pi}\|\breve y_0 - \breve y_K \|_{\sigma^{-1}\pi^{-1}}  }.\label{eq: dd_new}
\end{multline}
Similarly,
\begin{align}
\mathbb{E} \left[ h^\ast_{\pi^{-1}-I}(\breve{y}_{K})\right] &=\mathbb{E} \left[ \sum_{j=1}^m (\pi_j^{-1} - 1) h^\ast_j (\breve{y}_{K}^j) \geq \sum_{j=1}^m (\pi_j^{-1} - 1) \left(h^\ast_j(y_\star^j) + \langle (Ax_\star)_j, \breve{y}_{K}^j - y_\star^j\rangle \right)\right] \notag \\
&\geq\mathbb{E} \left[\sum_{j=1}^m (\pi_j^{-1} - 1) h^\ast_j(y_\star^j) - \frac12\|Ax_\star \|^2_{\sigma \pi^{-1}} - \frac12 \| \breve y_{K} - y_\star \|^2_{\sigma^{-1}\pi^{-1}} \right] \notag \\
&\geq\mathbb{E} \left[\sum_{j=1}^m (\pi_j^{-1} - 1) h^\ast_j(y_\star^j) - \frac12\|Ax_\star \|^2_{\sigma \pi^{-1}} - \|y_{K} - y_\star \|^2_{\sigma^{-1}\pi^{-1}} - \sum_{k=1}^K \| \bar x_{k+1} - x_k \|^2_{B(\pi^{-1}\sigma^{-1})} \right] \notag \\
&\geq \sum_{j=1}^m (\pi_j^{-1} - 1) h^\ast_j(y_\star^j) - \frac12\|Ax_\star \|^2_{\sigma \pi^{-1}} - \frac{\Delta_0}{\underline p} - \frac{\| 2P - \underline p \|\Delta_0}{\underline p^2 C_{\tau, \tilde V}}, \notag
\end{align}
where we used
\begin{equation}
\| x\|^2_{B(\pi^{-1}\sigma^{-1})} \leq \frac{1}{\underline p} \| 2P - \underline p \| \| x \|^2_{\tau^{-1}},\label{eq: B_conv}
\end{equation}
which follows by using the step size rule from~\eqref{eq: ss_choice} and definition of $B(\gamma)$ from~\Cref{lem: breve}.

Thus, the final bound for~\eqref{eq: dd_new}
\begin{multline}
\E{\sup_{z\in \mathcal{C}} \underline pD_d^{\pi^{-1}-I}(\breve y_0; z) - \underline pD_d^{\pi^{-1}-I}(\breve y_{K}; z)} \leq \underline p h^\ast_{\pi^{-1}-I}(\breve y_0) + \underline p\sum_{j=1}^n (\pi_j^{-1}-1)h^\ast_j(\breve y_\star^j) + \frac{\underline p}{2} \| Ax_\star \|^2_{\sigma\pi^{-1}} \\
+\Delta_0 + \frac{\| 2P - \underline p \|\Delta_0}{\underline p C_{\tau, \tilde V}} + \underline p \sqrt{ \Delta_0\underline p^{-1} - \| 2P-\underline p\| \Delta_0\underline p^{-2}C_{\tau, \tilde V}^{-1} }\|A\|\|\pi^{-1} - I \| \sup_{x\in\mathcal{C}_x}\| x\|_{\sigma \pi}.\label{eq: dd_last}
\end{multline}

By~\eqref{eq: sum_of_subseq}
\begin{align}
&\sum_{k=1}^\infty \mathbb{E} \left[ \tilde{V}(\bar{z}_{k+1} - z_k) \right] \leq \Delta_0. \label{eq: vt_final_bd}\\
&\tilde V(\bar z_{k+1} - z_k )\geq \frac{\underline p C_{\tau, \tilde V}}{2} \| \bar x_{k+1} - x_k \|^2_{\tau^{-1}} + \frac{\underline p}{2} \| \bar y_{k+1} - y_k \|^2_{\sigma^{-1}} ,\label{eq: erg_conv}
\end{align}
with $C_{\tau, \tilde V} = \min_i C(\tau)_i\tau_i$, where we used the definition of $\tilde V$ from~\Cref{lem: lem1}.

We continue to estimate, by~\Cref{lem: y_lem} and the definition of $B(\gamma)$ from~\Cref{lem: breve}
\begin{align}
\Ec{k}{\| y_{k+1} - y_k \|^2_{\sigma^{-1}\pi^{-1}}} &= \| \bar y_{k+1} - y_k \|^2_{\sigma^{-1}} + 2\langle \bar y_{k+1} - y_k, \pi^{-1} \theta AP(\bar x_{k+1} - x_k) \rangle \notag\\
&\qquad\qquad\qquad\qquad\qquad\qquad\qquad\qquad\qquad
+ \sum_{i=1}^n \sum_{j=1}^m p_i \pi^{-1}_j \sigma_j \theta_j^2 A_{j, i}^2 (\bar x_{k+1}^i - x_k^i)^2 \notag \\
&\leq \| \bar y_{k+1} - y_k \|^2_{\sigma^{-1}} + \| \bar y_{k+1} - y_k \|^2_{\sigma^{-1}} + \sum_{i=1}^n \sum_{j=1}^m p_i^2 \pi_j^{-2}\sigma_j \theta_j^2 A_{j, i}^2 (\bar x_{k+1}^i - x_k^i)^2 \notag \\
&+ \sum_{i=1}^n \sum_{j=1}^m p_i \pi^{-1}_j \sigma_j \theta_j^2 A_{j, i}^2 (\bar x_{k+1}^i - x_k^i)^2 \notag \\
&\leq 2 \| \bar y_{k+1} - y_k \|^2_{\sigma^{-1}} + \left( 1+ \| P/\underline p\| \right)\| \bar x_{k+1} - x_k \|^2_{B(\pi^{-1}\sigma^{-1})}, \label{eq: erg_y_bd} \\
\frac{\underline p}{2} \Ec{k}{\| x_{k+1} - x_k \|^2_{\tau^{-1}P^{-1}}} &= \frac{\underline p}{2} \| \bar x_{k+1} - x_k\|^2_{\tau^{-1}} \leq \frac{\Delta_0}{C_{\tau, \tilde V}}.\label{eq: erg_x_bd}
\end{align}
Moreover, it holds that
\begin{align}
\frac{1}{2\underline p} \Ec{k}{\|\sigma \pi A(x_k - x_{k+1}) \|^2_{\sigma^{-1}\pi^{-1}}} &=\Ec{k}{\frac{1}{2\underline p}  \sum_{j=1}^m \sigma_j \pi_j (A(x_k - x_{k+1}))_j^2} \notag \\
&= \Ec{k}{\frac{1}{2\underline p}  \sum_{j=1}^m \sigma_j \pi_j A_{j, i_{k+1}}^2(x_k^{i_{k+1}} - \bar x_{k+1}^{i_{k+1}})^2} \notag \\
&=\frac{1}{2\underline p}  \sum_{j=1}^m \sum_{i=1}^n p_i \sigma_j \pi_j A_{j, i}^2(x_k^{i} - \bar x_{k+1}^{i})^2 \notag\\
&= \frac{\underline p}{2} \| \bar x_{k+1} - x_k \|^2_{B(\pi^{-1}\sigma^{-1})}.\label{eq: erg_x_bd2}
\end{align}
Finally,
\begin{align}
\sum_{k=1}^K \mathbb{E}_k \Big[\| \tau P A^\top \pi^{-1}&(\breve y_{k+1} - \breve y_k) \|^2_{\tau^{-1}P^{-1}}\Big] \leq  \|\tau^{1/2}P^{1/2}A^\top \pi^{-1/2}\sigma^{1/2} \|^2 \sum_{k=1}^K \Ec{k}{\| \breve y_{k+1} - \breve y_k \|^2_{\sigma^{-1}\pi^{-1}}} \notag \\
&\leq \| \tau^{1/2}P^{1/2}A^\top \pi^{-1/2} \sigma^{1/2} \|^2 \sum_{k=1}^K \left( 2\| \bar y_{k+1} - y_k \|^2_{\sigma^{-1}} + 2\| \bar x_{k+1} - x_k \|^2_{B(\pi^{-1}\sigma^{-1})} \right), \label{eq: erg_y_bd2}
\end{align}
where we use~\Cref{lem: breve} for the last inequality.

By,~\cref{eq: erg_x_bd,eq: erg_x_bd2,eq: erg_y_bd,eq: erg_y_bd2}, we denote, in~\eqref{eq: breg_tow_last}, we denote
\begin{align}
&+ \sum_{k=1}^K \frac{\underline p}{2} \E{\| x_k - x_{k+1} \|^2_{\tau^{-1}P^{-1}} + \| y_k -y_{k+1} \|^2_{\sigma^{-1}\pi^{-1}}} +\sum_{k=1}^K \frac{1}{2\underline p} \E{\|\sigma \pi A(x_k - x_{k+1}) \|^2_{\sigma^{-1}\pi^{-1}}} \notag \\
&+ \sum_{k=1}^K \frac{\underline p}{2}\E{\|\tau P A^\top\pi^{-1}(\breve y_k - \breve y_{k+1})\|^2_{\tau^{-1}P^{-1}}} \leq C_{g, 2},\label{eq: cg2}
\end{align}
where the exact expression for $C_{g,2}$ is given in the statement of~\Cref{th:erg} in the appendix.

Then, on~\eqref{eq: breg_tow_last}, we use~\cref{eq: dp_last,eq: dd_last,eq: vt_final_bd,eq: erg_conv,eq: erg_x_bd,eq: erg_y_bd,eq: erg_x_bd2,eq: erg_y_bd2,eq: B_conv}, definition of primal-dual gap function in~\eqref{eq: def_pdgap}, and Jensen's inequality to conclude.
\end{proof}
\begin{reptheorem}{cor: erg} 
Let Assumption~\ref{asmp: asmp1} hold. We use the same parameters $\theta,\tau,\sigma$ and the definitions for $x_K^{av}$ and $y_K^{av}$ as Theorem~\ref{th:erg}. 
We consider two cases separately: \\
$\triangleright$ If $h(\cdot) = \delta_{\{ b \}}(\cdot)$, we obtain
\begin{align}
&\mathbb{E} \left[ f(x_K^{av}) + g(x_K^{av}) - f(x_\star) - g(x_\star)\right] \leq \frac{C_o}{\underline pK}. \notag\\
&\mathbb{E}\left[ \|Ax_K^{av}-b \|\right] \leq \frac{C_f}{\underline pK}.\notag
\end{align}
$\triangleright$ If $h$ is $L_h$-Lipschitz continuous, we obtain
\begin{equation}
\mathbb{E}\Big[ f(x_K^{av}) + g(x_K^{av}) + h(Ax_K^{av})
- f(x_\star) - g(x_\star) -h(Ax_\star)\Big] \leq \frac{C_l}{\underline pK},\notag
\end{equation}
where $C_f = 2c_2\sqrt{\|y_\star - y_0 \|_{\sigma^{-1}\pi^{-1}}^2 + \frac{C_s}{c_2} + \frac{2c_1}{c_2}}+2c_2\|y_\star - y_0 \|_{\sigma^{-1}\pi^{-1}}$, \\
$C_o = C_s + \| y_\star \|_{\sigma^{-1}\pi^{-1}} C_f + c_1 \| x_0 - x_\star \|^2_{\tau^{-1}P^{-1}} + c_2\|y_\star - y_0 \|^2_{\sigma^{-1}\pi^{-1}}$,\\
$C_l = C_s + c_1 \| x_\star-x_0\|^2_{\tau^{-1}P^{-1}} + 4 c_2 L_h^2$,\\
$c_1 = \frac{3\underline p}{2} + \underline p \| (2P -\underline p)^{1/2}\|\|\pi^{-1} - I \|$,\\
$c_2 = \frac{3\underline p}{2} + (1-\underline p)\|(2P-\underline p)^{1/2} \|$,
$C_s =  C_{g, 2}+ C_{g, 5}+C_{g,6}$, with $C_{g, 2}$ as defined in Theorem~\ref{th:erg} and $C_{g,5}, C_{g, 6}$ are defined in the proof in~\eqref{eq: cg3_def},~\eqref{eq: cg4_def}.
\end{reptheorem}
\begin{proof}
First, we will use~\Cref{lem: random_proc_lemma} on the result of~\Cref{lem: lem_erg3}, similar to~\eqref{eq: breg_tow_last}. The difference is that we process the terms $(1-\underline p)\left(D_p(x_k; z) - D_p(x_{k+1}; z)\right) + \underline p \left( D_d^{\pi^{-1}-I}(\breve y_k; z) - D_d^{\pi^{-1}-I}(\breve y_{k+1}; z) \right)$ with small differences. In particular,
\begin{align}
\sum_{k=0}^{K-1} (1-\underline p)\left(D_p(x_k; z) - D_p(x_{k+1}; z)\right) &=\sum_{k=0}^{K-1} (1-\underline p)\left(f(x_k) + g(x_k) - f(x_{k+1}) - g(x_{k+1}) + \langle A^\top y, x_k - x_{k+1} \rangle \right)\notag \\
&= (1-\underline p)\left( f(x_0) + g(x_0) - f(x_{K}) - g(x_{K}) + \langle A^\top y, x_0 - x_{K}\rangle \right).\notag
\end{align}
For the final term, we estimate using the step size rule~\eqref{eq: ss_choice}
\begin{align}
\langle A^\top y, x_0 - x_{K} \rangle &= \sum_{i=1}^n \sum_{j=1}^m A_{j, i} y^j (x_0^i - x_{K}^i) \leq \sum_{i=1}^n \sqrt{\sum_{j=1}^m A_{j, i}^2 (x_0^i - x_{K}^i)^2 \sigma_j \pi_j}\sqrt{\sum_{j=1}^m (y^j)^2 \sigma_j^{-1}\pi^{-1}_j} \notag \\
&\leq \sum_{i=1}^n \sqrt{\underline p(2p_i - \underline p) (x_0^i - x_{K}^i)^2 \tau^{-1}_i p_i^{-1}}\sqrt{\sum_{j=1}^m (y^j)^2\sigma_j^{-1}\pi_j^{-1}}\notag \\
&\leq \sum_{i=1}^n \left( \frac{1}{2} \sqrt{2p_i - \underline p} (x_0^i - x_{K}^i)^2 \tau^{-1}_i p_i^{-1} + \frac{\underline p\sqrt{2p_i - \underline p}}{2} \| y \|^2_{\sigma^{-1}\pi^{-1}}\right) \notag \\
&\leq \frac{\| (2P - \underline p)^{1/2} \|}{2} \| x_0 - x_{K} \|^2_{\tau^{-1}P^{-1}} + \frac{\| (2P - \underline p)^{1/2} \|}{2} \| y \|^2_{\sigma^{-1} \pi^{-1}} \notag \\
&\leq \frac{\| (2P - \underline p)^{1/2} \|}{2} \| x_0 - x_{K} \|^2_{\tau^{-1}P^{-1}} + \| (2P - \underline p)^{1/2} \|\left( \| y - y_0 \|^2_{\sigma^{-1} \pi^{-1}} + \| y_0 \|_{\sigma^{-1}\pi^{-1}}^2\right).\notag
\end{align}
We estimate similarly to obtain
\begin{align}
\sum_{k=0}^{K-1} \underline p\left(D_d^{\pi^{-1}-I}(\breve y_k; z) - D_d^{\pi^{-1}-I}(\breve y_{k+1}; z)\right) = \underline p\left( h^\ast_{\pi^{-1} - I}(\breve y_0) - h^\ast_{\pi^{-1}-I}(\breve y_{K}) - \langle Ax,(\pi^{-1}-I) (\breve y_0 - \breve y_{K}) \rangle \right),\notag
\end{align}
and
\begin{align}
-\langle Ax, (\pi^{-1}-I)&(\breve y_0 - \breve y_{K}) \rangle \leq \frac{1}{2} \| (2P - \underline p)^{1/2} \| \| \pi^{-1} - I \| \left( \| x \|^2_{\tau^{-1}P^{-1}} + \| \breve y_0 - \breve y_{K} \|^2_{\sigma^{-1}\pi^{-1}}\right) \notag \\
&\leq \| (2P - \underline p)^{1/2} \| \| \pi^{-1} - I \| \left(  \| x-x_0 \|^2_{\tau^{-1}P^{-1}} + \| x_0 \|^2_{\tau^{-1}P^{-1}}+ \frac{1}{2}\| \breve y_0 - \breve y_{K} \|^2_{\sigma^{-1}\pi^{-1}}\right)\notag
\end{align}
With these differences, instead of~\eqref{eq: breg_tow_last}, we get
\begin{align}
\mathbb{E}\bigg[\sup_{z\in \mathcal{Z}}\sum_{k=1}^K &\underline p\left(D_p(x_{k}) +  D_d(\breve y_{k}) \right) - c_1 \| x_0 - x \|^2_{\tau^{-1}P^{-1}} - c_2 \| y_0 - y \|^2_{\sigma^{-1}\pi^{-1}}\bigg] \leq \underline p\left( h^\ast_{\pi^{-1}-I}(\breve y_0) - h^\ast_{\pi^{-1}-I}(\breve y_{K})\right) \notag \\
&+ (1-\underline p)\left( f(x_0) + g(x_0) - f(x_{K}) - g(x_{K}) \right) + \sum_{k=1}^K \frac{\underline p}{2} \E{\| x_k - x_{k+1} \|^2_{\tau^{-1}P^{-1}} + \| y_k -y_{k+1} \|^2_{\sigma^{-1}\pi^{-1}}}\notag \\
& +\sum_{k=1}^K \frac{1}{2\underline p} \E{\|\sigma \pi A(x_k - x_{k+1}) \|^2_{\sigma^{-1}\pi^{-1}}} + \sum_{k=1}^K \frac{\underline p}{2}\E{\|\tau P A^\top\pi^{-1}(\breve y_k - \breve y_{k+1})\|^2_{\tau^{-1}P^{-1}}},\notag\\
&+\| (2P - \underline p)^{1/2} \|\left( \frac{1}{2} \| x_0 - x_{K} \|^2_{\tau^{-1}P^{-1}} + \| y_0 \|^2_{\sigma^{-1}\pi^{-1}} \right)\notag \\
&+\| (2P - \underline p)^{1/2} \| \| \pi^{-1} - I \| \left(\| x_0 \|^2_{\tau^{-1}P^{-1}}+ \frac{1}{2}\| \breve y_0 - \breve y_{K} \|^2_{\sigma^{-1}\pi^{-1}}\right),
\end{align}
where $c_1 = \frac{3\underline p}{2} + \underline p \| (2P -\underline p)^{1/2}\|\| \pi^{-1} - I \|$, $c_2 = \frac{3\underline p}{2} + (1-\underline p)\|(2P-\underline p)^{1/2} \|$.

We divide both sides by $\underline p$ and use Jensen's inequality to obtain the smoothed gap function~\cite{tran2018smooth}
\begin{align}
\mathcal{G}_{\frac{2c_1}{\underline pK}, \frac{2c_2}{\underline pK}}(x_K^{av}, y_K^{av}; x_0, y_0) &= \sup_{z=(x, y)\in \mathcal{Z}} D_p(x_K^{av}; z) + D_d(y_K^{av}; z) - \frac{c_1}{\underline pK} \| x-x_0 \|^2 - \frac{c_2}{ \underline pK} \|y-y_0\|^2.\notag
\end{align}
Then, we have, as in the proof of~\Cref{th:erg} that (see~\cref{eq: cg2,eq: breg_tow_last})
\begin{align}
\underline p K\E{\mathcal{G}_{\frac{2c_1}{\underline pK}, \frac{2c_2}{\underline pK}}(x_K^{av}, y_K^{av}; x_0, y_0)} &\leq C_{g, 2} + \underline p\left( h^\ast_{\pi^{-1}-I}(\breve y_0) - h^\ast_{\pi^{-1}-I}(\breve y_{K})\right) \notag \\
&+ (1-\underline p)\left( f(x_0) + g(x_0) - f(x_{K}) - g(x_{K}) \right) \notag \\
&+(1-\underline p)\| (2P - \underline p)^{1/2}\| \left( \frac{1}{2} \| x_0 - x_{K} \|^2_{\tau^{-1}P^{-1}} + \| y_0 \|^2_{\sigma^{-1}\pi^{-1}} \right)\notag \\
&+\underline p \| (2P - \underline p)^{1/2} \| \| \pi^{-1} - I \| \left(\| x_0 \|^2_{\tau^{-1}P^{-1}}+ \frac{1}{2}\| \breve y_0 - \breve y_{K} \|^2_{\sigma^{-1}\pi^{-1}}\right)\label{eq: sm_gap_last_bd}
\end{align}
Then, we use the optimality conditions and convexity,
\begin{align}
\mathbb{E} \left[ f(x_{K}) + g(x_{K})\right] &\geq \mathbb{E}\left[ f(x_\star) + g(x_\star) - \langle A^\top y_\star, x_{K} - x_\star \rangle\right]\notag\\ &\geq \mathbb{E}\left[f(x_\star) + g(x_\star) - \| A^\top y_\star \|^2_{\tau P}\|x_{K} - x_\star \|_{\tau^{-1}P^{-1}}\right] \notag \\
&\geq  f(x_\star) + g(x_\star) - \| A^\top y_\star \|_{\tau P}\sqrt{\frac{\Delta_0}{\underline p}}.\notag
\end{align}
Similarly,
\begin{align}
\mathbb{E} \left[ h^\ast_{\pi^{-1}-I}(\breve{y}_{K})\right] &=\mathbb{E} \left[ \sum_{j=1}^m (\pi_j^{-1} - 1) h^\ast_j (\breve{y}_{K}^j) \geq \sum_{j=1}^m (\pi_j^{-1} - 1) \left(h^\ast_j(y_\star^j) + \langle (Ax_\star)_j, \breve{y}_{K}^j - y_\star^j\rangle \right)\right] \notag \\
&\geq\mathbb{E} \left[\sum_{j=1}^m (\pi_j^{-1} - 1) h^\ast_j(y_\star^j) - \frac12\|Ax_\star \|^2_{\sigma \pi^{-1}} - \frac12 \| \breve y_{K} - y_\star \|^2_{\sigma^{-1}\pi^{-1}} \right] \notag \\
&\geq\mathbb{E} \left[\sum_{j=1}^m (\pi_j^{-1} - 1) h^\ast_j(y_\star^j) - \frac12\|Ax_\star \|^2_{\sigma \pi^{-1}} - \|y_{K} - y_\star \|^2_{\sigma^{-1}\pi^{-1}} - \sum_{k=1}^K \| \bar x_{k+1} - x_k \|^2_{B(\pi^{-1}\sigma^{-1})} \right] \notag \\
&\geq \sum_{j=1}^m (\pi_j^{-1} - 1) h^\ast_j(y_\star^j) - \frac12\|Ax_\star \|^2_{\sigma \pi^{-1}} - \frac{\Delta_0}{\underline p} - \frac{\| 2P - \underline p \|\Delta_0}{\underline p^2 C_{\tau, \tilde V}} \notag
\end{align}
So we denote
\begin{align}
C_{g, 5} &= (1-\underline p)\left( f(x_0) + g(x_0) - f(x_\star) - g(x_\star) + \| A^\top y_\star \|_{\tau P}\sqrt{\frac{\Delta_0}{\underline p}}  \right)  \notag \\
&+\underline p \left(h^\ast_{\pi^{-1}-I}(y_0) - \sum_{j=1}^m (\pi_j^{-1} - 1) h^\ast_j(y_\star^j) + \frac12\|Ax_\star \|^2_{\sigma \pi^{-1}} + \frac{\Delta_0}{\underline p} + \frac{\Delta_0\| 2P-\underline p\|}{\underline p^2 C_{\tau, \tilde V}}\right).\label{eq: cg3_def}
\end{align}
Next, we bound the last two terms in~\eqref{eq: sm_gap_last_bd} using~\eqref{eq: main_rec_ineq} and we denote the bound as $C_{g, 6}$
\begin{multline}
C_{g, 6} = (1-\underline p)\| (2P - \underline p)^{1/2}\| \left( \frac{4\Delta_0}{\underline p} +2\| y_\star \|^2_{\sigma^{-1}\pi^{-1}}\right)\\
+ \underline p\| (2P - \underline p)^{1/2} \| \| \pi^{-1} - I \| \left( \frac{4\Delta_0}{\underline p} + 2\| x_\star \|^2_{\tau^{-1}P^{-1}} \right),\label{eq: cg4_def}
\end{multline}
so that RHS of~\eqref{eq: sm_gap_last_bd} is $C_{g, 2} + C_{g,5} + C_{g, 6}$.

We consider two cases:

$\bullet$ If $h$ is $L_h$ Lipschitz continuous in norm $\| \cdot \|_{\sigma\pi}$, then $\| y-y_0\|^2_{\sigma^{-1}\pi^{-1}} \leq 4L_h^2$.
Then, we argue as in~\citep[Theorem 11]{fercoq2019coordinate} to get
\begin{multline}
\mathbb{E} \left[ f(x_K^{av}) + g(x_K^{av}) + h(Ax_K^{av}) - f(x_\star) - g(x_\star) - h(Ax_\star) \right] \leq \mathbb{E}\left[\mathcal{G}_{\frac{2c_1}{\underline pK}, \frac{2c_2}{\underline pK}}(x_K^{av}, y_K^{av}; x_0, y_0)\right] \\
+ \frac{c_1}{\underline pK} \| x_\star-x_0\|^2_{\tau^{-1}P^{-1}} + \frac{4 c_2}{\underline pK}L_h^2. \notag
\end{multline}
$\bullet$ If $h(\cdot) = \delta_{b}(\cdot)$, we use~\citep[Lemma 1]{tran2018smooth} to obtain
\begin{align}
&\mathbb{E}\left[ f(x_K^{av}) + g(x_K^{av}) - f(x_\star) - g(x_\star) \right] \leq \mathbb{E}\left[\mathcal{G}_{\frac{2c_1}{\underline pK}, \frac{2c_2}{\underline pK}}(x_K^{av}, y_K^{av}; x_0, y_0)\right] + \frac{c_1}{\underline pK}\| x_0 - x_\star \|^2_{\tau^{-1}P^{-1}} \notag \\
&\qquad\qquad\qquad\qquad\qquad\qquad\qquad\qquad\qquad\qquad\qquad\qquad+ \frac{c_2}{\underline pK}\|y_\star - y_0 \|^2_{\sigma^{-1}\pi^{-1}}+\mathbb{E}\left[ \| y_\star \|_{\sigma^{-1}\pi^{-1}} \| Ax_K^{av}-b\|_{\sigma\pi} \right]\notag \\
&\mathbb{E}\left[\| Ax-b \|_{\sigma\pi}\right] \leq \frac{2c_2}{\underline pK} \| y_\star - y_0 \|_{\sigma^{-1}\pi^{-1}} \notag \\
&~~~~~~~~~~~~~~~~~~~~~~~~~~~~~+ \frac{2c_2}{\underline pK}\sqrt{ \| y_\star - y_0 \|^2_{\sigma^{-1}\pi^{-1}}+\frac{\underline pK}{c_2}\left(\mathbb{E}\left[\mathcal{G}_{\frac{2c_1}{\underline pK}, \frac{2c_2}{\underline pK}}(x_K^{av}, y_K^{av}; x_0, y_0) \right]+ \frac{2c_1}{\underline pK} \| x_0 - x_\star \|^2_{\tau^{-1}P^{-1}}\right)}.\notag
\end{align}
We plug in the bound of $\mathbb{E} \left[ \mathcal{G}_{\frac{2c_1}{\underline p K}, \frac{2c_2}{\underline pK}}(x_K^{av}, y_K^{av}; x_0, y_0)\right]$ to obtain the final results.
\end{proof}
\textbf{Simplification of the constants.} As mentioned before~\Cref{th:erg}, we now give the inequalities we use to obtain the bounds we have in the main text for~\Cref{th:erg} and~\Cref{cor: erg} compared to the ones we have in the appendix.
It is easy to see by using coarse inequalities, we first $p_i\underline p^{-1}\leq\underline p^{-1}$, second, $2p_i - \underline p \leq 2$, third, $\pi_j^{-1}-1\leq \underline p^{-1}$ as $\pi_j \geq \underline p$.
Finally, by the definition of $\tau_i$ in~\eqref{eq: ss_choice}, we can derive $\| \tau^{1/2}P^{1/2}A^\top\pi^{-1/2}\sigma^{1/2}\|^2 \leq 2\underline p^{-1}$.
By using these constants in the bounds of~\Cref{th:erg} and~\Cref{cor: erg} in the appendix, we arrive at the bounds given for these theorems in the main text.
\end{document}